\newcommand{\id}{\mathcal{I}}
\newcommand{\lhol}{\mathcal{SH}_L}
\newcommand{\rhol}{\mathcal{SH}_R}
\newcommand{\intrin}{\mathcal{N}}
\newtheorem{Pa}{Paper}[section]
\newtheorem{theorem}[Pa]{{\bf Theorem}}
\newtheorem{lemma}[Pa]{{\bf Lemma}}
\newtheorem{definition}[Pa]{{\bf Definition}}
\newtheorem{Pn}[Pa]{{\bf Proposition}}
\newtheorem{Ex}[Pa]{{\bf Example}}
\theoremstyle{definition}
\newtheorem{remark}[Pa]{{\bf Remark}}
\def\C{\mathbb C}
\def\hh{\mathbb{H}}
\newcommand{\rr}{\mathbb{R}}
\renewcommand{\S}{\mathbb{S}}
\newcommand{\boundOP}{\mathcal{B}}
\newcommand{\closOP}{\mathcal{K}}
\renewcommand{\Re}{\mathrm{Re}}
\title[
The $H^\infty$ Functional Calculus ]
{The $H^\infty$ functional calculus based on the $S$-spectrum for quaternionic operators and for $n$-tuples of noncommuting operators} \oddsidemargin
\def\H{\mathbb H}
\def\R{\mathbb R}
\def\N{\mathbb N}
\def\C{\mathbb C}
\def\(s){\mathscr S(\R\times\R)}
\author[D. Alpay]{Daniel Alpay}
\address{(DA)  Department of Mathematics\\
Ben Gurion University of the Negev\\
Beer Sheva 84105 Israel}
\email{dany@math.bgu.ac.il}
\author[F. Colombo]{Fabrizio Colombo}
\address{(FC)
Politecnico di Milano\\Dipartimento di Matematica\\Via E. Bonardi, 9\\20133
Milano, Italy}
\email{fabrizio.colombo@polimi.it}
\author[T. Qian]{Tao Qian}
\address{(TQ)
     Department of Mathematics\\
      Faculty of Science and Technology
      \\ University of Macau, Taipa
      \\ Macao, China
}
\email{fsttq@umac.mo}
\author[I. Sabadini]{Irene Sabadini}
\address{(IS)
Politecnico di Milano\\Dipartimento di Matematica\\Via E. Bonardi, 9\\20133
Milano, Italy}
\email{irene.sabadini@polimi.it}
\thanks{
The authors thank
Macao Science and Technology Fund FDCT/098/2012/A3
and
Macao Science and Technology Fund FDCT/099/2014/A2.
}
\begin{document}
\maketitle

\begin{abstract}
In this paper we extend the $H^\infty$ functional calculus to quaternionic operators and to $n$-tuples of noncommuting operators using the theory of slice hyperholomorphic functions and the associated functional calculus, called $S$-functional calculus.
The $S$-functional calculus has two versions one for quaternionic-valued functions and one for Clifford algebra-valued functions
and can be considered the Riesz-Dunford functional calculus based on slice hyperholomorphicity because it shares with it the most important properties.
 The $S$-functional calculus is based on the notion of $S$-spectrum which, in the case of quaternionic normal operators on a Hilbert space, is also the notion of spectrum that appears in the quaternionic spectral theorem.
 The main purpose of this paper is to construct the $H^\infty$ functional calculus based on the notion of $S$-spectrum for both quaternionic operators and for $n$-tuples of noncommuting operators. We remark that the $H^\infty$ functional calculus for $(n+1)$-tuples of operators applies, in particular, to the Dirac operator.
\end{abstract}
\vskip 1cm
\par\noindent
 AMS Classification: 47A10, 47A60, 30G35.
\par\noindent
\noindent {\em Key words}: $H^\infty$ functional calculus, Quaternionic operators,  $n$-tuples of noncommuting operators,
 slice hyperholomorphic functions, S-functional calculus, S-spectrum, S-resolvent operators, quadratic estimates.
\vskip 1cm
\section{Introduction}

The $H^\infty$-functional calculus is an extension of the Riesz-Dunford functional calculus for bounded operators, see \cite{ds}, to unbounded sectorial operators and  it has been introduced by A. McIntosh in \cite{McI1}, see also \cite{ADMcI}.
This calculus is connected  with pseudo-differential operators, with the Kato's square root problem,
 and with the study of evolution equations and, in particular,
 the characterization of maximal regularity and of the fractional powers of differential operators.
For an overview and more problems associated with this functional calculus see the paper \cite{LW},
the book \cite{Haase} and the references therein.
\\
\\
One of the main motivations to study quaternionic operators is the fact that
 they are important in the formulation of quantum mechanics. In fact, it was proved by
G. Birkhoff and  J. von Neumann \cite {BvN}, that there are essentially
 two possible ways to formulate quantum mechanics: using complex numbers or quaternions, see \cite{adler}.
\\
\\
The main purpose of this paper is to construct the $H^\infty$ functional calculus based on the notion of $S$-spectrum for quaternionic operators and for $n$-tuples of noncommuting operators.
 To do this, we replace the Riesz-Dunford functional calculus by the
 $S$-functional calculus which is the quaternionic version of the
 Riesz-Dunford functional calculus, see \cite{acgs, JGA, CLOSED,MR2752913}. The $S$-functional calculus is based on the notion of $S$-spectrum which, in the case of quaternionic normal operators on a Hilbert space, is also the notion of spectrum that appears in the quaternionic spectral theorem, see
 \cite{ack, acks2, spectcomp}.
 The $S$-functional calculus is defined for slice hyperholomorphic functions:
for the quaternionic version of the function theory see the books \cite{ACSBOOK,MR2752913,GSSb}
and for the Clifford algebra version see \cite{MR2752913}.
\\
\\
We begin by recalling
some results and definitions from the complex setting, following the paper \cite{McI1} and the book
 \cite{Haase}.
Let  $A$ be a linear operator on a complex Banach space $X$, with dense domain $\mathcal{D}(A)$ and dense range ${\rm Ran}(A)$.
Let $\omega\in [0,\pi)$. We say that $A$ is of type $\omega$ if its spectrum $\sigma(A)$ is contained in the sector
$$
S_\omega=\{z\in \mathbb{C}\ :\ |\arg(z)|\leq \omega \} \cup \{0\}
$$
and if there exists a positive constant $c_\mu$, for $\mu >\omega$, such that
$$
\|(A-z I)^{-1}\|\leq \frac{c_\mu}{|z|},
$$
 for all $z$ such that $|\arg (z)|\geq \mu$.
\\
For this class of operators, called sectorial operators,
 it is possible to construct a functional calculus using bounded holomorphic functions $g$ for which there exists two positive constants $\alpha$ and $c$ such that
\begin{equation}\label{gicres}
 |g(z)|\leq \frac{c|z|^\alpha}{1+|z|^{2\alpha}}\ \ \ {\rm for \ all }\ \ z\in S^0_\omega,
 \end{equation}
where $S^0_\omega$ is the interior of $S_\omega$.
The strategy is based on the Cauchy formula for holomorphic functions in which we replace the Cauchy kernel by the resolvent operator $R(\lambda, A)$. In the case $A$ is a bounded linear operator then the spectrum of $A$ is a bounded and nonempty set in the complex plane,
 so using a suitable contour $\gamma$, that surrounds the spectrum of $A$ it is possible to define the bounded linear operator
\begin{equation}\label{FCA1}
g(A)=\frac{1}{2\pi i}\int_\gamma (zI-A)^{-1}g(z)dz.
\end{equation}
The  integral (\ref{FCA1}) turns out to be convergent
 for sectorial operators, if we assume that estimate (\ref{gicres}) holds for the bounded holomorphic function $g$.
  We point out that the definition is well posed, because the integral does not depend on the contour $\gamma$ when $\gamma$ does not intersect the spectrum of $A$.
\\
Now we extend the above  calculus so that we can define
operators such as  $A^\lambda$ with $\lambda\in \mathbb{C}$ or
$\lambda^{2-\beta}A^\beta R(\lambda,A)^2$ with $\beta\in (0,2)$.
Using the functional calculus defined in (\ref{FCA1}) and the rational functional calculus
\begin{equation}\label{FCArational}
\varphi(A)=\Big(A(I+A^2)^{-1}\Big)^{k+1},\ \ k\in \mathbb{N},
\end{equation}
where
$$
\varphi(z)=\Big(\frac{z}{1+z^2}\Big)^{k+1},\ \ k\in \mathbb{N},
$$
we can define a more general functional calculus for sectorial operators given by
\begin{equation}\label{FCA2}
f(A)=(\varphi(A))^{-1}(f\varphi)(A)
\end{equation}
where $f$ is a holomorphic function on $S^0_\omega$ which satisfies bounds of the type
$$
|f(z)|\leq c(|z|^k+|z|^{-k}), \ \ {\rm for }\ \ c>0, \ k>0.
$$
The calculus defined in (\ref{FCA2}) is called the $H^\infty$ functional calculus and has been introduced in \cite{McI1}.
Note that, strictly speaking, we should write $\varphi_k$ instead of $\varphi$, but we omit the subscript for the sake of simplicity.
The definition above is well posed since  the operator $f(A)$ does not depend on the suitable rational function $\varphi$ that we choose.

Moreover, observe that the operator $(f\varphi)(A)$ can be defined using the functional calculus
(\ref{FCA1}) for the function $f\varphi$, where $\varphi(z)=(z(1+z^2)^{-1})^{k+1}$.
The operator $(f\varphi)(A)$ is bounded but $(\varphi(A))^{-1}$ is closed, so
the operator $f(A)$ defined in (\ref{FCA2}) is not necessarily bounded.
This calculus is very important because of Theorem \ref{convth} that we state in the sequel.
\\
\\
To explain how we can extend the $H^\infty$ functional calculus to the quaternionic setting we must make precise the notions of spectrum, of resolvent operator, of holomorphicity.
With the standard imaginary units $e_1$, $e_2$ obeying $e_1e_2+e_2e_1=0$, $e_1^2=e_2^2=-1$ and $e_3:=e_1e_2$, the algebra of quaternions $\mathbb{H}$ consists of elements of the form $q=x_0+x_1e_1+x_2e_2+x_3e_3$, for $x_\ell\in\mathbb{R}$, for $\ell=0,\ldots ,3$.
The real part, imaginary part and the square of the modulus of a quaternion are defined as
${\rm Re}\  q=x_0$, ${\rm Im}\  q= x_1e_1 + x_2e_2 + x_3e_3$,
$|q|^2=x_0^2+x_1^2+x_2^2+x_3^2$, respectively.
The conjugate $\bar q$ of the quaternion $q$ is defined by $\bar q={\rm Re }\ q-{\rm Im }\ q=x_0-x_1e_1 - x_2e_2 - x_3e_3$
 and it satisfies
$$
|q|^2=q\bar q=\bar q q.
$$
By $\mathbb{S}$ we denote the sphere of purely imaginary quaternions whose square is $-1$.
Every element $i\in \mathbb{S}$ works as an imaginary unit and with each $i\in \mathbb{S}$ we associate the complex plane
$\mathbb{C}_i=\{u+iv\ :\ u,v\in \mathbb{R}\}$ so that we have that $\mathbb{H}$ can be seen as the union of the complex planes $\mathbb{C}_i$ when $i$ varies in $\mathbb{S}$.
In this paper for the quaternionic setting, and in the Clifford algebra setting we use the notion
  of slice hyperholomorphicity, see Section \ref{SEc2}.
Regarding operators  we replace the classical notion of spectrum of an operator by the $S$-spectrum of a quaternionic operator
(resp. the $S$-spectrum of the $n$-tuples of operators) and the resolvent operator by the two $S$-resolvent operators which are slice hyperholomorphic functions operator--valued, see the book \cite{MR2752913}.
\\
 Precisely, we define the $S$-spectrum of the bounded quaternionic linear operator $T$ as
 $$
 \sigma_S(T)=\{s\in \mathbb{H} \ : \   T^2-2{\rm Re}(s)T+|s|^2\mathcal{I} \  \ {\rm is \ not\ invertible\ in} \ \mathcal{B}(V) \}
 $$
 where $\mathcal{B}(V)$ denotes the space of all bounded linear operators on a two-sided quaternionic Banach space $V$.
  In the case of bounded quaternionic linear operators the $S$-spectrum is a nonempty and compact set.
 The $S$-resolvent set $\rho_S(T)$ is defined by
$$
\rho_S(T)=\mathbb{H}\setminus\sigma_S(T).
$$
\noindent
For $s\in \rho_S(T)$ we define the left $S$-resolvent operator as
\begin{equation}\label{LEFTROPREW}
S_L^{-1}(s,T):=-(T^2-2s_0 T+|s|^2\mathcal{I})^{-1}(T-\overline{s}\mathcal{I}),
\end{equation}
and the right $S$-resolvent operator as
\begin{equation}\label{RAITPREW}
S_R^{-1}(s,T):=-(T-\overline{s}\mathcal{I})(T^2-2s_0 T+|s|^2\mathcal{I})^{-1}.
\end{equation}
Let $U\subset \mathbb{H}$ be a suitable domain that contains the $S$-spectrum of $T$.
We define the quaternionic functional calculus for left slice hyperholomorphic functions $f:U \to \mathbb{H}$ as
\begin{equation}\label{quatinteg311def}
f(T)={{1}\over{2\pi }} \int_{\partial (U\cap \mathbb{C}_i)} S_L^{-1} (s,T)\  ds_i \ f(s),
\end{equation}
where $ds_i=-i ds$;
for right slice hyperholomorphic functions, we define \begin{equation}\label{quatinteg311rightdef}
f(T)={{1}\over{2\pi }} \int_{\partial (U\cap \mathbb{C}_i)} \  f(s)\ ds_i \ S_R^{-1} (s,T).
\end{equation}
These definitions are well posed since the integrals depend neither on the open set $U$ nor on the complex plane
$\mathbb{C}_i$, for $i\in \mathbb{S}$.
\\
\\
In the following we will consider just right linear quaternionic operators (similar considerations can be done when we consider left linear operators).
In order to extend the $S$-functional calculus to closed operators it is necessary that the two $S$-resolvent operators are defined
not only on the domain of $T$ but they must be defined for all elements in the two-sided quaternionic Banach space $V$.
So for closed operators we define the $S$-resolvent set as
\[
\rho_S(T):= \{ s\in\H \ :\  (T^2 - 2\Re(s)T + |s|^2\id)^{-1}\in\boundOP(V)\},
\]
which we always suppose it to be nonempty,
and the $S$-spectrum of $T$ as
\[\sigma_S(T):=\H\setminus\rho_S(T).\]
For $s\in\rho_S(T)$ the  left $S$-resolvent operator is defined as
\begin{equation}
S_L^{-1}(s,T):= Q_s(T)\overline{s} -TQ_s(T)
\end{equation}
while the right $S$-resolvent operator remains as in (\ref{RAITPREW}), we simply rewrite it in terms of $Q_s(T)$ as:
\begin{equation}
S_R^{-1}(s,T):=-(T-\id \overline{s})Q_s(T)
\end{equation}
where
$
Q_s(T):=(T^2-2\Re(s)T +|s|^2\id )^{-1}
$
is called the pseudo-resolvent operator of $T$.

The quaternionic rational functions that we will use  are {\em intrinsic} rational slice hyperholomorphic functions.
 This class of functions is of fundamental importance and allows the definition
of  a rational functional calculus that includes the operators:
$$
\psi(T)= \Big(T(\mathcal{I}+T^2)^{-1}\Big)^{k+1}, \ \ k\in \mathbb{N}.
$$
Note that, also in this case, we write $\psi$ instead of $\psi_k$.
We extend the $S$-functional calculus to sectorial operators in the quaternionic setting, and then we use the classical regularizing procedure to define
 the extended functional calculus for slice hyperholomorphic functions $f$ with suitable growth conditions
$$
f(T):=(\psi(T))^{-1}(\psi f)(T),
$$
where the operator $(\psi f)(T)$ is defined using the $S$-functional calculus for sectorial operators,
and $\psi(T)$ is defined by the rational quaternionic functional calculus.
The definition does not depend on  the suitable rational function $\psi$ that we choose.
\\
\\
Examples of operators to which this calculus applies are:
\begin{itemize}
 \item[(i)]The Cauchy-Fueter operator (and its variations)
 $$
 \frac{\partial }{\partial x_0}+\sum_{j=1}^3  e_j\frac{\partial }{\partial x_j}.
 $$
\item[(ii)]
 Quaternionic operators appearing in quaternionic quantum mechanics such as the Hamiltonian, see \cite{adler}.
\item[(iii)]
 The global operator (see \cite{Global}) that annihilates slice hyperholomorphic  functions:
 \\
$$
|\underline{q}|^2\frac{\partial  }{\partial x_0}  \ +  \  \underline{q}  \  \sum_{j=1}^3  x_j\frac{\partial }{\partial x_j}, \ \ \ {\rm where} \ \ \ \underline{q}=x_1e_1 + x_2e_2 + x_3e_3.
$$

\end{itemize}
 We  recall that some
classical results on groups and semigroups of linear operators (see
\cite{EngelNagel, Hille, Kantorovitz, Lunardi})
have been extended
to the quaternionic setting in some recent papers.
In  \cite{MR2803786} it has been proved the quaternionic Hille-Yosida theorem,
 in \cite{perturbation} has been studied the problem of generation by perturbations of the quaternionic infinitesimal generator and in
 \cite{FUCGEN}  the natural functional calculus has been defined
 for the infinitesimal generator of quaternionic groups of operators.
For semigroups over real alternative *-algebras and generation theorems
 see \cite{GR}.
\\
\\
Regarding the case of  $(n+1)$-tuples of operators
we postpone the details in Section \ref{sec7}.
We only  mention that the $H^\infty$ functional calculus of $(n+1)$-tuples of operators applies to the Dirac operator.
We point out that the above formulas for the quaternionic functional calculus hold also for $(n+1)$-tuples of noncommuting operators.  Here we just recall the notion of $S$-spectrum.
By $V_n$ we denote the tensor product of the real Banach space $V$ with the real Clifford algebra $\mathbb{R}_n$ we  consider.
In the case of  $(n+1)$-tuples of bounded operators $(T_0,T_1,...,T_n)$ the $S$-spectrum is defined as
$$
\sigma_S(T)=\{s\in \mathbb{H} \ : \   T^2-2s_0T+|s|^2\mathcal{I} \  \ {\rm is \ not\ invertible\ in} \ \mathcal{B}(V_n) \}
$$
where the operators $T_\ell$  act on $V$ for $\ell=0,...,n$.
  The   paravector operator
 $$
 T=T_0+e_1T_1+...+e_nT_n
 $$
   represents the $(n+1)$-tuples of bounded operators $(T_0,T_1,...,T_n)$
 where $e_1,...,e_n$ are the units of the Clifford algebra $\mathbb{R}_n$, $s$ is the paravector $s=s_0+s_1e_1+...+s_ne_n$, with $s_\ell\in \mathbb{R}$ for $\ell=0,...,n$ and $|s|$ is the Euclidean norm of the paravector $s$.
 With these notations in mind also for the $(n+1)$-tuples of bounded operators $(T_0,T_1,...,T_n)$ we can define the $S$-resolvent operators and the $S$-functional calculus, see \cite{CAUCHY, JFA, MR2752913}.
\\
\\
Using a different approach, based on the classical theory of functions in the kernel of the Dirac operator, see \cite{bds,csss}, A. McIntosh with some of his coauthors developed the monogenic functional calculus, see \cite{jefferies,jmc,jmcpw,mcp,LIATAO}, and based on it he also developed the $H^\infty$ functional calculus for commuting operators see \cite{LIMC}.
\\
\\
The plan of the paper is as follows. In Section 2 we recall the main facts on slice hyperholomorphic functions.
In Section 3 we study the rational functions in the quaternionic setting and we define the rational functional calculus.
Section 4 contains the $S$-functional calculus for quaternionic linear operators of type $\omega$ and some properties.
Section 5 is devoted to the definition and some properties of the $H^\infty$ functional calculus for quaternionic operators and in Section 6 we consider quadratic estimates that guarantee the boundedness of the $H^\infty$ functional calculus.
In section 7 we adapt the results of the previous sections to the case of $(n+1)$-tuples of noncommuting operators.

\section{Preliminary results on slice hyperholomorphic functions}\label{SEc2}

In this section we recall some basic facts on the theory of slice hyperholomorphic functions in the quaternionic setting; for the proofs of the statements see the book \cite{MR2752913} and the references therein.
We denote by $\mathbb{S}$ the $2$-sphere of purely imaginary quaternions of modulus 1:
$$
\mathbb{S}=\{q=x_1e_1+x_2e_2+x_3e_3\in \mathbb{H}\ |\ q^2=-1\}
$$
and we recall that for any $i\in \mathbb{S}$ we can define a complex plane $\mathbb{C}_i$ whose elements are of the form $q=u+iv$ for $u$, $v\in \mathbb{R}$.
  Any quaternion $q$ belongs to a suitable  complex plane: if we set
 \[i_q := \begin{cases}\frac{\underline{q}}{|\underline{q}|},& \text{if  }\underline{q} \neq 0 \\ \text{any }i\in\S, \quad&\text{if }\underline{q}  = 0,\end{cases}\]
 then $q = u + i_q v$ with $u =\Re(q)$ and $v = |\underline{q}|$,
 so, it follows that,  the skew field of quaternions $\mathbb{H}$ can be seen as
$$
\mathbb{H}=\bigcup_{i\in \mathbb{S}}\C_i.
$$
\begin{definition}[Slice hyperholomorphic function]\label{defslhyp}
Let $U\subset\H$ be open and let $f: U \to \H$ be a real differentiable function. For any $i\in\S$, let
$
f_i := f|_{U\cap\C_i}
$
denote the restriction of $f$ to the plane $\C_i$. The function $f$ is called left slice hyperholomorphic if, for any $i\in\S$,
\begin{equation}\label{LHolEQ}
\frac12\left( \frac{\partial}{\partial u} f_i(q) + i \frac{\partial}{\partial v} f_i(q)\right) = 0 \quad \text{for all }q = u + iv\in U\cap\C_i\end{equation}
and right slice hyperholomorphic if, for any $i\in\S$,
\begin{equation}\label{RHolEQ}\frac12\left( \frac{\partial}{\partial u} f_i(q) + \frac{\partial}{\partial v} f_i(q)i\right) = 0 \quad \text{for all }q = u + i v\in U\cap\C_i.\end{equation}
A left (or right) slice  hyperholomorphic function that satisfies $f(U\cap\C_i)\subset\C_i$ for every $i\in\S$ is called intrinsic.
\\
We denote the set of all left  slice hyperholomorphic functions on $U$ by $\lhol(U)$, the set of all right slice hyperholomorphic functions on $U$ by $\rhol(U)$ and the set of all intrinsic functions by $\intrin(U)$.
\end{definition}
The importance of the class of intrinsic functions is due to the fact that the multiplication and composition with intrinsic functions preserve slice hyperholomorphicity. This is not true for arbitrary slice hyperholomorphic functions.
\begin{theorem}
Let $U$ be an open set in $\mathbb{H}$ and let $\lhol(U)$, $\rhol(U)$ and $\intrin(U)$ the spaces of slice hyperholomorphic functions defined above.
\begin{itemize}
\item
 If $f\in\intrin(U)$ and $g\in\lhol(U)$, then $fg\in\lhol(U)$.
\item
 If $f\in\rhol(U)$ and $g\in\intrin(U)$, then $fg\in\rhol(U)$.
\item
 If $g\in\intrin(U)$ and $f\in\lhol(g(U))$, then $f\circ g\in \lhol(U)$.
 \item
 If $g\in\intrin(U)$ and $f\in\rhol(g(U))$, then $f\circ g\in \rhol(U)$.
\end{itemize}
\end{theorem}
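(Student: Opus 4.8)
The plan is to verify each of the four assertions directly: fix an imaginary unit $i\in\S$, restrict everything to the slice $U\cap\C_i$, and exploit the one feature peculiar to intrinsic functions, namely that on $\C_i$ such a function is $\C_i$-valued, hence commutes with $i$ there, and, written as $h_1+ih_2$ with $h_1,h_2$ real-valued, has real and imaginary parts satisfying the ordinary Cauchy--Riemann system. A preliminary remark I use throughout is that an intrinsic function is \emph{at the same time} left and right slice hyperholomorphic: once $h_i$ is $\C_i$-valued, $i$ commutes with $h_i$, so the left condition $\partial_u h_i+i\,\partial_v h_i=0$ and the right condition $\partial_u h_i+(\partial_v h_i)i=0$ coincide.

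\emph{The product statements.} Fix $i\in\S$ (the factor $\tfrac12$ in \eqref{LHolEQ}--\eqref{RHolEQ} is immaterial and I drop it). For $f\in\intrin(U)$ and $g\in\lhol(U)$, the Leibniz rule on $U\cap\C_i$ gives
\[
\partial_u(f_ig_i)+i\,\partial_v(f_ig_i)=\bigl(\partial_u f_i+i\,\partial_v f_i\bigr)g_i+\bigl(f_i\,\partial_u g_i+i\,f_i\,\partial_v g_i\bigr).
\]
The first summand vanishes because $f$ is left slice hyperholomorphic. For the second, the key point is that $f_i$ is $\C_i$-valued, so $i\,f_i=f_i\,i$ and hence $f_i\,\partial_u g_i+i\,f_i\,\partial_v g_i=f_i\bigl(\partial_u g_i+i\,\partial_v g_i\bigr)=0$ since $g\in\lhol(U)$; thus $fg\in\lhol(U)$. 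For $f\in\rhol(U)$ and $g\in\intrin(U)$ one argues symmetrically: the right slice derivative of $f_ig_i$ factors as $\bigl(\partial_u f_i+(\partial_v f_i)i\bigr)g_i+f_i\bigl(\partial_u g_i+(\partial_v g_i)i\bigr)$, where in the first term $g_i$ (being $\C_i$-valued) may be slid past $i$ so that it equals $\bigl(\partial_u f_i+(\partial_v f_i)i\bigr)g_i=0$, and the second term vanishes because $g\in\rhol(U)$.

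\emph{The composition statements.} Fix $i\in\S$. Since $g\in\intrin(U)$, $g$ maps $U\cap\C_i$ into $g(U)\cap\C_i$, so for $f\in\lhol(g(U))$ the composite $(f\circ g)_i=f_i\circ g_i$ is a well-defined real differentiable map on $U\cap\C_i$; moreover $g_i=p+iq$ with $p,q$ real-valued satisfies $\partial_u p=\partial_v q$ and $\partial_u q=-\partial_v p$. Writing $\partial_1 f_i,\partial_2 f_i$ for the partials of $f_i$ in the real and imaginary coordinates of its argument, the chain rule, together with these Cauchy--Riemann relations used to turn every $v$-derivative of $p,q$ into a $u$-derivative (all scalars involved commuting with everything), yields
\[
\partial_u(f_i\circ g_i)+i\,\partial_v(f_i\circ g_i)=\bigl[(\partial_1 f_i)(g_i)+i(\partial_2 f_i)(g_i)\bigr]\partial_u p+\bigl[(\partial_2 f_i)(g_i)-i(\partial_1 f_i)(g_i)\bigr]\partial_u q .
\]
Now $f\in\lhol(g(U))$ gives $(\partial_1 f_i)(g_i)=-i(\partial_2 f_i)(g_i)$; the first bracket is then $0$ immediately, and, using $i(-i)=1$, the second equals $(\partial_2 f_i)(g_i)-i\bigl(-i(\partial_2 f_i)(g_i)\bigr)=(\partial_2 f_i)(g_i)-(\partial_2 f_i)(g_i)=0$. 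Hence $f\circ g\in\lhol(U)$, and the case $f\in\rhol(g(U))$ is handled identically, with $i$ sitting to the right of the derivatives of $f$ throughout.

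\emph{Where the difficulty lies.} There is no deep obstacle here; the only thing demanding care is the noncommutativity of $\H$. In the product statements it is disposed of at once, because intrinsicity of one factor lets $i$ pass through it. In the composition statements one \emph{cannot} commute $i$ past the derivatives of $f$, which is only slice hyperholomorphic; instead intrinsicity of $g$ enters in a different guise, through the classical Cauchy--Riemann equations for $g_i$, and it is precisely these, exchanging $v$-derivatives for $u$-derivatives, that produce the cancellation of the two brackets above.
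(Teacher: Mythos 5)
Your argument is correct and complete: the Leibniz-rule computation for the two product statements (using that the intrinsic factor is $\C_i$-valued on the slice, hence commutes with $i$) and the chain-rule computation for the two composition statements (using that intrinsicity of $g$ forces $g(U\cap\C_i)\subset\C_i$ and that $p,q$ satisfy the classical Cauchy--Riemann system, with the real partials $\partial_u p,\partial_u q$ commuting with everything) are exactly the standard verifications, and the one genuinely delicate point --- that one may not commute $i$ past the derivatives of the merely slice hyperholomorphic factor --- is correctly circumvented in both places. Note, however, that the paper itself offers no proof of this theorem: it is quoted in Section 2 with a reference to the book \cite{MR2752913}, so there is no in-paper argument to compare against; your self-contained slice-wise proof is the route taken in that reference and fills the gap the paper leaves to the literature.
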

\begin{remark}\label{rmk23} {\rm As a consequence of the above theorem, we have that
intrinsic functions on $U$ are both left and right slice hyperholomorphic. As we shall see, the set $\intrin(U)$ is a commutative real subalgebra (with respect to a suitable product) of  $\lhol(U)$ and also of $\rhol(U)$.
This fact is of crucial importance for the definition of the $H^\infty$ functional calculus.}
\end{remark}

It is possible to introduce slice hyperholomorphic functions in different ways, see \cite{Global}.
Using Definition \ref{defslhyp},
to prove the most important results of this class of functions, such as the Cauchy formula, we need additional conditions on the open set $U$
that we introduced below.
For any $q=u+i_qv\in\mathbb H$ we define the set
$
 [q] := \{u +iv \ | \ i\in\S\}.
 $
By direct computations it follows that
an element $\tilde q$ belongs to $[q]$ if and only if it is of the form $\tilde q=r^{-1}q r$
for some $r\not=0$ and that $[q]$ is a $2$-sphere.

\begin{definition}
Let $U \subseteq \mathbb{H}$. We say that $U$ is axially symmetric
 if, for all $u+iv \in U$, the whole 2-sphere $[u+iv]$ is contained in $U$.
\end{definition}
\begin{definition}
Let $U \subseteq \mathbb{H}$ be a domain in $\mathbb{H}$. We say that $U$ is a
\textnormal{slice domain (s-domain for short)} if $U \cap \mathbb{R}$ is nonempty and
if $ U\cap\mathbb{C}_i$ is a domain in $\mathbb{C}_i$ for all $i \in \mathbb{S}$.
\end{definition}
We recall that a domain is  an open set that is also simply connected.
\\
To define rational functions and the associated rational functional calculus we are in need of a few more  properties of slice hyperholomorphic functions.

\begin{theorem}[Representation Formula]\label{formulaMON} Let $U$ be an axially symmetric s-domain $U \subseteq  \mathbb{H}$.
\\
Let $f\in \lhol(U)$.  Choose any
$j\in \mathbb{S}$.  Then the following equality holds for all $x=u+iv \in U $:
\begin{equation}\label{distribution}
f(u+iv) =\frac{1}{2}\Big[   f(u+jv)+f(u-jv)\Big] +i\frac{1}{2}\Big[ j[f(u-jv)-f(u+jv)]\Big].
\end{equation}
Moreover, for all $u, v \in \mathbb{R}$ such that $[u+iv] \subseteq U $, the functions
\begin{equation}\label{cappa}
\alpha(u,v)=\frac{1}{2}\Big[   f(u+jv)+f(u-jv)\Big] \ \ {\sl and}
\ \  \beta(u,v)=\frac{1}{2}\Big[ j [f(u-jv)-f(u+jv)]\Big]
\end{equation}
depend on $u,v$ only.
\\
Let $f\in \rhol(U)$.  Choose any
$j\in \mathbb{S}$.  Then the following equality holds for all $x=u+iv \in U $:
\begin{equation}\label{distributionright}
f(u+iv) =\frac{1}{2}\Big[   f(u+jv)+f(u-jv)\Big]+\frac{1}{2}\Big[ [f(u-jv)-f(u+jv)]j\Big]i.
\end{equation}
Moreover, for all $u, v \in \mathbb{R}$ such that $[u+iv] \subseteq U $, the functions
\begin{equation}\label{capparight}
\alpha(u,v)=\frac{1}{2}\Big[   f(u+jv)+f(u-jv)\Big] \ \ {\sl and}
\ \ \beta(u,v)=\frac{1}{2}\Big[ [f(u-jv)-f(u+jv)]j\Big]
\end{equation}
depend on $u,v$ only.
\end{theorem}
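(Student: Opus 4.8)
The plan is to prove (\ref{distribution}) for left slice hyperholomorphic $f$ first, the right case being entirely parallel, and then to read off the two \emph{moreover} statements. Fix $j\in\S$. Since $U$ is axially symmetric, $u+jv\in U$ and $u-jv\in U$ whenever $u+iv\in U$, so we may define on $U$ the auxiliary function
\[
\tilde f(u+iv):=\tfrac12\bigl[f(u+jv)+f(u-jv)\bigr]+i\,\tfrac12\Bigl[j\bigl(f(u-jv)-f(u+jv)\bigr)\Bigr].
\]
The first step is to verify that $\tilde f\in\lhol(U)$. Writing $F(s,t):=f_j(s+jt)$, one has $\tilde f(u+iv)=\tfrac12[F(u,v)+F(u,-v)]+\tfrac i2[j(F(u,-v)-F(u,v))]$; since $f$ is left slice hyperholomorphic on the plane $\C_j$, $F$ satisfies $\partial_s F+j\,\partial_t F\equiv 0$, equivalently $\partial_s F=-j\,\partial_t F$ and $j\,\partial_s F=\partial_t F$. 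Computing $\partial_u\tilde f_i$ and $\partial_v\tilde f_i$ by the chain rule for each fixed $i\in\S$ and substituting these two identities makes every term of $\tfrac12(\partial_u\tilde f_i+i\,\partial_v\tilde f_i)$ cancel, so $\tilde f$ satisfies (\ref{LHolEQ}); its real differentiability (including near $U\cap\R$) is inherited from that of $f$.

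Next, $\tilde f$ coincides with $f$ on $U\cap\C_j$: putting $i=j$ gives $\tilde f(u+jv)=F(u,v)=f(u+jv)$, and putting $i=-j$ and using $j^2=-1$ gives $\tilde f(u-jv)=F(u,-v)=f(u-jv)$. In particular $\tilde f=f$ on the nonempty set $U\cap\R$. Since $U$ is an axially symmetric s-domain and both $f$ and $\tilde f$ lie in $\lhol(U)$, the identity principle for slice hyperholomorphic functions on s-domains forces $f\equiv\tilde f$ on $U$, which is (\ref{distribution}). For the right slice hyperholomorphic case one repeats the argument verbatim with the candidate $\tilde f(u+iv):=\tfrac12[f(u+jv)+f(u-jv)]+\tfrac12\bigl[\bigl(f(u-jv)-f(u+jv)\bigr)j\bigr]i$ and with (\ref{RHolEQ}) in place of (\ref{LHolEQ}), obtaining (\ref{distributionright}).

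Finally, the \emph{moreover} assertions. By construction the functions $\alpha(u,v)$ and $\beta(u,v)$ in (\ref{cappa}) --- and likewise in (\ref{capparight}) --- are built solely from the values $f(u\pm jv)$, hence are functions of $(u,v)$ alone once $j$ is fixed; it remains to see they do not depend on the chosen $j$. If $\alpha',\beta'$ denote the functions obtained from a second unit $j'\in\S$, then for every $i\in\S$ with $u+iv\in U$ formula (\ref{distribution}) gives $\alpha(u,v)+i\,\beta(u,v)=f(u+iv)=\alpha'(u,v)+i\,\beta'(u,v)$; evaluating this at two distinct units $i$ (possible since $[u+iv]\subseteq U$) yields $\alpha=\alpha'$ and $\beta=\beta'$, and similarly in the right case. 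I expect the only genuine work to be the first step --- checking that $\tilde f$ is slice hyperholomorphic, which is where the hypothesis on $f$ enters essentially --- together with having the identity principle on s-domains available; everything after that is bookkeeping.
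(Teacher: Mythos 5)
Your argument is correct and is essentially the standard one: the paper gives no proof of this theorem (it is recalled from \cite{MR2752913}), and the proof there runs exactly along your lines --- show the right-hand side satisfies the Cauchy--Riemann-type equation \eqref{LHolEQ} on each slice by using holomorphy of $f$ on $\C_j$, check agreement with $f$ on $U\cap\C_j$ (hence on $U\cap\mathbb{R}$), and conclude by the identity principle, with the $j$-independence of $\alpha,\beta$ read off as you do. The only step to tighten is the assertion that $\tilde f$ is real differentiable across $U\cap\mathbb{R}$ (where $i_q$ is discontinuous); this is harmless because you can run the identity-principle argument slice by slice: for each fixed $i$ the restriction $\tilde f_i$ is manifestly real differentiable in $(u,v)$, satisfies \eqref{LHolEQ}, and agrees with $f_i$ on $U\cap\mathbb{R}$, so the Splitting Lemma and the classical identity principle already give $f_i\equiv\tilde f_i$ on $U\cap\C_i$ for every $i\in\mathbb{S}$.
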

\begin{lemma}[Splitting Lemma]\label{Splitting}
Let $U\subseteq \mathbb{H}$ be an open set.
\\
Let  $f\in  \lhol(U)$.  Then for
every $i \in \mathbb{S}$, and every $j\in\mathbb{S}$
perpendicular to $i$, there are two holomorphic functions
$F,G:U\cap \mathbb{C}_i \to \mathbb{C}_i$ such that for any $z=u+iv$, it is
$$
f_i(z)=F(z)+G(z)j.
$$
Let  $f\in  \rhol(U)$.
Then for
every $i \in \mathbb{S}$, and every $j\in\mathbb{S}$,
perpendicular to $i$, there are two holomorphic functions
$F,G:U\cap \mathbb{C}_i \to \mathbb{C}_i$ such that for any $z=u+iv$, it is
$$
f_i(z)=F(z)+jG(z).
$$
\end{lemma}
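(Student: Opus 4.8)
The plan is to exploit the fact that, once we fix $i\in\mathbb{S}$ and a $j\in\mathbb{S}$ orthogonal to $i$, the quadruple $\{1,i,j,ij\}$ is a real basis of $\mathbb{H}$, and more precisely $\mathbb{H}=\mathbb{C}_i\oplus\mathbb{C}_i j$ in the left case and $\mathbb{H}=\mathbb{C}_i\oplus j\mathbb{C}_i$ in the right case, both as direct sums of real vector spaces. First I would record the elementary fact that two orthogonal purely imaginary units anticommute, $ij=-ji$; this is what guarantees that $\mathbb{C}_i j=\mathrm{span}_{\mathbb{R}}\{j,ij\}$ (resp. $j\mathbb{C}_i=\mathrm{span}_{\mathbb{R}}\{j,ij\}$) is a real subspace complementary to $\mathbb{C}_i=\mathrm{span}_{\mathbb{R}}\{1,i\}$, so that the associated projections $\mathbb{H}\to\mathbb{C}_i$ are $\mathbb{R}$-linear.

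For $f\in\lhol(U)$ I would then decompose the restriction $f_i=f|_{U\cap\mathbb{C}_i}$ according to this splitting, writing $f_i(z)=F(z)+G(z)j$ with $F,G:U\cap\mathbb{C}_i\to\mathbb{C}_i$ uniquely determined; being obtained from $f_i$ by composition with fixed $\mathbb{R}$-linear maps, $F$ and $G$ inherit the real differentiability assumed of $f$. It remains to see that $F$ and $G$ are holomorphic on $U\cap\mathbb{C}_i$ once $\mathbb{C}_i$ is identified with $\mathbb{C}$ in the obvious way. Substituting $f_i=F+Gj$ into the left slice hyperholomorphicity equation (\ref{LHolEQ}) and using that left multiplication by $i$ maps $\mathbb{C}_i$ into $\mathbb{C}_i$ and $\mathbb{C}_i j$ into $\mathbb{C}_i j$, the equation separates into its $\mathbb{C}_i$-part $\partial_u F+i\,\partial_v F=0$ and its $\mathbb{C}_i j$-part $(\partial_u G+i\,\partial_v G)j=0$; cancelling $j$ in the latter, both are exactly the Cauchy--Riemann equations, whence $F,G$ are holomorphic. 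The right case is handled symmetrically: writing $f_i(z)=F(z)+jG(z)$ along $\mathbb{H}=\mathbb{C}_i\oplus j\mathbb{C}_i$ and plugging into (\ref{RHolEQ}), the $\mathbb{C}_i$-component gives $\partial_u F+\partial_v F\,i=0$ and the $j\mathbb{C}_i$-component gives $j(\partial_u G+\partial_v G\,i)=0$; since $\mathbb{C}_i$ is commutative these again reduce to the Cauchy--Riemann equations for $F$ and $G$.

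Since the argument is essentially bookkeeping, I do not anticipate a serious obstacle. The one point that genuinely requires care is keeping track of which side $i$ and $j$ act on, so that the slice hyperholomorphicity equation truly decouples into two independent Cauchy--Riemann equations rather than a coupled system; this is exactly where the anticommutativity of $i$ and $j$ enters, and it is also the reason the auxiliary unit $j$ must be placed on opposite sides in the left and right statements.
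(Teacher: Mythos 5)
Your proof is correct: decomposing $f_i$ along $\mathbb{H}=\mathbb{C}_i\oplus\mathbb{C}_i j$ (resp.\ $\mathbb{C}_i\oplus j\mathbb{C}_i$) and observing that left (resp.\ right) multiplication by $i$ preserves both summands, so that equation (\ref{LHolEQ}) (resp.\ (\ref{RHolEQ})) decouples into Cauchy--Riemann systems for $F$ and $G$, is exactly the standard argument for this lemma, which the paper itself does not reprove but cites from \cite{MR2752913}. No gaps to report.
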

\begin{remark}
{\rm
In the Splitting Lemma the two holomorphic functions $F$ and $G$ depend on the complex plane $\mathbb{C}_i$ that we consider.
}
\end{remark}

The Cauchy formula for slice hyperholomorphic functions, with a slice hyperholomorphic kernel, is the key tool to define the $S$-functional calculus.
Such formula has two different Cauchy kernels according to right or left slice hyperholomorphicity;
these kernels have power series expansions for $|q|<|s|$:
$\sum_{n=0}^\infty q^ns^{-1-n}$ (in the left case), and $\sum_{n=0}^\infty s^{-1-n}q^n$ (in the right case).
The sum of the first series leads to the definition of the left slice hyperholomorphic Cauchy kernel; analogously the sum of the second series gives
the right slice hyperholomorphic Cauchy kernel.
\begin{definition}
The  left slice hyperholomorphic Cauchy kernel is \[S_L^{-1}(s,q) = -(q^2-2\Re(s)q + |s|^2)^{-1}(q-\overline{s})\quad\text{for }q\notin[s]\]
and the right slice hyperholomorphic Cauchy kernel is
\[S_R^{-1}(s,q) = -(q-\overline{s})(q^2-2\Re(s)q + |s|^2)^{-1}\quad\text{for }q\notin[s].\]
\end{definition}
So we can state the Cauchy formulas:
\begin{theorem}\label{Cauchy}
Let $U\subset\H$ be an axially symmetric slice domain such that its boundary $\partial (U\cap\C_i)$ in $\C_i$ consists of a finite number of continuously differentiable Jordan curves. Let $i\in\S$ and set $ds_i = -i\, ds$. If $f$ is left slice hyperholomorphic on an open set that contains $\overline{U}$, then
\[f(q) = \frac{1}{2\pi}\int_{\partial(U\cap\C_i)} S_L^{-1}(s,q)\,ds_i\, f(s)\quad\text{for all }q\in U.\]
If $f$ is right slice hyperholomorphic on an open set that contains $\overline{U}$, then
\[f(q) = \frac{1}{2\pi}\int_{\partial(U\cap\C_i)}f(s)\, ds_i\, S_R^{-1}(s,q)\quad\text{for all }q\in U.\]
The above integrals do not depend neither on the open set $U$ nor on the complex plane $\mathbb{C}_i$ for $i\in \mathbb{S}$.
\end{theorem}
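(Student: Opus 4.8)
The plan is to reduce the identity to the classical complex Cauchy integral formula on a single slice $\C_i$ by means of the Splitting Lemma (Lemma~\ref{Splitting}), and then to propagate the resulting formula to all of $U$ using the Representation Formula (Theorem~\ref{formulaMON}). I treat the left slice hyperholomorphic case; the right case is obtained by the symmetric argument, with the roles of left and right interchanged throughout.

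\emph{Step 1: the formula on the slice.} Fix $i\in\S$ and take first $q\in U\cap\C_i$. For $s$ on the contour $\partial(U\cap\C_i)$ one has $s,q\in\C_i$, so all the relevant factors commute and $q^2-2\Re(s)q+|s|^2=(q-s)(q-\overline{s})$; hence the Cauchy kernel collapses to the classical one, $S_L^{-1}(s,q)=(s-q)^{-1}$. Choosing $j\in\S$ with $j\perp i$ and applying the Splitting Lemma, write $f_i(z)=F(z)+G(z)j$ on $U\cap\C_i$ with $F,G\colon U\cap\C_i\to\C_i$ holomorphic; since $f$ is slice hyperholomorphic on a neighbourhood of $\overline{U}$, the functions $F,G$ are holomorphic on a neighbourhood of $\overline{U\cap\C_i}$, so the classical Cauchy formula applies to each of them along the finitely many continuously differentiable Jordan curves forming $\partial(U\cap\C_i)$. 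Inserting $ds_i=-i\,ds$, using $-i=i^{-1}$ and the commutativity of $\C_i$, and keeping the factor $j$ on the right (where it does not interfere with the scalar factors, all of which lie in $\C_i$), we obtain
\begin{equation*}
\frac{1}{2\pi}\int_{\partial(U\cap\C_i)}S_L^{-1}(s,q)\,ds_i\,f(s)=\frac{1}{2\pi i}\int_{\partial(U\cap\C_i)}\frac{F(s)}{s-q}\,ds+\Bigl(\frac{1}{2\pi i}\int_{\partial(U\cap\C_i)}\frac{G(s)}{s-q}\,ds\Bigr)j=F(q)+G(q)j=f(q).
\end{equation*}
This proves the formula for all $q\in U\cap\C_i$.

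\emph{Step 2: propagation and conclusion.} Let $q=u+i_qv\in U$ be arbitrary. By axial symmetry the $2$-sphere $[q]$ lies in $U$, so in particular $u\pm iv\in U\cap\C_i$. Applying the Representation Formula to $f$ with reference unit $i$ gives $f(q)=\tfrac12\bigl(f(u+iv)+f(u-iv)\bigr)+\tfrac{i_qi}{2}\bigl(f(u-iv)-f(u+iv)\bigr)$; substituting for $f(u\pm iv)$ the slice formula of Step 1 and collecting the kernel terms under the integral sign, the statement reduces to the pointwise identity
\begin{equation*}
\tfrac12\bigl(S_L^{-1}(s,u+iv)+S_L^{-1}(s,u-iv)\bigr)+\tfrac{i_qi}{2}\bigl(S_L^{-1}(s,u-iv)-S_L^{-1}(s,u+iv)\bigr)=S_L^{-1}(s,u+i_qv),
\end{equation*}
holding for every $s$ on the contour. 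This is precisely the assertion that $q\mapsto S_L^{-1}(s,q)$, which is left slice hyperholomorphic on the axially symmetric set $\H\setminus[s]$, obeys its own Representation Formula; equivalently it can be verified by a direct quaternionic computation from the definition of $S_L^{-1}$ together with $i_q^2=-1$, $\Re(u+i_qv)=u$ and $|u+i_qv|^2=u^2+v^2$. Establishing this kernel-reproduction property is the one genuinely delicate step; everything else is either classical complex analysis or bookkeeping of noncommuting factors. Once it is available, the formula holds for every $q\in U$. Finally, independence of the plane $\C_i$ and of the domain $U$ follows at once: the left-hand side $f(q)$ depends on neither, hence any two admissible choices yield integrals with the common value $f(q)$. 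For $f\in\rhol(U)$ one repeats the argument with the splitting $f_i(z)=F(z)+jG(z)$, the right Representation Formula~\eqref{distributionright}, and the right Cauchy kernel $S_R^{-1}(s,q)$, which on $\C_i$ likewise reduces to $(s-q)^{-1}$.
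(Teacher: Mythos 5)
The paper itself does not prove Theorem \ref{Cauchy}; it is recalled as a preliminary with a pointer to \cite{MR2752913}, so there is no in-paper argument to compare against, and your proposal must stand on its own. It does: your two-step scheme (collapse of the kernel to $(s-q)^{-1}$ on the slice plus the Splitting Lemma, reducing to the classical Cauchy formula for the $\C_i$-valued holomorphic components $F,G$; then propagation to a general $q=u+i_qv$ via the Representation Formula, which transfers everything to a reproduction identity for the kernel) is exactly the standard proof of this theorem, and the final independence claim is correctly disposed of by noting that every admissible choice of $U$ and $i$ yields the same value $f(q)$. The only point to tighten is your justification of the key identity
\[
S_L^{-1}(s,u+i_qv)=\tfrac12\bigl(S_L^{-1}(s,u+iv)+S_L^{-1}(s,u-iv)\bigr)+\tfrac{i_qi}{2}\bigl(S_L^{-1}(s,u-iv)-S_L^{-1}(s,u+iv)\bigr).
\]
You first appeal to the kernel ``obeying its own Representation Formula'' on $\H\setminus[s]$, but Theorem \ref{formulaMON} as stated here requires an axially symmetric s-domain, and with this paper's convention that a domain is open \emph{and simply connected} the slices of $\H\setminus[s]$ (punctured or doubly punctured planes) do not qualify; so, within the hypotheses available in this paper, the correct route is the direct computation you offer as the alternative, and it should be carried out rather than asserted. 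It does go through: writing $q^2-2\Re(s)q+|s|^2=A+Bi_q$ with $A=u^2-v^2-2\Re(s)u+|s|^2$ and $B=2v(u-\Re(s))$ real, one finds that both sides equal $-(A^2+B^2)^{-1}(A-Bi_q)(u+i_qv-\bar s)$, which closes the argument. With that computation written out (and the usual positive orientation of $\partial(U\cap\C_i)$ understood in Step 1), your proof is complete and coincides in substance with the proof in the cited book.
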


\begin{theorem}
The left slice hyperholomorphic Cauchy kernel $S_L^{-1}(s,q)$ is left slice hyperholomorphic in the variable $q$ and right slice hy\-per\-ho\-lo\-mor\-phic in the variable $s$ in its domain of definition (a similar result holds for $S_R^{-1}(s,q)$). Moreover, we have $S_R^{-1}(s,q) = - S_L^{-1}(q,s)$.
\end{theorem}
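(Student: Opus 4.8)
The plan is to deduce all three assertions from the two closed forms of the kernels together with the multiplication property recalled in Section \ref{SEc2}: multiplication by an intrinsic function preserves left (resp.\ right) slice hyperholomorphicity. Throughout write $\mathcal{Q}_a(b):=b^2-2\Re(a)b+|a|^2$, so that $S_L^{-1}(s,q)=-\mathcal{Q}_s(q)^{-1}(q-\bar s)$ and $S_R^{-1}(s,q)=-(q-\bar s)\mathcal{Q}_s(q)^{-1}$.

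I would first prove the identity $S_R^{-1}(s,q)=-S_L^{-1}(q,s)$. Since $-S_L^{-1}(q,s)=\mathcal{Q}_q(s)^{-1}(s-\bar q)$, this is equivalent to the algebraic identity $\mathcal{Q}_q(s)(q-\bar s)=(\bar q-s)\mathcal{Q}_s(q)$, which I would verify by expanding both sides and using $2\Re(a)=a+\bar a$, $|a|^2=a\bar a=\bar a a$ to commute the real scalars through the products; after cancelling the common terms the identity collapses to $(qs-sq)q=\bar q(qs-sq)$, and this holds because $qs-sq\ (=\bar s q-q\bar s)$ is purely imaginary and orthogonal to $\underline q$, so that $\Re\big((qs-sq)q\big)=0$ and the equality follows upon conjugation. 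Interchanging $s$ and $q$ in this identity and inserting the definition of $S_R^{-1}$ gives also the ``second representation'' $S_L^{-1}(s,q)=-S_R^{-1}(q,s)=(s-\bar q)\mathcal{Q}_q(s)^{-1}$.

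Next I would verify the slice hyperholomorphy of $S_L^{-1}(s,q)$. Fix $s$ and regard $q$ as the variable on the axially symmetric open set $\H\setminus[s]$. The map $q\mapsto\mathcal{Q}_s(q)$ is a polynomial in $q$ with real coefficients, hence intrinsic, and so is $q\mapsto\mathcal{Q}_s(q)^{-1}$ (the reciprocal of a nowhere-vanishing intrinsic function is intrinsic, as one checks on each $\C_i$); while $q\mapsto q-\bar s$ is left slice hyperholomorphic, being a sum of the identity---which is intrinsic, hence left slice hyperholomorphic---and a constant function. Since $\intrin(U)\cdot\lhol(U)\subseteq\lhol(U)$, the product $S_L^{-1}(s,q)=-\mathcal{Q}_s(q)^{-1}(q-\bar s)$ is left slice hyperholomorphic in $q$. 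For the dependence on $s$ I would use the second representation on $\H\setminus[q]$: there $s\mapsto\mathcal{Q}_q(s)$ is a polynomial in $s$ with real coefficients, hence $s\mapsto\mathcal{Q}_q(s)^{-1}$ is intrinsic, while $s\mapsto s-\bar q$ is right slice hyperholomorphic; since $\rhol(U)\cdot\intrin(U)\subseteq\rhol(U)$, the product $S_L^{-1}(s,q)=(s-\bar q)\mathcal{Q}_q(s)^{-1}$ is right slice hyperholomorphic in $s$. (Alternatively, one can verify equations (\ref{LHolEQ}) and (\ref{RHolEQ}) directly on each $\C_i$, using that $\mathcal{Q}_s(q)$ restricts there to a holomorphic polynomial in $z=u+iv$; the factorization argument above is shorter.)

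Finally, the statement for $S_R^{-1}$ requires no new computation: by the first step, for fixed $s$ the map $q\mapsto S_R^{-1}(s,q)=-S_L^{-1}(q,s)$ puts $q$ in the first slot of $S_L^{-1}$, in which $S_L^{-1}$ is right slice hyperholomorphic, and for fixed $q$ the map $s\mapsto S_R^{-1}(s,q)$ puts $s$ in the second slot, in which $S_L^{-1}$ is left slice hyperholomorphic; hence $S_R^{-1}(s,q)$ is right slice hyperholomorphic in $q$ and left slice hyperholomorphic in $s$. The one genuine obstacle in this plan is the algebraic identity of the first step: because the factor $q-\bar s$ sits on opposite sides of the quadratic in the two forms of the kernel, one must keep careful account of the non-commutativity, the crux being to recognise that the residual term $(qs-sq)q-\bar q(qs-sq)$ vanishes; everything after that is formal, resting only on $\mathcal{Q}_a(b)$ being intrinsic in $b$ for fixed $a$ and on the multiplicative properties of intrinsic functions.
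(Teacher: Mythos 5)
Your proposal is correct, and all three assertions are established without gaps. A point of comparison is slightly moot here: the paper does not prove this theorem at all — it is recalled in Section \ref{SEc2} with the proofs delegated to the book \cite{MR2752913}, where the usual argument verifies the defining equations \eqref{LHolEQ}--\eqref{RHolEQ} directly on the explicit formulas (or obtains the kernels as sums of the series $\sum_n q^n s^{-1-n}$, $\sum_n s^{-1-n}q^n$ and extends). Your route is genuinely different and arguably cleaner: you first prove the symmetry $S_R^{-1}(s,q)=-S_L^{-1}(q,s)$ as a purely algebraic identity, reducing it after cancellation to $(qs-sq)q=\bar q(qs-sq)$, which you settle correctly by noting that the commutator is purely imaginary and orthogonal to $\underline q$, so that $(qs-sq)q$ has zero real part and conjugation yields the equality; this simultaneously hands you the second form $S_L^{-1}(s,q)=(s-\bar q)\bigl(s^2-2\Re(q)s+|q|^2\bigr)^{-1}$. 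The two holomorphy statements then follow formally from the multiplication rules of Theorem 2.2: real-coefficient polynomials in the slice variable are intrinsic, the pointwise reciprocal of a nonvanishing intrinsic function is again intrinsic (checked slicewise, since values stay in $\C_i$), and intrinsic factors may be multiplied on the appropriate side of a left (resp.\ right) slice hyperholomorphic factor. Note also that this argument needs no s-domain or simple-connectivity hypothesis — the multiplication theorem is stated for arbitrary open sets, so the set $\H\setminus[s]$, whose slices are twice-punctured planes, causes no difficulty — and your final step correctly transfers the regularity to $S_R^{-1}$ by reading $q$ and $s$ through the two slots of $S_L^{-1}$ via the symmetry identity. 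What your approach buys is a proof resting only on the algebraic structure of $\intrin$, $\lhol$, $\rhol$, with no differentiation of the kernel; what the direct verification buys is independence from the multiplication theorem, which is itself a nontrivial structural fact.
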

In the sequel
we will be in need of the following theorem:
\begin{theorem}[Cauchy's integral theorem]
Let $U\subset\H$ be an open set, let $i\in\S$ and let $D_i$ be an open subset of $U\cap\C_i$ with $\overline{D_i}\subset U\cap\C_i$ such that its boundary $\partial D_i$ consists of a finite number of continuously differentiable Jordan curves. For any $f\in\rhol(U)$ and $g\in\lhol(U)$, it is
\[\int_{\partial D_i}f(s)\,ds_i\,g(s) = 0,\]
where $ds_i = -i\ ds$.
\end{theorem}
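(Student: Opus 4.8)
The plan is to reduce the statement to the classical Cauchy integral theorem in the complex plane $\C_i$ by means of the Splitting Lemma. Fix $i\in\S$ and choose $j\in\S$ perpendicular to $i$, so that $\H=\C_i\oplus\C_i j$ with $\C_i$ a commutative field isomorphic to $\C$. Applying the Splitting Lemma on the slice $U\cap\C_i$ to $f\in\rhol(U)$ and to $g\in\lhol(U)$ produces holomorphic functions $F,G,P,Q\colon U\cap\C_i\to\C_i$ with
\[
f_i(z)=F(z)+jG(z),\qquad g_i(z)=P(z)+Q(z)j,\qquad z=u+iv\in U\cap\C_i.
\]
Since $\overline{D_i}\subset U\cap\C_i$, these four functions are holomorphic on a neighbourhood of $\overline{D_i}$; along $\partial D_i\subset\C_i$ we have $f=f_i$, $g=g_i$, and $ds_i=-i\,ds$ is $\C_i$-valued.

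Now substitute the two decompositions into the integrand. The factors $F,G,P,Q$ and $ds_i$ all lie in the commutative plane $\C_i$ and may be rearranged among themselves, while the two constant units $j$ remain in place; expanding $(F+jG)\,ds_i\,(P+Qj)$ gives the four terms $FP\,ds_i$, $(FQ\,ds_i)\,j$, $j\,(GP\,ds_i)$ and $j\,(GQ\,ds_i)\,j$. Integrating over $\partial D_i$ and pulling the constants $j$ out of the integral (which is $\H$-linear on both sides), one obtains
\[
\int_{\partial D_i} f(s)\,ds_i\,g(s)=\int_{\partial D_i} FP\,ds_i+\Big(\int_{\partial D_i} FQ\,ds_i\Big)j+j\int_{\partial D_i} GP\,ds_i+j\Big(\int_{\partial D_i} GQ\,ds_i\Big)j.
\]
Each of $FP,FQ,GP,GQ$ is a product of holomorphic $\C_i$-valued functions, hence holomorphic, so $\int_{\partial D_i} FP\,ds_i=-i\int_{\partial D_i} FP\,ds=0$ by the classical Cauchy--Goursat theorem in $\C_i\cong\C$ --- here one uses exactly that $\partial D_i$ consists of finitely many continuously differentiable Jordan curves bounding $D_i$ and that the integrand is holomorphic on a neighbourhood of $\overline{D_i}$ (equivalently, split each holomorphic $\C_i$-valued function into its two real parts and apply Green's theorem) --- and likewise for $FQ$, $GP$, $GQ$. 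Hence all four terms vanish, and $\int_{\partial D_i}f(s)\,ds_i\,g(s)=0$.

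The proof is essentially bookkeeping; the only point requiring attention is the non-commutativity, namely keeping $f$ to the left and $g$ to the right of $ds_i$ throughout, and checking that after the splitting the scalar factors really do recombine into holomorphic $\C_i$-valued functions multiplied by the constant units $j$ (this is where it matters that the $j$'s produced by the two splittings sit on the \emph{outside} of the product). An alternative that avoids choosing $j$ is a direct application of Stokes' theorem: on $\C_i$ one has $s=u+iv$ and $ds_i=dv-i\,du$, so that $\omega:=f(s)\,ds_i\,g(s)=-\,f\,i\,g\,du+f\,g\,dv$ is a smooth $\H$-valued $1$-form (the restrictions $f_i,g_i$ being holomorphic, hence smooth, on $U\cap\C_i$) with
\[
d\omega=\Big(\big[\partial_u f+(\partial_v f)\,i\big]g+f\big[\partial_u g+i\,\partial_v g\big]\Big)\,du\wedge dv=0,
\]
the two brackets vanishing by equation (\ref{RHolEQ}) for the right slice hyperholomorphic $f$ and by equation (\ref{LHolEQ}) for the left slice hyperholomorphic $g$ restricted to $\C_i$; Stokes' theorem, applied to each real component of $\omega$ on $D_i$, then gives $\int_{\partial D_i}\omega=\int_{D_i}d\omega=0$.
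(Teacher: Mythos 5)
Your proof is correct. Note that the paper itself does not prove this theorem: it appears among the recalled preliminaries in Section 2, with all proofs deferred to the book \cite{MR2752913}, so there is no internal argument to compare against. Your first argument is exactly the standard one for this statement: use the Splitting Lemma to write $f_i=F+jG$ and $g_i=P+Qj$ with $F,G,P,Q$ holomorphic and $\C_i$-valued, keep the constant units $j$ on the outside, commute the $\C_i$-valued factors, and reduce to four applications of the classical Cauchy--Goursat theorem on $\C_i\cong\C$; the bookkeeping with the non-commuting $j$'s is handled correctly. Your alternative via Stokes' theorem is also valid and in some ways cleaner: it avoids the choice of $j$ entirely, and the computation $d\omega=\bigl(\bigl[\partial_u f+(\partial_v f)i\bigr]g+f\bigl[\partial_u g+i\,\partial_v g\bigr]\bigr)\,du\wedge dv$ shows transparently why $f$ must be \emph{right} and $g$ \emph{left} slice hyperholomorphic and why they must sit on the correct sides of $ds_i$. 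The only implicit point, shared with the theorem statement itself, is that $D_i$ is taken to be the bounded region enclosed by the finitely many Jordan curves (equivalently $\overline{D_i}$ compact), which is what legitimizes Cauchy--Goursat, respectively Green/Stokes, on $\overline{D_i}$; it is worth stating this explicitly, but it is not a gap in your reasoning.
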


\section{Rational functions and their functional calculus}

Let $V$ be a two-sided quaternionic Banach space. We denote the set of all bounded quaternionic right-linear operators on $V$ by $\boundOP(V)$.
In the quaternionic setting, in particular for unbounded operators, we have to specify if
 we are considering  a left-linear or a right-linear operator.
When some properties of a quaternionic operator depend just on linearity we simply say (quaternionic) linear operator and
 we do not specify the type of linearity.
In analogy with the complex case, we say that a linear
operator, whose domain  $\mathcal{D}(T):=\{v\in V\ :\ Tv\in V\}$, is closed if its graph is closed.
\begin{definition}
We define the $S$-resolvent set of a linear closed operator $T$ as
\[\rho_S(T):= \{ s\in\H \ :\  (T^2 - 2\Re(s)T + |s|^2\id)^{-1}\in\boundOP(V)\},\]
where
$$
T^2-2 {\rm Re}(s) T +|s|^2\mathcal{I}\, :\, \mathcal{D}(T^2)\to V,
$$
and the $S$-spectrum of $T$ as
\[\sigma_S(T):=\H\setminus\rho_S(T).\]
\end{definition}
If we consider bounded linear  operators the $S$-spectrum is a compact and nonempty set in $\mathbb{H}$, but in the case of unbounded operators the $S$-spectrum can be every closed subset of $\mathbb{H}$, also an empty set.
In the sequel, when we consider unbounded operators we will assume that the $S$-resolvent set is nonempty.

For $n=0,1,2,...$, the powers of an operator $T$ are defined inductively by the relations $T^0=\mathcal{I}$, $T^1=T$ and
$$
\mathcal{D}(T^n)=\{v\ :\ v\in \mathcal{D}(T^{n-1}),\ \ T^{n-1}v\in \mathcal{D}(T)\ \},
$$
$$
T^nv=T(T^{n-1}v),\ \ \ v\in \mathcal{D}(T^n).
$$
For  $a_\ell\in \mathbb{H}$,  $\ell=0,\ldots ,m$,  the polynomial
$$
P_m(q)=\sum_{\ell=0}^m a_\ell q^\ell,
$$
of degree $m\in \mathbb{N}$, is right slice hyperholomorphic.
The natural functional calculus for polynomials is obtained by replacing $q$ by the right (resp. left) linear operator $T$. We obtain
the right (resp. left)  linear quaternionic operator
$$
P_m(T)=\sum_{\ell=0}^m a_\ell T^\ell: \mathcal{D}(T^m)\to V.
$$
Analogous considerations can be done when we consider a left hyperholomorphic  polynomial
$P_m(q)=\sum_{\ell=0}^m q^\ell a_\ell$ of degree $m\in \mathbb{N}$ and
the  right (resp. left) linear quaternionic operator
$$
P_m(T)=\sum_{\ell=0}^m T^\ell a_\ell: \mathcal{D}(T^m)\to V
$$
is obtained  replacing $q$ by a right (resp. left) linear quaternionic operator $T$.
\\
\\
An important ingredient in the definition of the $H^\infty$ functional calculus
is the rational functional calculus.
To define the quaternionic $H^\infty$ functional calculus  we have to define the rational functional calculus for intrinsic functions.
 As we have already observed in Remark \ref{rmk23}, this class consists of functions that are both left and right slice hyperholomorphic. Thus we first give the definition of rational left  slice hyperholomorphic
functions and then we consider the subset of rational intrinsic functions.
\\
\\
Suppose that $U \subseteq \mathbb{H}$ is an axially symmetric $s$-domain and let $f$ and $g:U\to \mathbb{H}$ be  left slice hyperholomorphic functions. For any $i,j\in\mathbb{S}$, with  $i\perp j$, the Splitting Lemma \ref{Splitting} guarantees the existence of four holomorphic functions
 $F,G,H,K: \ U\cap\mathbb{C}_i\to \mathbb{C}_i$ such
 that for all $z=x+iy\in   U\cap\mathbb{C}_i$
 \begin{equation}\label{fandg}
 f_i(z)=F(z)+G(z)j \qquad g_i(z)=H(z)+K(z)j.
 \end{equation}
We define the function $f_i\star g_i:\ U\cap\mathbb{C}_i\to \mathbb{H}$ as
\begin{equation}\label{f*g}
f_i\star g_i(z)=[F(z)H(z)-G(z)\overline{K(\bar z)}]+[F(z)K(z)+G(z)\overline{H(\bar z)}]j.
\end{equation}
We now note that the Representation Formula provides an extension operator denoted by  ${\rm ext}$, see \cite{MR2752913}, and so can give the following definition:
 \begin{definition}
Let $U\subseteq\mathbb{H}$ be an axially symmetric s-domain and let
 $f,g:\ U\to\mathbb{H}$  be left slice hyperholomorphic. The function
$$
(f\star g)(q)={\rm ext}(f_i\star g_i)(q)$$ defined as the extension of (\ref{f*g})
is called the slice hyperholomorphic  product of $f$
and $g$ and is denoted by $f\star g$.
 \end{definition}
 \begin{remark}{\rm Let $U$ be an axially symmetric s-domain. The sets $\lhol(U)$ and $\rhol(U)$ equipped with the operation of sum and $*$-product turn out to be non commutative, unital, real algebras. $\mathcal{N}(U)$ is a real subalgebra of both of them.}
 \end{remark}
 \begin{Ex}{\rm
In the case $f$ and $g$ have power series expansion:
$\sum_{n=0}^\infty q^n a_n$ and $\sum_{n= 0}^\infty q^n b_n$, respectively for $a_n,b_n\in\mathbb{H}$, then the
slice hyperholomorphic product becomes
\begin{equation}\label{*perserie}
\Big(\sum_{n=0}^\infty q^n a_n\Big) \star \Big(\sum_{n= 0}^\infty q^n b_n\Big)=\sum_{n= 0}^\infty q^n \sum_{k=0}^na_{n-k}b_k.
\end{equation}
}
\end{Ex}
\begin{remark}
{\rm
For more comments on the slice hyperholomorphic  product and all its consequences see the book \cite{MR2752913}.
}
\end{remark}
Let $f$ be as above, and let its restriction to $\mathbb C_i$ be as in \eqref{fandg}.
We define the function $f^s_i:\ U\cap\mathbb{C}_i\to \mathbb{C}_i$ as
\begin{equation}\label{f^s_qua}
f^s_i(z):=F(z)\overline{F(\bar z)}+G(z)\overline{G(\bar z)},
\end{equation}
and we set
$$
f^s(q)={\rm ext}(f^s_i)(q).
$$
\begin{definition}[Slice inverse function]
Let $U\subseteq \mathbb{H}$  be an axially symmetric s-domain and let $f : U \to \mathbb{H}$
be a left slice hyperholomorphic function.
We define the function $f^{-\star}$ as
$$
f^{-\star}(q):=(f^s(q))^{-1}f^c(q),
$$
where $f^c(q)$ is the slice hyperholomorphic extension of
$$
f_i^c(z)=\overline{F(\bar z)}-G(z)j.
$$
The function $f^{-\star}$ is defined on $U$ outside the zeros of $f^s$.
\end{definition}
Let $Q(s)$ be a polynomial (with quaternionic coefficient on the right); then   $Q^{-\star}(s)$ is a rational function. Given a polynomial $P(s)$, the quaternionic rational functions are of the form
$$
R(s)=(P\star Q^{-\star})(s),
$$
or
$$
R(s)= (Q^{-\star}\star P)(s),
$$
so, in principle, one should make a choice between right quotient (first case) or left quotients (second case). In case $Q(s)$ has real coefficients, the two choices are equivalent, and this will be the case we will consider in this paper.
\begin{definition}[Rational functional calculus]
Let $R=P*Q^{-*}$ be a rational function  and assume
that $R$ has no poles on the S-spectrum of $T$.  Let $T$ be a closed densely defined operator.
 We define the rational functional calculus as:
$$
R(T)=(P\star Q^{-*})(T)
$$
(or  $R(T)=(Q^{-*}\star P )(T)$).
\end{definition}
The operator $R(T)$ is closed and densely defined and its domain is $\mathcal{D}(T^m)$ where
$$
m:=\max\{0, \deg P-\deg Q\}.
$$
\begin{remark}
For our purposes, we will consider intrinsic rational functions defined on $U$. In this case
$$
R\star R_1=R R_1=R_1 R=R_1\star R,
$$
for every $R$ and $R_1$ intrinsic rational functions.
Moreover, if $P(s)$ and $Q(s)$ are intrinsic polynomial then  $(P\star Q^{-*})(s)=(PQ^{-1})(s)=(Q^{-1}P)(s)$.
Intrinsic rational functions constitute a real, commutative, subalgebra of both the real algebras $\lhol(U)$ and   $\rhol(U)$.
\end{remark}
\begin{Ex}{\rm
An important example of intrinsic rational function, useful in the sequel, is
$$
\psi_k(s)=\Big(\frac{s}{1+s^2}\Big)^k, \ \ k\in \mathbb{N}.
$$
Note that, for the sake of simplicity, from now on we will write $\psi(s)$ instead of $\psi_k(s)$.
\\
We recall that slice hyperholomorphic rational functions have poles that are real points and/or spheres. This is compatible with the  structure of the $S$-spectrum of $T$ that consists of real point and/or spheres, see p.142 in \cite{MR2752913}.\\
With $\psi$ as above, we have
$$
\psi(T)= \Big(T(\mathcal{I}+T^2)^{-1}\Big)^k, \ \ k\in \mathbb{N}.
$$
}
\end{Ex}
We summarize in the following the properties of the rational functional calculus. 
The proofs are similar to the classical results and for this reason we omit them.
\begin{Pn}\label{RATUNO} Let $T$ be a linear quaternionic operator single valued on a quaternionic Banach space $V$.
Let $P$ and $Q$ be  intrinsic quaternionic polynomials of order $n$ and $m$, respectively.
 Then
\begin{itemize}
\item[(i)]
If $P\not\equiv 0$ then $P(T)Q(T)=(PQ)(T)$.
\item[(ii)]
If $P(T)$ is injective and $Q\not\equiv 0$ then
$$
\mathcal{D}(P(T)^{-1})\cap \mathcal{D}(Q(T))\subset \mathcal{D}(P(T)^{-1}Q(T))\cap \mathcal{D}(Q(T)P(T)^{-1})
$$
and
$$
P(T)^{-1}Q(T)v=Q(T)P(T)^{-1}v,\ \ \ \ \forall v\in \mathcal{D}(Q(T))\cap \mathcal{D}(P(T)^{-1}).
$$
\item[(iii)]
 Suppose that $T$ is a closed linear operator with $\rho_S(T)\not=\emptyset$. Then
$P(T)$ is closed and $P(\sigma_S(T))=\sigma_S(P(T))$.
\end{itemize}
\end{Pn}
For rational functions we have
\begin{Pn}\label{propratio}
Let $T$ be a linear quaternionic operator single valued on a quaternionic Banach space $V$ with $\rho_S(T)\not=\emptyset$.
Let $0\not\equiv R=PQ^{-1}$ and $R_1=P_1Q_1^{-1 }$ be intrinsic rational functions. Then we have:
\begin{itemize}
\item[(i)]
$R(T)$ is a closed operator.
\item[(ii)]
$R(\overline{\sigma_S}(T))\subset \overline{\sigma_S}(R(T))$,
where $\overline{\sigma_S}(T)=\sigma_S(T)\cup\{\infty\}$ denotes the extended $S$-spectrum of $T$.
\item[(iii)]
$R(T)R_1(T)\subset (RR_1)(T)$ and  equality holds if
$$
(\deg(P)-\deg(Q))(\deg(P_1)-\deg(Q_1))\geq 0.
$$
\item[(iv)]
$R(T)+R_1(T)\subset (R+R_1)(T)$ and  equality holds if
$$
\deg(PQ_1+P_1Q)=\max\{\deg(PQ_1), \deg(P_1Q)\}.
$$
\end{itemize}
\end{Pn}

\section{The $S$-functional calculus for quaternionic operators of type $\omega$}
As we have mentioned in the introduction we want to show that, at least for a suitable subclass of closed densely defined operators, we can extend the
formulas of the $S$-functional calculus for bounded operators. In order to do this we need to define the $S$-resolvent operators as follows.
Let $T$ be a closed linear operator on a two-sided quaternionic Banach space $V$ and assume that $s\in\rho_S(T)\not=\emptyset$, then the operator
\[
Q_s(T):=(T^2-2\Re(s)T +|s|^2\id )^{-1}
\]
is called the pseudo-resolvent of $T$.
\\
 We will denote the set of all closed  quaternionic right-linear operators on the two-sided quaternionic Banach space $V$ by $\closOP(V)$; in the case of left-linear operators we will use the notation $\closOP_L(V)$.
\begin{definition}
Let $T\in\closOP(V)$. The left $S$-resolvent operator is defined as
\begin{equation}\label{SresolvoperatorL}
S_L^{-1}(s,T):= Q_s(T)\overline{s} -TQ_s(T),\ \ \ \ s\in\rho_S(T),
\end{equation}
and the right $S$-resolvent operator is defined as
\begin{equation}\label{SresolvoperatorR}
S_R^{-1}(s,T):=-(T-\id \overline{s})Q_s(T),\ \ \ \ s\in\rho_S(T).
\end{equation}
\end{definition}
In the sequel we will work just with right linear operators and the above $S$-resolvent operators are slice hyperholomorphic functions operator-valued
and $S_L^{-1}(s,T)v$ and $S_R^{-1}(s,T)v$ are defined for all $v\in V$.
\begin{remark}{\em
We point out that  in the case  $T\in\closOP_L(V)$ the
$S$-resolvent operators have to be defined as
\begin{equation}
S_L^{-1}(s,T):= Q_s(T)(\overline{s} -T),\ \ \ \ s\in\rho_S(T),
\end{equation}
and the right $S$-resolvent operator is defined as
\begin{equation}\label{SresolvoperatorR}
S_R^{-1}(s,T):= \overline{s}Q_s(T)-TQ_s(T),\ \ \ \ s\in\rho_S(T).
\end{equation}
}
\end{remark}

The $S$-resolvent equation has been proved in \cite{acgs} when the operator $T$ is bounded.
 For the case of unbounded operators one has to be more careful with the domains of the operators  that are involved, see \cite{CG}.
The resolvent equation of the $S$-functional calculus and the following Lemma \ref{Lemma321}
 will be important in the proofs of some properties
of the $S$-functional calculus for operators of type $\omega$.
\begin{theorem}[$S$-resolvent equation, see \cite{CG}]
Let $T\in\closOP(V)$. For  $s,p \in  \rho_S(T)$ with $s\notin[p]$, it is
\begin{equation}\label{resEQ}
\begin{split}
S_R^{-1}(s,T)S_L^{-1}(p,T)v=&\big[[S_R^{-1}(s,T)-S_L^{-1}(p,T)]p
\\
&
-\overline{s}[S_R^{-1}(s,T)-S_L^{-1}(p,T)]\big](p^2-2s_0p+|s|^2)^{-1}v, \ \ \ v\in V.
\end{split}
\end{equation}
\end{theorem}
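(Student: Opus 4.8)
The plan is to establish \eqref{resEQ} by reducing it to a quaternionic Sylvester-type identity for the single bounded operator $W:=S_R^{-1}(s,T)S_L^{-1}(p,T)$ and then solving that identity explicitly; this is essentially the argument of \cite{CG}.

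First I would record two building blocks. The first is the pair of \emph{left} and \emph{right} $S$-resolvent equations for the closed operator $T$, i.e.\ the operator identities expressing $T\,S_L^{-1}(p,T)$ through $S_L^{-1}(p,T)$, the quaternion $p$ and $\id$, and $S_R^{-1}(s,T)\,T$ through $S_R^{-1}(s,T)$, the quaternion $s$ and $\id$. These follow by a direct computation from the definition \eqref{SresolvoperatorL} and the corresponding definition of $S_R^{-1}(s,T)$, from the defining relation $(T^2-2\Re(s)T+|s|^2\id)Q_s(T)=\id$ of the pseudo-resolvent, and from the facts that $Q_s(T)$ commutes with $T$ and that $p+\overline p=2\Re(p)$, $p\,\overline p=|p|^2$. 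The second building block is purely algebraic: since $s\notin[p]$, equivalently $p\notin[s]$, the quaternion $\Delta:=p^2-2\Re(s)p+|s|^2$ is nonzero, hence invertible in $\H$, and it commutes with $p$. Throughout the unbounded case I would watch the domains: $Q_p(T)$ maps $V$ into $\mathcal{D}(T^2)$, so $S_L^{-1}(p,T)$ maps $V$ into $\mathcal{D}(T)$ and the compositions with $T$ occurring in the left $S$-resolvent equation are everywhere defined (and bounded, by the closed graph theorem), whereas the right $S$-resolvent equation is asserted only on $\mathcal{D}(T)$.

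Next I would combine the two. Composing the right $S$-resolvent equation with $S_L^{-1}(p,T)$ on the right — legitimate because $\ran S_L^{-1}(p,T)\subseteq\mathcal{D}(T)$ — substituting the left $S$-resolvent equation in the form $T\,S_L^{-1}(p,T)=S_L^{-1}(p,T)p-\id$, and using associativity of composition to recognise $S_R^{-1}(s,T)\big(S_L^{-1}(p,T)p\big)=W p$, one is led to the Sylvester-type identity
\[
s\,W-W\,p\;=\;S_L^{-1}(p,T)-S_R^{-1}(s,T),
\]
holding as an operator identity on $V$. I would then solve it for $W$: a short computation using $s+\overline s=2\Re(s)$, $s\,\overline s=\overline s\,s=|s|^2$ and the definition of $\Delta$ shows that the operator on the right-hand side of \eqref{resEQ} satisfies exactly this identity; and for uniqueness, if an operator $Z$ satisfies $sZ=Zp$ then iteration gives $s^nZ=Zp^n$ for all $n$, hence $g(s)Z=Zg(p)$ for every polynomial $g$ with real coefficients, so that the choice $g(x)=x^2-2\Re(s)x+|s|^2$, for which $g(s)=s^2-2\Re(s)s+|s|^2=0$, yields $Z\Delta=0$ and therefore $Z=0$, $\Delta$ being invertible. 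Applying this to the difference of $W$ and the right-hand side of \eqref{resEQ} proves \eqref{resEQ}.

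The step I expect to be the main obstacle is the bookkeeping in the two previous paragraphs: one simultaneously handles a right-linear operator, left multiplications by the fixed quaternions $s$ and $\overline s$, multiplications by $p$, and — in the unbounded case — the domain of every composition, all inside a two-sided quaternionic module; this is exactly where the proof is more delicate than the bounded case of \cite{acgs}. Once the displayed Sylvester-type identity is available, the remainder is routine quaternionic algebra.
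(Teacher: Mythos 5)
The paper itself gives no proof of this theorem; it is quoted from \cite{CG}, so there is no internal argument to compare with, and your proposal must be judged on its own terms: it is correct. Its ingredients are exactly those of the cited proof (and of the bounded case in \cite{acgs}): the left equation $S_L^{-1}(p,T)p-TS_L^{-1}(p,T)=\id$ on $V$, the right equation $sS_R^{-1}(s,T)-S_R^{-1}(s,T)T=\id$ on $\mathcal{D}(T)$, the inclusion $\ran S_L^{-1}(p,T)\subset\mathcal{D}(T)$, and the invertibility of $\Delta:=p^2-2s_0p+|s|^2$ when $s\notin[p]$. Where you genuinely differ is the packaging: the proofs in \cite{acgs,CG} verify \eqref{resEQ} by a direct computation, in effect multiplying $S_R^{-1}(s,T)S_L^{-1}(p,T)$ by $\Delta$ and using the two resolvent identities, whereas you first isolate the Sylvester-type identity $sW-Wp=S_L^{-1}(p,T)-S_R^{-1}(s,T)$, check that the right-hand side of \eqref{resEQ} satisfies the same identity, and conclude by a uniqueness argument via the companion polynomial $g(x)=x^2-2\Re(s)x+|s|^2$, for which $g(s)=0$ and $g(p)=\Delta$ is invertible. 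This buys a cleaner conceptual statement (both sides are the unique solution of the Sylvester equation) at the cost of the extra uniqueness step; the direct verification is marginally shorter. In a complete write-up you should make two points explicit, both of which you correctly flagged as the delicate part: the commutation $Q_s(T)Tv=TQ_s(T)v$ for $v\in\mathcal{D}(T)$ needs the observation that $Q_s(T)v\in\mathcal{D}(T^3)$ for such $v$, and the everywhere-defined compositions such as $TQ_p(T)$ are bounded by the closed graph theorem, so all operators appearing in the Sylvester identity are in $\boundOP(V)$.
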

We recall the following lemma from \cite{acgs}.
\begin{lemma}\label{Lemma321} Let $B\in \mathcal{B}(V)$. Let $U$ be an axially symmetric s-domain and assume that
$f\in \mathcal{N}(U)$.
Then, for $p\in U$, we have
$$
\frac{1}{2\pi}\int_{\partial(U\cap\mathbb{C}_i)}f(s)ds_i
(\overline{s}B-Bp)(p^2-2s_0p+|s|^2)^{-1}=Bf(p),
$$
where $ds_i = -i\, ds$.
\end{lemma}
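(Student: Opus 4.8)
The plan is to reduce the identity to the right slice hyperholomorphic Cauchy formula of Theorem~\ref{Cauchy}, $f(p)=\frac{1}{2\pi}\int_{\partial(U\cap\C_i)}f(s)\,ds_i\,S_R^{-1}(s,p)$, where $S_R^{-1}(s,p)=(\overline s-p)\,\mathcal Q_s(p)$ and $\mathcal Q_s(p):=(p^2-2\Re(s)p+|s|^2)^{-1}$. First I would record two elementary observations that organize the noncommutative bookkeeping: (i) $\mathcal Q_s(p)$ is a rational expression in $p$ with \emph{real} coefficients, hence commutes with $p$, so $S_R^{-1}(s,p)=\overline s\,\mathcal Q_s(p)-\mathcal Q_s(p)\,p$; and (ii) since $B$ is right-linear it commutes with right multiplication by any quaternion, so inside the integrand $B$ can be transported past the rightmost factor $\mathcal Q_s(p)$. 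Because the claimed identity is $\mathbb R$-linear in $B$ and, apart from $B$, involves only the quaternions $f(s)\,ds_i\in\C_i$, $\overline s\in\C_i$, $p$ and $\mathcal Q_s(p)$, using the two-sided $\H$-module structure of $\mathcal B(V)$ it suffices to treat the scalar case $V=\H$ with $B$ equal to left multiplication by a quaternion $b$, i.e. to prove
\begin{equation}
\frac{1}{2\pi}\int_{\partial(U\cap\C_i)}f(s)\,ds_i\,(\overline s\,b-b\,p)\,\mathcal Q_s(p)=b\,f(p),\qquad b\in\H.\tag{$\ast$}
\end{equation}

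To prove $(\ast)$ I would split $b=b_1+b_2\,j$ with $b_1,b_2\in\C_i$ and $j\in\S$, $j\perp i$. The scalar $b_1$ commutes with every element of $\C_i$, in particular with $\overline s$ and with $f(s)\,ds_i$, so $(\overline s\,b_1-b_1\,p)\mathcal Q_s(p)=b_1(\overline s-p)\mathcal Q_s(p)=b_1\,S_R^{-1}(s,p)$ and $f(s)\,ds_i\,b_1\,S_R^{-1}(s,p)=b_1\,f(s)\,ds_i\,S_R^{-1}(s,p)$; thus $(\ast)$ for $b=b_1$ is the Cauchy formula multiplied by $b_1$, and pulling the $\C_i$-scalar $b_2$ through in the same way reduces what remains to $b=j$. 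For $b=j$ I would use the relations $\overline s\,j=j\,s$ and $(f(s)\,ds_i)\,j=j\,\overline{f(s)\,ds_i}$, valid because $\overline s$ and $f(s)\,ds_i$ lie in $\C_i$, together with $p\,\mathcal Q_s(p)=\mathcal Q_s(p)\,p$, to rewrite
\[ f(s)\,ds_i\,(\overline s\,j-j\,p)\,\mathcal Q_s(p) = f(s)\,ds_i\, j\,(s-p)\,\mathcal Q_s(p) = j\,\overline{f(s)\,ds_i}\,(s-p)\,\mathcal Q_s(p), \]
so the left-hand side of $(\ast)$ for $b=j$ equals $j\cdot\frac{1}{2\pi}\int_{\partial(U\cap\C_i)}\overline{f(s)\,ds_i}\,(s-p)\,\mathcal Q_s(p)$.

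Everything then reduces to the identity $\frac{1}{2\pi}\int_{\partial(U\cap\C_i)}\overline{f(s)\,ds_i}\,(s-p)\,\mathcal Q_s(p)=f(p)$, and this is the step that uses the hypotheses that $U$ is axially symmetric and $f\in\mathcal N(U)$ is intrinsic. I would prove it by the change of variables $s\mapsto\overline s$: since $U$ is axially symmetric, $\partial(U\cap\C_i)$ is invariant under conjugation in $\C_i$, which is orientation-reversing; using $\overline{f(s)}=f(\overline s)$ (because $f$ is intrinsic), $\overline{ds_i}=i\,d\overline s$, and $\mathcal Q_{\overline s}(p)=\mathcal Q_s(p)$ (because $\Re(\overline s)=\Re(s)$ and $|\overline s|=|s|$), the orientation reversal furnishes exactly the sign that converts $\overline{ds_i}=i\,d\overline s$ into $-i\,d\overline s$, so the integral becomes $\frac{1}{2\pi}\int_{\partial(U\cap\C_i)}f(s)\,ds_i\,(\overline s-p)\,\mathcal Q_s(p)=\frac{1}{2\pi}\int_{\partial(U\cap\C_i)}f(s)\,ds_i\,S_R^{-1}(s,p)$, which is $f(p)$ by Theorem~\ref{Cauchy}. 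This establishes $(\ast)$ and hence the lemma. The main obstacles I anticipate are entirely of bookkeeping type: organizing the reduction to $(\ast)$ carefully through the two-sided $\H$-module structure of $\mathcal B(V)$ so that $B$ ends up on the correct side, and verifying that the single sign produced by the orientation reversal in $s\mapsto\overline s$ exactly absorbs the discrepancy between $\overline{ds_i}$ and $ds_i$; the remaining manipulations are routine.
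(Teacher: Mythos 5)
The paper does not actually prove Lemma \ref{Lemma321} (it is recalled from \cite{acgs}), so your argument has to stand on its own. Your scalar identity $(\ast)$ and its proof are correct: for the $\C_i$-component of $b$ everything commutes with $f(s)\,ds_i$ and $\overline{s}$ and the right Cauchy formula applies, while for the component along $j\perp i$ the relations $\overline{s}\,j=j\,s$ and $cj=j\overline{c}$ for $c\in\C_i$, combined with the substitution $s\mapsto\overline{s}$ (which preserves $\partial(U\cap\C_i)$ by axial symmetry, reverses orientation, and uses $f(\overline{s})=\overline{f(s)}$ and the invariance of $(p^2-2s_0p+|s|^2)^{-1}$ under $s\mapsto\overline s$), do return $\frac{1}{2\pi}\int f(s)\,ds_i\,S_R^{-1}(s,p)=f(p)$ with the sign coming out right.

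The gap is the reduction of the operator statement to $(\ast)$, which is exactly where the content of the lemma beyond the Cauchy formula lies. Your observation (ii) is false under the convention in force here: for a right-linear $B\in\mathcal{B}(V)$ and a quaternion $q$, the product $Bq$ occurring in the lemma means $v\mapsto B(qv)$ (this is the only definition keeping $Bq$ right linear, and it is the reading under which the $S$-resolvent equation reduces to its scalar counterpart when $T$ is a left multiplication and under which the application in Theorem \ref{PRDRULE} works); hence the factor $(p^2-2s_0p+|s|^2)^{-1}$ standing to the right of $B$ acts on the vector by \emph{left} multiplication before $B$ is applied, and right linearity, i.e. $B(vq)=(Bv)q$, gives no commutation with it. Consequently ``$\mathbb{R}$-linearity in $B$ plus the two-sided module structure'' does not by itself justify testing only on $V=\mathbb{H}$ with $B$ a left multiplication. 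The reduction is in fact true, but it needs an argument: both sides have the form $v\mapsto\sum_k c_k\,B(d_k v)$ where $\sum_k c_k\otimes d_k$ is a fixed element of the finite-dimensional space $\mathbb{H}\otimes_{\mathbb{R}}\mathbb{H}$ (the contour integral is taken there), and the map $\mathbb{H}\otimes_{\mathbb{R}}\mathbb{H}^{\mathrm{op}}\to \mathrm{End}_{\mathbb{R}}(\mathbb{H})$, $c\otimes d\mapsto (x\mapsto cxd)$, is injective, so your identity $(\ast)$ for all $b$ forces that element to vanish and hence gives the statement for every $B$ and every $V$. Alternatively, and more directly, one can avoid the reduction altogether: only real scalars move across $B$, so write $(p^2-2s_0p+|s|^2)^{-1}=q_0(s)+q_1(s)i_p$ and $p\,(p^2-2s_0p+|s|^2)^{-1}=r_0(s)+r_1(s)i_p$ with $q_\ell,r_\ell$ real valued; the left-hand side becomes $cB+dBi_p$ with $c,d$ quaternionic contour integrals, and the Cauchy formula evaluated at $p$ and at $\overline{p}\in U$ (axial symmetry, together with $f(\overline{p})=\overline{f(p)}$ since $f$ is intrinsic) gives $c+di_p=f(p)$ and $c-di_p=\overline{f(p)}$, so $c,d$ are the real components of $f(p)$ and $cB+dBi_p=Bf(p)$. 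With either repair your write-up becomes a complete proof; as written, the decisive noncommutative step rests on a false commutation.
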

We can now consider the main definitions for the $H^\infty$ functional calculus for quaternionic operators.

\begin{definition}[Argument function]
Let $s\in\hh\setminus\{0\}$. We define $\arg(s)$ as the unique number $\theta \in [0,\pi]$ such that $s = |s| e^{\theta i_s}$.
\end{definition}
Observe that $\theta = \arg(s)$ does not depend on the choice of $i_s$ if $s\in\rr\setminus\{0\}$ since $p = |p|e^{0 i}$ for any $i\in\S$ if $p>0$ and $p = |p|e^{\pi i}$ for any $i\in\S$ if $p<0$.

Let $\vartheta \in [0, \pi]$ we  define the sets
$$
\mathcal{S}_\vartheta=\{ s\in \mathbb{H}\ |\ \ |\arg(p)|\leq \vartheta \ {\rm or}\  s=0\},
$$
\begin{equation}\label{esse0}
\mathcal{S}^0_\vartheta=\{ s\in \mathbb{H}\ |\ \ |\arg(p)|< \vartheta\}.
\end{equation}

\begin{definition}[Operator of type $\omega$]
Let $\omega\in [0,\pi)$ we say the the linear operator  $T:D(T)\subseteq V\to V$  is of type $\omega$ if
\begin{itemize}
\item[(i)]
$T$ is closed and densely defined
\item[(ii)]
$\sigma_S(T)\subset \mathcal{S}_\vartheta\cup\{\infty\}$
\item[(iii)]
for every $\vartheta\in (\omega,\pi]$ there exists a positive constant $C_\vartheta$ such that
$$
\|S_L^{-1}(s,T)\|\leq \frac{C_\vartheta}{|s|}\ \ {\rm for  \ all  \ non\ zero}\  s\in\mathcal{S}^0_\vartheta,
$$
$$
\|S_R^{-1}(s,T)\|\leq \frac{C_\vartheta}{|s|}\ \ {\rm for  \ all  \ non\ zero}\  s\in\mathcal{S}^0_\vartheta.
$$
\end{itemize}
\end{definition}
We now introduce the following subsets of the set of slice hyperholomorphic functions that consist of bounded slice hyperholomorphic functions.
\begin{definition}
Let $\mu\in (0,\pi]$. We set
$$
\mathcal{SH}_L^\infty(\mathcal{S}^0_\mu)=\{f\in \mathcal{SH}_L(\mathcal{S}^0_\mu)\ \ \ {\rm such \ that}\ \|f\|_\infty:=\sup_{s\in\mathcal{S}^0_\mu}|f(s)|<\infty \},
$$
$$
\mathcal{SH}_R^\infty(\mathcal{S}^0_\mu)=\{f\in \mathcal{SH}_R(\mathcal{S}^0_\mu)\ \ \ {\rm such \ that}\ \|f\|_\infty:=\sup_{s\in\mathcal{S}^0_\mu}|f(s)|<\infty \},
$$
$$
\mathcal{N}^\infty(\mathcal{S}^0_\mu):=\{f\in \mathcal{N}(\mathcal{S}^0_\mu)\ \ \ {\rm such \ that}\ \|f\|_\infty:=\sup_{s\in\mathcal{S}^0_\mu}|f(s)|<\infty\}.
$$
\end{definition}
In order to define bounded functions of operators of type $\omega$, we need to introduce suitable
subclasses of bounded slice hyperholomorphic functions:
\begin{definition} We define the spaces
$$
\Psi_L(\mathcal{S}^0_\mu)=\{f\in \mathcal{SH}_L^\infty(\mathcal{S}^0_\mu)\ {\it such \ that} \ \exists \ \alpha>0,\ c>0\
|f(s)|\leq \frac{c|s|^\alpha}{1+|s|^{2\alpha}},\ \ {\it for \ all}\ s\in \mathcal{S}^0_\mu\},
$$
$$
\Psi_R(\mathcal{S}^0_\mu)=\{f\in \mathcal{SH}_R^\infty(\mathcal{S}^0_\mu)\ {\it such \ that} \ \exists \ \alpha>0,\ c>0\
|f(s)|\leq \frac{c|s|^\alpha}{1+|s|^{2\alpha}},\ \ {\it for \ all}\ s\in \mathcal{S}^0_\mu\},
$$
$$
\Psi(\mathcal{S}^0_\mu)=\{f\in \mathcal{N}^\infty(\mathcal{S}^0_\mu)\ {\it such \ that} \ \exists \ \alpha>0,\ c>0\
|f(s)|\leq \frac{c|s|^\alpha}{1+|s|^{2\alpha}},\ \ {\it for \ all}\ s\in \mathcal{S}^0_\mu\}.
$$
\end{definition}
The following theorem is a crucial step for the definition of the $S$-functional calculus, because it shows that the following integrals  depend neither on the path that we choose nor on the complex plane $\mathbb{C}_i$, $i\in \mathbb{S}$.
\begin{theorem}\label{theoindep}
Let $T$ be an operator of type $\omega$.
Let $i\in\S$, and let $\mathcal{S}^0_\mu$ be as in \eqref{esse0}. Choose a piecewise smooth path $\Gamma$ in $\mathcal{S}^0_\mu\cap\C_i$ that goes from $\infty e^{i\theta}$ to $\infty e^{-i\theta}$, where $\omega <\theta <\mu$. Then the integrals
\begin{equation}\label{PsiTL}
\frac{1}{2\pi}\int_{\Gamma}S_L^{-1}(s,T) \, ds_i\, \psi(s),\ \ \ \ {\it for\ all}\ \ \ \psi\in \Psi_L(\mathcal{S}^0_\mu),
\end{equation}
\begin{equation}\label{PsiTR}
 \frac{1}{2\pi}\int_{\Gamma} \psi(s)\, ds_i\, S_R^{-1}(s,T),\ \ \ \ {\it for\ all}\ \ \ \psi\in \Psi_R(\mathcal{S}^0_\mu),
\end{equation}
 depend neither on $\Gamma$ nor on $i\in \mathbb{S}$, and they define bounded operators.
\end{theorem}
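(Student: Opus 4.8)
The plan is to prove three things about the integral \eqref{PsiTL}, from which the statement for \eqref{PsiTR} follows symmetrically (exchanging the right slice hyperholomorphic map $s\mapsto S_R^{-1}(s,T)$ for $s\mapsto S_L^{-1}(s,T)$, using that $\psi\in\Psi_R(\mathcal{S}^0_\mu)$ is right slice hyperholomorphic and $S_R^{-1}(s,q)=-S_L^{-1}(q,s)$): (a) the integral converges absolutely in operator norm, hence defines a bounded operator on $V$; (b) its value does not change if $\Gamma$ is replaced by another admissible path in the same plane $\C_i$; (c) its value does not depend on $i\in\S$.

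For (a), I would first fix $\vartheta$ with $\theta<\vartheta<\mu$, so that $\Gamma$ lies in $\mathcal{S}^0_\vartheta$ and the estimate $\|S_L^{-1}(s,T)\|\le C_\vartheta/|s|$ from the definition of an operator of type $\omega$ is available along $\Gamma$. Since $\psi\in\Psi_L(\mathcal{S}^0_\mu)$ there are $\alpha,c>0$ with $|\psi(s)|\le c|s|^\alpha/(1+|s|^{2\alpha})$, so the norm of the integrand in \eqref{PsiTL} is bounded by
\[
\|S_L^{-1}(s,T)\|\,|\psi(s)|\;\le\;\frac{C_\vartheta\, c\,|s|^{\alpha-1}}{1+|s|^{2\alpha}}.
\]
As $\Gamma$ is bounded away from $0$ and runs to infinity along the two rays $|s|=r$, the integral is dominated by a constant times $\int_{r_0}^{\infty}r^{-\alpha-1}\,dr<\infty$, so the Bochner integral converges absolutely in $\boundOP(V)$.

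For (b), let $\Gamma_1,\Gamma_2$ be two admissible paths in $\mathcal{S}^0_\mu\cap\C_i$; their opening angles exceed $\omega$, so, since $\sigma_S(T)\subset\mathcal{S}_\omega\cup\{\infty\}$, both enclose $\sigma_S(T)\cap\C_i$ in the same way. Recalling that $s\mapsto S_L^{-1}(s,T)$ is right slice hyperholomorphic on $\rho_S(T)$ and $\psi$ is left slice hyperholomorphic on $\mathcal{S}^0_\mu$, I would truncate at radius $R$ and close up the bounded region $D_i$ between (the truncations of) $\Gamma_1$ and $\Gamma_2$ with a circular arc, so that $\overline{D_i}\subset\rho_S(T)\cap\mathcal{S}^0_\mu\cap\C_i$. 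Cauchy's integral theorem then gives $\int_{\partial D_i}S_L^{-1}(s,T)\,ds_i\,\psi(s)=0$, and letting $R\to\infty$ the arc contribution is $O\!\big(R\cdot R^{-\alpha-1}\big)=O(R^{-\alpha})\to0$ by the estimate of step (a). Hence the integrals over $\Gamma_1$ and over $\Gamma_2$ coincide.

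The main obstacle is (c). Using (b), I would compare the integral over a path $\Gamma_i\subset\C_i$ with that over the ``matched'' path $\Gamma_j\subset\C_j$ obtained by replacing $i$ with $j$ in the parametrisation. To see they agree, one expresses the two factors at a point $u+iv\in\Gamma_i$ through their values at $u\pm jv$ via the Representation Formula (Theorem \ref{formulaMON}: formula \eqref{distributionright} for the right slice hyperholomorphic, operator-valued function $s\mapsto S_L^{-1}(s,T)$ and formula \eqref{distribution} for the left slice hyperholomorphic function $\psi$); substituting these, recalling $ds_i=-i\,ds$ and simplifying with $i^2=-1$, the integrand over $\Gamma_i$ is rewritten so as to reproduce exactly the integrand over $\Gamma_j$. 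This is the same computation that establishes the independence of $\C_i$ in the Cauchy formula (Theorem \ref{Cauchy}) and in the $S$-functional calculus for bounded operators, see \cite{MR2752913}; the only new ingredient here, for operators of type $\omega$ and paths running to infinity, is the absolute convergence of all the intermediate integrals, which is guaranteed by step (a). Combining (b) and (c) shows that \eqref{PsiTL} depends neither on $\Gamma$ nor on $i\in\S$, and by (a) it is a bounded operator.
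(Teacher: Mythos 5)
Your steps (a) and (b) are correct and are essentially what the paper does: the paper disposes of boundedness with the same growth estimates and of the independence of $\Gamma$ inside a fixed plane $\C_i$ with Cauchy's integral theorem; your truncation at radius $R$ with the $O(R^{-\alpha})$ arc estimate is the right way to make the latter precise. (A small point: $\Gamma$ need not be bounded away from $0$, but this is harmless since $\|S_L^{-1}(s,T)\|\,|\psi(s)|\lesssim |s|^{\alpha-1}$ is integrable at the origin.)

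The gap is in step (c). Substituting the Representation Formula for $s\mapsto S_L^{-1}(s,T)$ and for $\psi$ does \emph{not} make the integrand on $\Gamma_i$ ``reproduce exactly the integrand on $\Gamma_j$'' pointwise. Writing $S_L^{-1}(u+iv,T)=\alpha+\beta i$ and $\psi(u+iv)=\gamma+i\delta$ with $\alpha,\beta,\gamma,\delta$ built from the values at $u\pm jv$, and $ds_i=-i(du+i\,dv)$, the expansion of $(\alpha+\beta i)(-i\,du+dv)(\gamma+i\delta)$ leaves cross terms such as $\alpha i\gamma\,du$ and $\beta i\delta\,du$ in which $i$ sits between factors that do not commute with it; these do not cancel pointwise and cannot be identified with the corresponding expression on $\Gamma_j$ (already for $f\equiv g\equiv 1$ the two ``integrands'' $-i\,du+dv$ and $-j\,du+dv$ differ). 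What is true is only that the \emph{integrals} coincide, and proving it along your lines requires a contour symmetric under $u+iv\mapsto u-iv$ and a genuine computation showing that the $i$-dependent cross terms cancel after integration; you neither carry this out nor can you import it from Theorem \ref{Cauchy}, where independence of $i$ is automatic because both sides equal $f(q)$, nor from the bounded case in \cite{MR2752913} as a black box, because here the contour is unbounded and closing it at infinity is precisely the issue. The paper takes a different route that avoids the componentwise computation: it encloses the spectrum in two nested sectors $U_p\subset U_s$, writes the scalar function on $\partial(U_s\cap\C_i)$ by the Cauchy formula as an integral over $\partial(U_p\cap\C_j)$, interchanges the two integrals by Fubini (using exactly your estimates from (a)), and then evaluates the inner integral of the Cauchy kernel against the $S$-resolvent by the $S$-functional calculus for unbounded operators, using $S_R^{-1}(p,\infty)=0$, which produces the $S$-resolvent on the plane $\C_j$ and hence the representation of the operator there. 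Either supply the symmetric-contour cancellation in detail or replace step (c) by this Cauchy--Fubini argument.
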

\begin{proof}
We reason on the integral (\ref{PsiTL}) since (\ref{PsiTR}) can be treated in a similar way.

The growth estimates on $\psi$ and on the resolvent operator imply that the integral (\ref{PsiTL}) exists
and defines a bounded right-linear operator.

 The independence of the choice of $\theta$ and of the choice of the path $\Gamma$ in the complex plane $\C_i$ follows from Cauchy's integral theorem.

In order to show that the integral  (\ref{PsiTL}) is independent of the choice of the imaginary unit $i\in\S$,
we take an arbitrary $j\in\S$ with $j\neq i$.
\\
Let $B(0,r)$ the ball centered at the origin with radius $r$;
let $a_0 > 0 $ and $\theta_0\in(0,\pi)$, $n\in\N$, we define the sector
$\Sigma(\theta_0, a_0)$ as
\[\Sigma(\theta_0, a_0) := \{s\in\hh: \arg(s-a_n)\geq \theta_n\}.
\]

 Let $\theta_0 < \theta_s < \theta_p <\pi$ and set $U_s := \Sigma(\theta_s, 0)\cup B(0,a_0/2)$
 and
 $U_p:= \Sigma(\theta_p, 0)\cup B(0,a_0/3)$,

  where the indices $s$ and $p$ denote the variable of integration over the boundary of the respective set.

    Suppose that $U_p$ and $U_s$ are  slice domains and $\partial(U_s\cap\C_i)$ and $\partial(U_p\cap\C_j)$ are paths that are contained in the sector.

     Observe that $\psi(s)$ is right slice hyperholomorphic on $\overline{U_p}$, and hence, by Theorem~\ref{Cauchy}, we have
\begin{align}
\psi(T) &= \frac{1}{2\pi}\int_{\partial(U_s\cap\C_I)}\psi(s) \, ds_i\, S_R^{-1}(s,T)
 \\
&
= \frac{1}{(2\pi)^2}\int_{\partial(U_s\cap\C_i)}\left(\int_{\partial(U_p\cap\C_j)}\psi(p)\, dp_j\, S_R^{-1}(p,s)\right)\, ds_i\, S_R^{-1}(s,T)
\\
&
= \frac{1}{2\pi}\int_{\partial(U_p\cap\C_i)}\psi(p)\, dp_j \left(\frac{1}{2\pi}\int_{\partial(U_s\cap\C_i)}\, S_R^{-1}(p,s)\, ds_i\, S_R^{-1}(s,T)\right)
\\
&
=  \frac{1}{2\pi}\int_{\partial(U_p\cap\C_j)}\psi(p)\, dp_j\, S_R^{-1}(p,T).
\end{align}
To  exchange order of integration we apply the Fubini theorem.
 The last equation follows as an application of the $S$-functional calculus for unbounded operators,
 see  \cite[Theorem 4.16.7]{MR2752913}, since $S_R^{-1}(p,\infty) = \lim_{s\to\infty} S_R^{-1}(p,s) = 0$. So we get the statement.

\end{proof}

Thanks to the above theorem the following definitions are well posed.

\begin{definition}[The $S$-functional calculus for operators of type $\omega$]\label{psiT}
Let $T$ be an operator of type $\omega$.
Let $i\in\S$, and let $\mathcal{S}^0_\mu$ be the sector defined above. Choose a piecewise smooth path $\Gamma$ in $\mathcal{S}^0_\mu\cap\C_i$ that goes from $\infty e^{i\theta}$ to $\infty e^{-i\theta}$, for $\omega <\theta <\mu$,  then
\begin{equation}\label{PsiTLmon}
\psi(T):= \frac{1}{2\pi}\int_{\Gamma}S_L^{-1}(s,T) \, ds_i\, \psi(s),\ \ \ \ {\it for\ all }\ \ \ \psi\in \Psi_L(\mathcal{S}^0_\mu),
\end{equation}\begin{equation}\label{PsiTRmon}
\psi(T):= \frac{1}{2\pi}\int_{\Gamma} \psi(s)\, ds_i\, S_R^{-1}(s,T),\ \ \ \ {\it for\ all }\ \ \ \psi\in \Psi_R(\mathcal{S}^0_\mu).
\end{equation}
\end{definition}
\begin{remark}{\rm
For functions  $\psi$ that belong to $\Psi(\mathcal{S}^0_\mu)$ both  representations can be used, moreover
$$
\psi(T):= \frac{1}{2\pi}\int_{\Gamma} \psi(s)\, ds_i\, S_R^{-1}(s,T)=\frac{1}{2\pi}\int_{\Gamma}S_L^{-1}(s,T) \, ds_i\, \psi(s),
\ \ \ \ {\it for\ all }\ \ \ \psi\in \Psi(\mathcal{S}^0_\mu).
$$
}
\end{remark}
If $T$ is an operator of type $\omega$, then
 $\psi(T)$,  defined in (\ref{PsiTLmon}) and (\ref{PsiTRmon}), satisfy:
 $$(a\psi+b\varphi)(T)=a\psi(T)+b\varphi(T),\ \ \ \ {\it for \ all}\ \ \ \psi,\varphi\in \Psi_L(\mathcal{S}^0_\mu),$$
$$(a\psi+b\varphi)(T)=a\psi(T)+b\varphi(T),\ \ \ \ {\it for \ all}\ \ \ \psi,\varphi\in \Psi_R(\mathcal{S}^0_\mu).$$
These equalities can be verified with standard computations.
\begin{theorem}\label{PRDRULE} Let $T$ be an operator of type $\omega$. Then
$$ (\psi\varphi)(T)=\psi(T)\varphi(T),\ \ \ \ {\it for \ all}\ \ \ \psi\in \Psi(\mathcal{S}^0_\mu),\ \ \ \varphi\in \Psi_L(\mathcal{S}^0_\mu),
$$
$$ (\psi\varphi)(T)=\psi(T)\varphi(T),\ \ \ \ {\it for \ all}\ \ \ \psi\in \Psi_R(\mathcal{S}^0_\mu),\ \ \ \varphi\in \Psi(\mathcal{S}^0_\mu).$$
\end{theorem}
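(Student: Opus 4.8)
The plan is to mimic the classical McIntosh proof of the multiplicativity of the $\Psi$-calculus (see \cite{McI1} and \cite{Haase}), using the $S$-resolvent equation \eqref{resEQ} in place of the complex resolvent identity and Lemma \ref{Lemma321} in place of the Cauchy integral formula. I describe the first identity in detail; the second is obtained by an analogous argument, with the roles of left and right slice hyperholomorphicity exchanged (using the right-hand counterpart of Lemma \ref{Lemma321} and of the commutation used below).

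Fix $i\in\S$ and angles $\omega<\theta_p<\theta_s<\mu$, with associated piecewise smooth paths $\Gamma_p,\Gamma_s$ in $\mathcal{S}^0_\mu\cap\C_i$ running from $\infty e^{i\theta}$ to $\infty e^{-i\theta}$. Since $\psi$ is intrinsic it is also right slice hyperholomorphic, so by Theorem \ref{theoindep} and Definition \ref{psiT},
\[
\psi(T)=\frac{1}{2\pi}\int_{\Gamma_s}\psi(s)\,ds_i\,S_R^{-1}(s,T),\qquad \varphi(T)=\frac{1}{2\pi}\int_{\Gamma_p}S_L^{-1}(p,T)\,dp_i\,\varphi(p).
\]
The growth bounds on $\psi,\varphi$, the estimates $\|S_L^{-1}(p,T)\|\le C_\theta|p|^{-1}$ and $\|S_R^{-1}(s,T)\|\le C_\theta|s|^{-1}$, and the inequality $|p^2-2s_0p+|s|^2|\ge c_0(|p|^2+|s|^2)$, valid on $\Gamma_s\times\Gamma_p$ because the two paths have distinct arguments, make the iterated integral
\[
\psi(T)\varphi(T)=\frac{1}{(2\pi)^2}\int_{\Gamma_s}\int_{\Gamma_p}\psi(s)\,ds_i\,S_R^{-1}(s,T)S_L^{-1}(p,T)\,dp_i\,\varphi(p)
\]
absolutely convergent, so Fubini's theorem applies. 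Inserting the $S$-resolvent equation \eqref{resEQ} (admissible since $\Gamma_s$ and $\Gamma_p$ are disjoint and lie on distinct spheres) writes $S_R^{-1}(s,T)S_L^{-1}(p,T)$ as the difference of a part built from $S_R^{-1}(s,T)$ and a part built from $S_L^{-1}(p,T)$, each multiplied on the right by $(p^2-2s_0p+|s|^2)^{-1}$; accordingly $\psi(T)\varphi(T)=\mathcal J_R-\mathcal J_L$.

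For $\mathcal J_L$ I would integrate over $s$ first: there $S_L^{-1}(p,T)$ is a fixed bounded operator and the integrand has exactly the form covered by Lemma \ref{Lemma321} with $B=S_L^{-1}(p,T)$ and $f=\psi\in\mathcal{N}(\mathcal{S}^0_{\theta_s})$. Since $p\in\Gamma_p$ has argument $\theta_p<\theta_s$ it lies in the sector enclosed by $\Gamma_s$, so the lemma gives $\frac{1}{2\pi}\int_{\Gamma_s}\psi(s)\,ds_i\,(\overline s\,S_L^{-1}(p,T)-S_L^{-1}(p,T)p)(p^2-2s_0p+|s|^2)^{-1}=S_L^{-1}(p,T)\psi(p)$, whence $\mathcal J_L=-\frac{1}{2\pi}\int_{\Gamma_p}S_L^{-1}(p,T)\psi(p)\,dp_i\,\varphi(p)$. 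Because $\psi$ is intrinsic, $\psi(p)\in\C_i$ for $p\in\Gamma_p\subset\C_i$, so $\psi(p)$ commutes with $dp_i=-i\,dp$, and the restriction of the slice product $\psi\star\varphi$ to $\C_i$ is the ordinary product $\psi_i\varphi_i$; hence $\mathcal J_L=-\frac{1}{2\pi}\int_{\Gamma_p}S_L^{-1}(p,T)\,dp_i\,(\psi\varphi)(p)$. For $\mathcal J_R$ I would integrate over $p$ first: $S_R^{-1}(s,T)$ does not depend on $p$, and the $p$-dependent factors reassemble into the right Cauchy kernel $-S_R^{-1}(s,p)=S_L^{-1}(p,s)$, which is slice hyperholomorphic in $p$ throughout the sector enclosed by $\Gamma_p$ since its singular sphere $[s]$ has argument $\theta_s>\theta_p$ and so lies outside that sector; pairing with the left slice hyperholomorphic $\varphi$ and invoking Cauchy's integral theorem (applied on the truncated sector and passing to the limit at $0$ and at $\infty$, the error tending to zero by the decay of $\varphi$ and of the kernel) gives $\mathcal J_R=0$. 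Finally $\psi\varphi=\psi\star\varphi$ is left slice hyperholomorphic with $|(\psi\varphi)(p)|\le\|\psi\|_\infty\,c|p|^{\beta}(1+|p|^{2\beta})^{-1}$, so $\psi\varphi\in\Psi_L(\mathcal{S}^0_\mu)$ and
\[
\psi(T)\varphi(T)=\mathcal J_R-\mathcal J_L=\frac{1}{2\pi}\int_{\Gamma_p}S_L^{-1}(p,T)\,dp_i\,(\psi\varphi)(p)=(\psi\varphi)(T)
\]
by Definition \ref{psiT}.

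I expect the main obstacle to be the rigorous handling of these unbounded contour integrals: justifying the Fubini interchange, and the applications of Cauchy's integral theorem and of Lemma \ref{Lemma321} on the unbounded sectors $\mathcal{S}^0_{\theta_p}$ and $\mathcal{S}^0_{\theta_s}$ (with care near the vertex $0$, where the vanishing of $\psi$ and $\varphi$ is used), together with — the most delicate point — the bookkeeping of the two-sided quaternionic scalar multiplications in the term $\mathcal J_R$ that allows the fixed operator $S_R^{-1}(s,T)$ to be separated from the scalar Cauchy kernel before the $p$-integration is carried out. All of these are somewhat technical but routine adaptations of the corresponding steps in the $S$-functional calculus for bounded operators.
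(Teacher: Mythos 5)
Your proposal follows essentially the same route as the paper's proof: represent the intrinsic $\psi$ through the right $S$-resolvent and $\varphi$ through the left one, insert the $S$-resolvent equation \eqref{resEQ}, kill the terms containing $S_R^{-1}(s,T)$ by Cauchy's integral theorem in the variable $p$, and evaluate the terms containing $S_L^{-1}(p,T)$ via Lemma \ref{Lemma321} (with $B=S_L^{-1}(p,T)$), reassembling the result into $(\psi\varphi)(T)$. The only small adjustment is in $\mathcal J_R$: since $S_R^{-1}(s,T)$ does not commute with $\overline{s}$ you cannot literally recombine the scalars into the kernel $S_R^{-1}(s,p)$, but, exactly as in the paper, it suffices to apply Cauchy's theorem separately to the two intrinsic scalar functions $p\mapsto p(p^2-2s_0p+|s|^2)^{-1}\varphi(p)$ and $p\mapsto (p^2-2s_0p+|s|^2)^{-1}\varphi(p)$ (the fixed operator factors, $S_R^{-1}(s,T)$ and $\overline{s}S_R^{-1}(s,T)$, standing to the left of all $p$-dependence), which gives $\mathcal J_R=0$ just the same.
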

\begin{proof} We prove the first relation the second one follows with similar computations.
Let $\sigma_{S}(T) \subset U_1$ and $U_2$ be two open sectors that contain the $S$-spectrum  of $T$ and such that
$U_1 \cup \partial U_1 \subset U_2 $ and $U_2 \cup \partial U_2\subset\mathcal{S}^0_\mu$.
Take $p\in \partial (U_1\cap \mathbb{C}_i)$ and $s\in \partial (U_2\cap \mathbb{C}_i)$ and observe that, for $i\in \mathbb{S}$,  the $S$-resolvent equation \eqref{resEQ} implies
\[
\begin{split}
 \psi(T)\varphi(T)&=
 {{1}\over{(2\pi)^2 }} \int_{\partial (U_2\cap \mathbb{C}_i)} \  \psi(s)\ ds_i \ S_R^{-1} (s,T)
  \int_{\partial (U_1\cap \mathbb{C}_i)} \ S_L^{-1} (p,T)\ dp_i \  \varphi(p)
 \\
&=\frac{1}{(2\pi)^2 }\int_{ \partial ( U_2 \cap \mathbb{C}_i) } \psi(s)\ ds_i \int_{ \partial ( U_1 \cap \mathbb{C}_i) }
S_R^{-1}(s,T)p(p^2-2s_0p+|s|^2)^{-1}dp_i\  \varphi(p)
 \\
&-\frac{1}{(2\pi)^2 }\int_{ \partial ( U_2 \cap \mathbb{C}_i) } \psi(s)\ ds_i \int_{ \partial ( U_1 \cap \mathbb{C}_i) }
S_L^{-1}(p,T)p(p^2-2s_0p+|s|^2)^{-1}dp_i\  \varphi(p)
\\
&
-\frac{1}{(2\pi)^2 }\int_{ \partial ( U_2 \cap \mathbb{C}_i) } \psi(s)\ ds_i
\int_{ \partial ( U_1 \cap \mathbb{C}_i) }\overline{s}S_R^{-1}(s,T)(p^2-2s_0p+|s|^2)^{-1}
 dp_i\ \varphi(p)
 \\
&
+\frac{1}{(2\pi)^2 }\int_{ \partial ( U_2 \cap \mathbb{C}_i) } \psi(s)\ ds_i
\int_{ \partial ( U_1 \cap \mathbb{C}_i) }\overline{s}S_L^{-1}(p,T)(p^2-2s_0p+|s|^2)^{-1}
 dp_i\  \varphi(p).
 \end{split}
\]
But since the functions
$$
p\mapsto p(p^2-2s_0p+|s|^2)^{-1}\varphi(p)$$
and
$$
p\mapsto (p^2-2s_0p+|s|^2)^{-1}\varphi(p)
$$
are holomorphic on an open set that contains $\overline{G_{1}\cap\C_i}$, Cauchy's integral theorem implies
$$
\frac{1}{(2\pi)^2 }\int_{ \partial ( U_2 \cap \mathbb{C}_i) } \psi(s)\ ds_i \int_{ \partial ( U_1 \cap \mathbb{C}_i) }
S_R^{-1}(s,T)p(p^2-2s_0p+|s|^2)^{-1}dp_i\  \varphi(p)=0
$$
and
$$
-\frac{1}{(2\pi)^2 }\int_{ \partial ( U_2 \cap \mathbb{C}_i) } \psi(s)\ ds_i
\int_{ \partial ( U_1 \cap \mathbb{C}_i) }\overline{s}S_R^{-1}(s,T)(p^2-2s_0p+|s|^2)^{-1}
 dp_i\  \varphi(p)=0.
$$
It follows that
\[
\begin{split}
\psi(T)\varphi(T)&=
-\frac{1}{(2\pi)^2 }\int_{ \partial ( U_2 \cap \mathbb{C}_i) } \psi(s)\ ds_i \int_{ \partial ( U_1 \cap \mathbb{C}_i) }
S_L^{-1}(p,T)p(p^2-2s_0p+|s|^2)^{-1}dp_i\  \varphi(p)
\\
&
+\frac{1}{(2\pi)^2 }\int_{ \partial ( U_2 \cap \mathbb{C}_i) } \psi(s)\ ds_i
\int_{ \partial ( U_1 \cap \mathbb{C}_i) }\overline{s}S_L^{-1}(p,T)(p^2-2s_0p+|s|^2)^{-1}
 dp_i\  \varphi(p),
 \end{split}
\]
which can be written as
\[
\begin{split}
 \psi(T)\varphi(T)=
 \frac{1}{(2\pi)^2 }\int_{ \partial ( U_2 \cap \mathbb{C}_i) } \psi(s)\ ds_i &\int_{ \partial ( U_1 \cap \mathbb{C}_i) }
[\overline{s}S_L^{-1}(p,T)-S_L^{-1}(p,T)p]\times
\\
&
\times(p^2-2s_0p+|s|^2)^{-1}dp_i\  \varphi(p).
 \end{split}
\]
Using Lemma \ref{Lemma321} we get
\begin{align*}
\psi(T)\varphi(T)&=\frac{1}{2\pi } \int_{ \partial ( U_1 \cap \mathbb{C}_i) }S_L^{-1}(p,T)dp_i \ \psi(p)\  \varphi(p)\\
 &=\frac{1}{2\pi } \int_{ \partial ( U_1 \cap \mathbb{C}_i) }S_L^{-1}(p,T)dp_i (\psi\varphi)(p)\ =(\psi\varphi)(T)
 \end{align*}
 which gives the statement.

\end{proof}

\begin{remark}
We point out that the spectral mapping theorem holds for this functional calculus. Precisely:
if $\psi\in \Psi(\mathcal{S}^0_\mu)$, then $\psi(\sigma_S(T))=\sigma_S(\psi(T))$.
If is important to observe that the spectral mapping theorem holds just for intrinsic functions.
The analogue convergence theorem at p. 216 in the paper of A. McIntosh \cite{McI1} holds also here, in fact it is based on the principle of uniform boundedness which holds also for quaternionic operators. We will not give the proof of this theorem because it is very similar to that classical case.
\end{remark}

\section{The $H^\infty$ functional calculus based on the $S$-spectrum}

To define the $H^\infty$ functional calculus we suppose that $T$ is an operator of type $ \omega$ and moreover we assume that it is one-to-one and with dense range. Here we will consider slice hyperholomorphic functions defined on the open sector $\mathcal{S}^0_\mu$, for  $0\leq \omega <\mu\leq \pi$ which can grow at infinity as $|s|^k$ and at the origin as $|s|^{-k}$ for $k\in \mathbb{N}$. This enlarges the class of functions to which the functional calculus can be applied. Precisely we define:

\begin{definition}[The set $\Omega$]
Let $\omega$ be a real number such that $0\leq \omega \leq \pi$. We denote by $\Omega$ the set of linear operators $T$ acting on a two sided quaternionic Banach space such that:
\begin{itemize}
\item[(i)] $T$ is a linear operator of type $\omega$;
\item[(ii)] $T$ is one-to-one and with dense range.
\end{itemize}
\end{definition}
Then we define the following function spaces according to the set of operators defined above:
\begin{definition}
Let $\omega$ and $\mu$ be  real numbers such that $0\leq \omega < \mu\leq \pi$, we set
$$
\mathcal{F}_L(\mathcal{S}^0_\mu)=\{f\in \mathcal{SH}_L(\mathcal{S}^0_\mu)\ \ \ {\it such \ that}\
|f(s)|\leq C(|s|^k+|s|^{-k})\ \ {\it for\ some}\ \ k >0\  {\it and}\  C>0 \},
$$
$$
\mathcal{F}_R(\mathcal{S}^0_\mu)=\{f\in \mathcal{SH}_R(\mathcal{S}^0_\mu)\ \ \ {\it such \ that}\
|f(s)|\leq C(|s|^k+|s|^{-k})\ \ {\it for\ some}\ \ k>0\  {\it and}\  C>0 \}.
$$
$$
\mathcal{F}(\mathcal{S}^0_\mu)=\{f\in \mathcal{N}(\mathcal{S}^0_\mu)\ \ \ {\it such \ that}\
|f(s)|\leq C(|s|^k+|s|^{-k})\ \ {\it for\ some}\ \ k>0\  {\it and}\  C>0 \}.
$$
\end{definition} To extend the functional calculus
we consider a quaternionic two sided Banach space $V$, the operators in the class $\Omega$, and:
\begin{itemize}
  \item
  The non commutative algebra $\mathcal{F}_L(\mathcal{S}^0_\mu)$ (resp. $\mathcal{F}_R(\mathcal{S}^0_\mu)$).
  \item
  The $S$-functional calculus $\Phi$ for operators of type $\omega$
 $$
 \Phi:\Psi_L(\mathcal{S}^0_\mu) \ ({\rm resp.}\  \Psi_R(\mathcal{S}^0_\mu))   \to \mathcal{B}(V), \ \ \  \Phi: \phi\to \phi(T).
 $$
\item
  The commutative subalgebra of $\mathcal{F}_L(\mathcal{S}^0_\mu)$ consisting of intrinsic rational functions.
\item
The functions in $\mathcal{F}_L(\mathcal{S}^0_\mu)$ have at most polynomial growth.
So taken an intrinsic rational functions $\psi$ the operator $\psi(T)$ can be defined by the rational functional calculus. We assume also that  $\psi(T)$  is  injective.
\end{itemize}
\begin{definition}[$H^\infty$ functional calculus]
Let $V$ be a two-sided quaternionic Banach space and let $T\in\Omega$.
For $k\in \mathbb{N}$ consider the function
$$
\psi(s):=\Big(\frac{s}{1+s^2}\Big)^{k+1}.
$$
For $f\in \mathcal{F}_L(\mathcal{S}^0_\mu)$  we define the extended functional calculus as
\begin{equation}\label{HinftyFC}
f(T):=(\psi(T))^{-1}(\psi f)(T).
\end{equation}
 For $f\in \mathcal{F}_R(\mathcal{S}^0_\mu)$, and $T$ left linear we define the extended functional calculus as
\begin{equation}\label{HinftyFC2}
f(T):=( f\psi)(T)(\psi(T))^{-1}.
\end{equation}
We say that $\psi$ regularizes $f$.
\end{definition}

\begin{remark} In the previous definition
the operator $(\psi f)(T)$ (resp. $(f\psi )(T)$) is defined using the $S$-functional calculus $\Phi$ for operators of type $\omega$, and $\psi(T)$ is defined by the rational functional calculus.
\end{remark}
\begin{theorem}
The definition of the functional calculus  in (\ref{HinftyFC}) and in (\ref{HinftyFC2}) does not depend on the choice of the intrinsic rational slice hyperholomorphic function $\psi$.
\end{theorem}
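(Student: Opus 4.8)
The proof is the quaternionic analogue of McIntosh's regularization argument from \cite{McI1} (see also \cite{Haase}), and it suffices to treat \eqref{HinftyFC}, the case \eqref{HinftyFC2} following by the obvious left--right symmetric modifications (keeping the $S$-resolvent factors on the appropriate side). Let $\psi_1$ and $\psi_2$ be two intrinsic rational slice hyperholomorphic functions that both regularize $f$, i.e.\ $\psi_\ell(T)$ is injective and $\psi_\ell f\in\Psi_L(\mathcal{S}^0_\mu)$ for $\ell=1,2$; for the functions $\psi(s)=(s/(1+s^2))^{k+1}$ appearing in the definition this is exactly the requirement that $k$ be large enough to absorb the growth of $f$ at $0$ and at $\infty$, and these functions are intrinsic, bounded and decay like a power at both ends, hence they lie in $\Psi(\mathcal{S}^0_\mu)$. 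We must show $(\psi_1(T))^{-1}(\psi_1 f)(T)=(\psi_2(T))^{-1}(\psi_2 f)(T)$.

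The first step is to note that $\psi_1\psi_2$ (which coincides with $\psi_1\star\psi_2=\psi_2\star\psi_1$, the two factors being intrinsic rational) again regularizes $f$: by the rational functional calculus, Propositions \ref{RATUNO} and \ref{propratio}, one has $\psi_1(T)\psi_2(T)=(\psi_1\psi_2)(T)=\psi_2(T)\psi_1(T)$, which is injective as a composition of injective operators, and $\psi_1\psi_2 f=\psi_1\,(\psi_2 f)\in\Psi_L(\mathcal{S}^0_\mu)$ because $\psi_2 f\in\Psi_L(\mathcal{S}^0_\mu)$ and multiplication by the intrinsic rational factor $\psi_1$ preserves left slice hyperholomorphy and, for the regularizers under consideration, the decay at $0$ and $\infty$. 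The key identity to establish is then
\[
\psi_2(T)\,(\psi_1 f)(T)=(\psi_1\psi_2 f)(T)=\psi_1(T)\,(\psi_2 f)(T),
\]
where the two outer equalities are the product rule of Theorem \ref{PRDRULE} applied with the intrinsic $\psi_2$ (resp.\ $\psi_1$) in the role of the $\Psi$-factor and the $\Psi_L$-function $\psi_1 f$ (resp.\ $\psi_2 f$), and the middle step uses that intrinsic rational functions are central for the $\star$-product (Remark \ref{rmk23}), so that $\psi_2\,(\psi_1 f)=\psi_1\,(\psi_2 f)$ as functions.

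To conclude, I would compose the key identity on the left with $(\psi_1(T))^{-1}$. Since $\psi_1(T)$ and $\psi_2(T)$ are rational functions of the same operator $T$, the inverse $(\psi_1(T))^{-1}$ commutes with $\psi_2(T)$ on the relevant domain (Proposition \ref{RATUNO}(ii)), so the right-hand side becomes $\psi_2(T)\big[(\psi_1(T))^{-1}(\psi_1 f)(T)\big]$, while the left-hand side is $(\psi_2 f)(T)$; that is,
\[
\psi_2(T)\,f(T)=(\psi_2 f)(T),
\]
where $f(T)$ is the operator defined through $\psi_1$. Composing with $(\psi_2(T))^{-1}$ and using that $\psi_2(T)$ is injective yields $f(T)=(\psi_2(T))^{-1}(\psi_2 f)(T)$, which is precisely the operator defined through $\psi_2$. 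The same computation, now with the $S$-resolvents on the right, settles \eqref{HinftyFC2}.

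The step that I expect to require the most care is not the algebra but the bookkeeping of domains for the unbounded operators $(\psi_\ell(T))^{-1}$: one must check that $\ran(\psi_1 f)(T)\subseteq\ran\psi_1(T)=\dom(\psi_1(T))^{-1}$ (and likewise for $\psi_1\psi_2$) so that the composite operators are well defined, that Theorem \ref{PRDRULE} may indeed be invoked with the chosen intrinsic rational regularizer as the $\Psi$-factor (which is automatic for the $\psi_k$ of the definition, since they belong to $\Psi(\mathcal{S}^0_\mu)$), and that the commutation $(\psi_1(T))^{-1}\psi_2(T)=\psi_2(T)(\psi_1(T))^{-1}$ really holds on the set of vectors where it is applied; all of these are handled exactly as in the complex case, using Propositions \ref{RATUNO} and \ref{propratio} for the rational functional calculus.
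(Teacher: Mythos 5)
Your argument is correct and is essentially the paper's own proof: both rest on the commutativity $\psi_1(T)\psi_2(T)=(\psi_1\psi_2)(T)=\psi_2(T)\psi_1(T)$ of the intrinsic rational regularizers, two applications of the product rule to get $\psi_2(T)(\psi_1 f)(T)=(\psi_1\psi_2 f)(T)=\psi_1(T)(\psi_2 f)(T)$, and cancellation against the inverses (the paper just organizes this by inserting $(\psi_2(T))^{-1}\psi_2(T)$ and commuting the two inverses rather than multiplying the symmetric identity by $(\psi_1(T))^{-1}$). The domain/range bookkeeping you flag is treated with the same brevity in the paper, so nothing essential is missing.
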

\begin{proof} Let us prove (\ref{HinftyFC}).
Suppose that  $\psi$ and $\psi'$ are  two different regularizers and set
$$
A:=(\psi(T))^{-1}(\psi f)(T) \ \ \ {\rm and}\ \ \ \ B:=(\psi'(T))^{-1}(\psi' f)(T).
$$
Observe that since  the functions $\psi$ and $\psi'$ commute, because there are intrinsic rational functions, it is
$$
\psi(T)\psi'(T)=(\psi\psi')(T)=(\psi'\psi)(T)=\psi'(T)\psi(T),
$$
so we get
$$
(\psi'(T))^{-1}(\psi(T))^{-1}=(\psi(T))^{-1}(\psi'(T))^{-1}.
$$
It is now easy to see that
\[
\begin{split}
A=(\psi(T))^{-1}(\psi f)(T)&=(\psi(T))^{-1}(\psi'(T))^{-1}(\psi'(T))(\psi f)(T)=
\\
&
=(\psi'(T))^{-1}(\psi(T))^{-1} (\psi \psi' f)(T)
\\
&
=(\psi'(T))^{-1}(\psi(T))^{-1} \psi(T) (\psi' f)(T)
\\
&
=(\psi'(T))^{-1}(\psi' f)(T)=B,
\end{split}
\]
where we used the fact that from the product rule, see Proposition \ref{propratio},  we have that the inverse of  $\psi(T)$ is $(1/\psi)(T)$. The proof of \eqref{HinftyFC2} follows in a  similar way.
\end{proof}
 We now state an important result for functions in $\mathcal{F}_L(\mathcal{S}^0_\mu)$ (the same result with obvious changes  holds for functions in $\mathcal{F}_R(\mathcal{S}^0_\mu)$).
\begin{theorem} Let $f\in  \mathcal{F}(\mathcal{S}^0_\mu)$ and $g\in \mathcal{F}_L(\mathcal{S}^0_\mu)$.
Then we have
$$
f(T)+g(T)\subset (f + g)(T),
$$
$$
f(T)g(T)\subset (f g)(T),
$$
and $\mathcal{D}(f(T)g(T))=\mathcal{D}((f g)(T))\bigcap \mathcal{D}(g(T)) $.
\end{theorem}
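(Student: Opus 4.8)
The plan is to write all four operators via a single common regularizer and then deduce the three assertions from the algebra already available: linearity of the type-$\omega$ calculus $\Phi$, the product rule of Theorem~\ref{PRDRULE}, the rational functional calculus identities (Propositions~\ref{RATUNO} and~\ref{propratio}; in particular $\psi(T)$ is bounded and injective, $(\psi(T))^{-1}=(1/\psi)(T)$, and $(\psi\eta)(T)=\psi(T)\eta(T)$ for intrinsic rational $\psi,\eta$), together with the commutation of operators obtained from the $S$-functional calculus with the pseudo-resolvents $Q_s(T)$.

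First I would fix the regularizers. Since $f\in\mathcal{F}(\mathcal{S}^0_\mu)$ is intrinsic, the functions $fg$, $f+g$, $\psi f$, $\eta g$ are again slice hyperholomorphic of the appropriate type, and $f+g,\,fg\in\mathcal{F}_L(\mathcal{S}^0_\mu)$, so the right-hand sides are meaningful. Choose an intrinsic rational $\psi$ regularizing $f$ (so $\psi f\in\Psi(\mathcal{S}^0_\mu)$) and an intrinsic rational $\eta$ regularizing $g$ (so $\eta g\in\Psi_L(\mathcal{S}^0_\mu)$), for instance powers of $s/(1+s^2)$ of sufficiently high degree. Then the growth estimates give $\psi\eta f\in\Psi(\mathcal{S}^0_\mu)$, $\psi\eta g\in\Psi_L(\mathcal{S}^0_\mu)$, $\psi\eta(f+g)\in\Psi_L(\mathcal{S}^0_\mu)$ and $\psi\eta(fg)=(\psi f)(\eta g)\in\Psi_L(\mathcal{S}^0_\mu)$, so $\psi\eta$ regularizes $f$, $g$, $f+g$ and $fg$ simultaneously. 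By the preceding theorem on independence of the regularizer, all four operators may be computed through $\psi\eta$: for example $(fg)(T)=((\psi\eta)(T))^{-1}(\psi\eta fg)(T)$, and analogously for the others.

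For the sum, linearity of $\Phi$ gives $(\psi\eta(f+g))(T)=(\psi\eta f)(T)+(\psi\eta g)(T)$; since on $\mathcal{D}(f(T))\cap\mathcal{D}(g(T))$ both summands land in $\ran((\psi\eta)(T))$ and $(\psi\eta)(T)$ is injective, $((\psi\eta)(T))^{-1}$ is additive there, which yields $f(T)+g(T)\subset(f+g)(T)$ (with no domain equality, as $\mathcal{D}((f+g)(T))$ is generally larger). For the product, Theorem~\ref{PRDRULE} applied to $\psi f\in\Psi(\mathcal{S}^0_\mu)$ and $\eta g\in\Psi_L(\mathcal{S}^0_\mu)$ gives $(\psi\eta fg)(T)=(\psi f)(T)(\eta g)(T)$, both factors bounded. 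The computation then rests on the identity
\[(\psi f)(T)\,(\eta g)(T)=\eta(T)\,\psi(T)\,f(T)g(T)\qquad\text{on }\mathcal{D}(f(T)g(T)),\]
obtained by inserting the definitions $g(T)=(\eta(T))^{-1}(\eta g)(T)$ and $f(T)=(\psi(T))^{-1}(\psi f)(T)$ and then commuting the bounded operators $(\psi f)(T)$ and $(\eta g)(T)$ past $\eta(T)$ and $\psi(T)$. Applying $((\psi\eta)(T))^{-1}=(\eta(T)\psi(T))^{-1}$ yields $f(T)g(T)v=(fg)(T)v$ for every $v\in\mathcal{D}(f(T)g(T))$, hence $f(T)g(T)\subset(fg)(T)$ and $\mathcal{D}(f(T)g(T))\subseteq\mathcal{D}((fg)(T))\cap\mathcal{D}(g(T))$ (the last inclusion being part of the definition of the left-hand domain). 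For the reverse inclusion I would take $v\in\mathcal{D}((fg)(T))\cap\mathcal{D}(g(T))$, set $u:=g(T)v$, and run the same commutations together with the injectivity of $\eta(T)$ to conclude from $v\in\mathcal{D}((fg)(T))$ that $(\psi f)(T)u\in\ran(\psi(T))$, i.e. $u\in\mathcal{D}(f(T))$, so $v\in\mathcal{D}(f(T)g(T))$; this gives the domain equality.

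\textbf{Main obstacle.} The formal manipulations are routine; the real content is twofold. First, one needs that $\psi(T),\eta(T),(\psi\eta)(T)$ are bounded (they have the form $(T(\id+T^2)^{-1})^k$, so by the rational calculus their domain is all of $V$ and boundedness follows from the closed graph theorem) and injective (built into the standing hypothesis $T\in\Omega$). Second, and this is where care is needed, one must know that the bounded operators $(\psi f)(T)$ and $(\eta g)(T)$ commute with $\eta(T)$ and $\psi(T)$: this has to be extracted from the fact that any operator produced by the $S$-functional calculus commutes with every $Q_s(T)$ — equivalently with $(\id+T^2)^{-1}$ and, on the appropriate domains, with $T$ — so that $\eta(T)$ and $\psi(T)$ can be moved freely across it in the computations above. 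Granting this commutation lemma, the three statements follow by the bookkeeping indicated, which is the quaternionic transcription of the classical argument of \cite{McI1}.
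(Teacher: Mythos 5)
Your proposal is correct and follows essentially the same route as the paper: regularize $f$, $g$, $f+g$ and $fg$ by the common intrinsic rational function $\psi_1\psi_2$, use linearity of the type-$\omega$ calculus for the sum, and for the product combine Theorem~\ref{PRDRULE} with the commutation of operators produced by the $S$-functional calculus past the injective rational operators $\psi_i(T)$ and their inverses. The only difference is that you spell out the domain equality (via $u=g(T)v$ and injectivity of the regularizer) and the commutation lemma explicitly, whereas the paper uses the commutation tacitly and dismisses the domain statement with ``as in the complex case.''
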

\begin{proof}
Let us take $\psi_1$ and $\psi_2$ that regularize $f$ and $g$, respectively.
Observe that  the function $\psi:=\psi_1\psi_2$ regularize $f$, $g$, $f+g$ and $fg$ because $ \psi_1$, $\psi_2$ and $f$ commute among themselves.
Observe that
\[
\begin{split}
f(T)+g(T)&=(\psi(T))^{-1}(\psi f)(T)+(\psi(T))^{-1}(\psi g)(T)
\\
&
\subset (\psi(T))^{-1}[(\psi f)(T)+(\psi g)(T)]
\\
&
=(\psi(T))^{-1}[\psi (f+ g)](T)=(f+g)(T).
\end{split}
\]
We can consider now the product rule
\[
\begin{split}
f(T)g(T)&=(\psi_1(T))^{-1}(\psi_1 f)(T)\ (\psi_2(T))^{-1}(\psi_2 g)(T)
\\
&
\subset (\psi_1(T))^{-1}(\psi_2(T))^{-1}[(\psi_1 f)(T)(\psi_2 g)(T)]
\\
&
=(\psi_2(T)\psi_1(T))^{-1}[\psi_1(T)\psi_2(T) (f g)](T)
\\
&
=(\psi(T))^{-1}(\psi fg)(T)=(fg)(T),
\end{split}
\]
where we have used  $\psi:=\psi_1\psi_2$. Regarding the domains it is as in that complex case.
\end{proof}
\begin{remark}
{\rm We point out that in this case there is no spectral mapping theorem because the operator  $f(T)=(\psi(T))^{-1}(\psi f)(T)$ can be unbounded even when $f$ is bounded.}
\end{remark}
The following convergence theorem is stated for functions in $\mathcal{SH}_L^\infty(\mathcal{S}^0_\mu)$  but it holds also for functions in
 $\mathcal{SH}_R^\infty(\mathcal{S}^0_\mu)$ and is the quaternionic analogue of the theorem in Section 5 in \cite{McI1}.
The proof follows the proof of the convergence theorem in  \cite[p. 216]{McI1},
we just point out that the convergence theorem is based on the principle of uniform boundedness that
holds also for quaternionic operators.

 \begin{theorem}[Convergence theorem]\label{convth}
Suppose that $0\leq \omega <\mu\leq \pi$ and that $T$ is a linear operator of type $\omega$ such that it is one to one and with dense range.
Let $f_\alpha$ be a net in $\mathcal{SH}_L^\infty(\mathcal{S}^0_\mu)$ and let $f\in \mathcal{SH}_L^\infty(\mathcal{S}^0_\mu)$ and assume that:
\begin{itemize}
\item[(i)] There exists a positive constant $M$, such that $\|f_\alpha(T)\|\leq M$,
\item[(ii)]
For every $0<\delta<\lambda <\infty$
$$
\sup \{ |f_\alpha(s)-f(s)| \ \ {\rm such \ that}  \ \  s\in \mathcal{S}^0_\mu\ \ {\rm and} \ \ \delta\leq |s|\leq \lambda \}\to 0.
$$
\end{itemize}
Then $f(T)\in \mathcal{B}(V)$ and $f_\alpha(T)u\to f(T)u$ for all $u\in V$, moreover $\|f(T)\|\leq M$.
\end{theorem}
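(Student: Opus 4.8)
The strategy is to reduce the convergence statement for the $H^\infty$ functional calculus to a convergence statement at the level of the $S$-functional calculus for operators of type $\omega$, where the Cauchy-type integral representation is available, and then to invoke the principle of uniform boundedness in its quaternionic form. First I would fix a regularizer: since $T\in\Omega$, the function $\psi(s)=(s/(1+s^2))^{k+1}$ lies in $\Psi(\mathcal{S}^0_\mu)$ for every $k\in\mathbb{N}$, and by Theorem~\ref{PRDRULE} we have $f_\alpha(T)=(\psi(T))^{-1}(\psi f_\alpha)(T)$, i.e. $(\psi f_\alpha)(T)=\psi(T)f_\alpha(T)$, and similarly $(\psi f)(T)=\psi(T)f(T)$ once $f(T)$ is shown to be bounded. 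Since each $\psi f_\alpha$ and $\psi f$ belongs to $\Psi_L(\mathcal{S}^0_\mu)$ (the extra decay $|s|^{\,k+1}/(1+|s|^{2(k+1)})$ coming from $\psi$ dominates the bounded factor $f_\alpha$, $f$), these operators are given by the absolutely convergent contour integrals \eqref{PsiTLmon}.

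The core estimate is then the following: using hypothesis (ii), for a fixed admissible path $\Gamma$ in $\mathcal{S}^0_\mu\cap\C_i$ running from $\infty e^{i\theta}$ to $\infty e^{-i\theta}$ with $\omega<\theta<\mu$, split $\Gamma$ into the part with $\delta\le|s|\le\lambda$ and the two tails $|s|<\delta$ and $|s|>\lambda$. On the tails the integrand is controlled, uniformly in $\alpha$, by $C_\theta|s|^{-1}\cdot\|\psi\|\cdot(|s|^{k+1}/(1+|s|^{2(k+1)}))\cdot\sup_\alpha\|f_\alpha\|_\infty$, which is integrable near $0$ and near $\infty$; choosing $\delta$ small and $\lambda$ large makes the tail contributions uniformly small, and on the compact middle piece hypothesis (ii) forces $\sup_{\Gamma\cap\{\delta\le|s|\le\lambda\}}|f_\alpha(s)-f(s)|\to 0$. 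Hence $(\psi f_\alpha)(T)\to(\psi f)(T)$ in operator norm, where $(\psi f)(T)$ denotes the bounded operator defined by the same contour integral with $\psi f$ in place of $\psi f_\alpha$. Applying the (closed, injective) operator $(\psi(T))^{-1}$ and using $(\psi(T))^{-1}(\psi f_\alpha)(T)=f_\alpha(T)$ together with hypothesis (i), we deduce that $f_\alpha(T)u=(\psi(T))^{-1}(\psi f_\alpha)(T)u\to(\psi(T))^{-1}(\psi f)(T)u=:f(T)u$ for every $u$ in the dense range of $\psi(T)$; indeed on $\mathrm{Ran}(\psi(T))$ we can write $u=\psi(T)w$ and $f_\alpha(T)u=(\psi f_\alpha)(T)w\to(\psi f)(T)w$.

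Finally, the uniform bound $\|f_\alpha(T)\|\le M$ from (i), combined with convergence of $f_\alpha(T)u$ on the dense subspace $\mathrm{Ran}(\psi(T))$, extends by a standard $\varepsilon/3$ argument (density plus equiboundedness) to convergence of $f_\alpha(T)u$ for all $u\in V$; the limit operator, call it $Lu=\lim_\alpha f_\alpha(T)u$, is then everywhere defined, linear, and satisfies $\|L\|\le M$. It remains to identify $L$ with $f(T)$: on $\mathrm{Ran}(\psi(T))$ we already have $L=f(T)=(\psi(T))^{-1}(\psi f)(T)$, and since $L$ is bounded and $(\psi f)(T)=\psi(T)L$ holds on this dense set while both sides are bounded, the identity $(\psi f)(T)=\psi(T)L$ extends to all of $V$, which is precisely the assertion $f(T)=L\in\mathcal{B}(V)$ with $\|f(T)\|\le M$. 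The one point requiring care — and the place I expect the only genuine subtlety — is the passage from the dense subspace to all of $V$: this is exactly where the quaternionic principle of uniform boundedness is used, to guarantee that the pointwise limit on a dense set together with equiboundedness yields a bounded limit operator on the whole space; everything else is a routine adaptation of McIntosh's argument \cite[p.~216]{McI1}, the contour estimates being underwritten by the type-$\omega$ resolvent bounds and the decay built into $\psi$.
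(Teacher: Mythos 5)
Your route is exactly the one the paper intends: the paper gives no proof of Theorem \ref{convth}, deferring to McIntosh's Convergence Lemma and the uniform boundedness principle, and your reconstruction (regularize by $\psi$, prove norm convergence of $(\psi f_\alpha)(T)$ by splitting the contour into a compact annulus and two tails, pass to the dense subspace $\mathrm{Ran}(\psi(T))$, use the uniform bound $M$ with an $\varepsilon/3$ density argument, and identify the strong limit with $f(T)=(\psi(T))^{-1}(\psi f)(T)$) is precisely that classical argument transplanted to the $S$-functional calculus.

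One step, however, is not justified by the hypotheses as literally stated: your tail estimate controls the integrand by a constant times $\sup_\alpha\|f_\alpha\|_\infty$, but neither (i) nor (ii) gives finiteness of this quantity — (i) bounds the operator norms $\|f_\alpha(T)\|$, not the sup norms of the functions, and (ii) only controls $f_\alpha-f$ on the compact annuli $\delta\le|s|\le\lambda$, so the contributions from $|s|<\delta$ and $|s|>\lambda$ need not be small uniformly in $\alpha$. McIntosh's original Convergence Lemma assumes in addition that the net is uniformly bounded in the sup norm, and that extra hypothesis (implicitly inherited here through the citation) is what your estimate actually uses; in a complete write-up you should state it explicitly, or note that the theorem must be read with it. A second, more minor point: you use $f_\alpha(T)\psi(T)w=(\psi f_\alpha)(T)w$ and the analogous identity for $f$, which amount to commuting $\psi(T)$ with $\varphi(T)$ for $\varphi\in\Psi_L(\mathcal{S}^0_\mu)$; Theorem \ref{PRDRULE} gives the product rule only in the order $\psi(T)\varphi(T)=(\psi\varphi)(T)$ with the intrinsic factor acting first, so the reversed-order identity $\varphi(T)\psi(T)=(\psi\varphi)(T)$ needs a (routine, same-proof) companion statement via the $S$-resolvent equation — worth one explicit line.
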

In the following section we discuss the boundedness of the $H^\infty$ functional calculus.

\section{Quadratic estimates and the $H^\infty$ functional calculus}
Let $\mathcal{H}$ be a right linear quaternionic Hilbert space with an $\mathbb{H}$-valued inner product $\langle \cdot, \cdot \rangle$ which satisfies, for every $\alpha$, $\beta\in \mathbb{H}$, and $x$, $y$, $z\in \mathcal{H}$, the relations:
\begin{align*}
\langle x, y \rangle =& \; \overline{ \langle y, x \rangle }, \\
\langle x, x \rangle \geq& \; 0 \quad {\rm and}
\quad \| x \|^2 := \langle x, x \rangle = 0 \Longleftrightarrow x = 0, \\
\langle x \alpha + y \beta, z \rangle =& \; \langle x, z \rangle \alpha + \langle y, z \rangle \beta, \\
\langle x, y \alpha + z \beta \rangle =& \; \bar{\alpha} \langle x, y \rangle + \bar{\beta} \langle x, z \rangle.
\end{align*}
\begin{definition}
We will call a subset $\mathcal{N} \subseteq \mathcal{H}$ a {\it Hilbert basis} if
\begin{align}
&\langle x, y \rangle = \; 0 \quad \text{for $x,y \in \mathcal{N}$ so that $x \neq y$,} \label{eq:Jun14r1} \\
&\langle x, x \rangle = \; 1 \quad \text{for $x \in \mathcal{N}$ so that $x \neq 0$,} \label{eq:Jun14r2} \\
&\{ x \in \mathcal{H}: \text{$\langle x, y \rangle = 0$ for all $y \in \mathcal{N}$}\} = \; \{ 0 \}.
\label{eq:Jun14r3}
\end{align}
\end{definition}
With a choice of the Hilbert basis $\mathcal{N}$ a quaternionic  Hilbert space on one side (left or right) can always been made two-sided. Thus it is not reductive to consider a quaternionic two-sided Hilbert space and repeat what we have done in the case of a Banach space to define a $H^\infty$ functional calculus.\\
 The crucial tool to show the boundedness of the $H^\infty$ functional calculus are the so called quadratic estimated, see \cite{McI1}.

\begin{definition}[Quadratic estimate]
Let $T$ be a right linear operator of type $\omega$ on a quaternionic Hilbert space $\mathcal{H}$ and let $\psi\in \Psi(\mathcal{S}^0_\mu)$ where $0\leq \omega <\mu\leq \pi$. We say that $T$ satisfies a quadratic estimate with respect to $\psi$ if there exists a positive constant $\beta$ such that
$$
\int_0^\infty\|\psi(tT)u\|^2\frac{dt}{t}\leq \beta^2\|u\|^2,\ \ \ {\rm for \ all} \ u\in \mathcal{H},
$$
where we write $\|u\|$ for $\|u\|_{\mathcal{H}}$.
\end{definition}
Let us introduce the notation
$$
\Psi^+(\mathcal{S}^0_\mu)=\{\psi\in \Psi(\mathcal{S}^0_\mu)\  : \ \psi(t)>0\ \ {\rm for \ all} \ t\in(0,\infty)\}
$$
and
$$
\psi_t(s)=\psi(ts),\ \ \ \ t\in(0,\infty).
$$
\begin{theorem} Let $0\leq \omega <\mu\leq \pi$ and assume that $T$ is a right linear operator in $\Omega$. Suppose that $T$ and its adjoint $T^*$ satisfy the quadratic estimates with respect to the functions $\psi$ and $\widetilde{\psi}\in \Psi^+(\mathcal{S}^0_\mu)$.
Suppose that $f$ belongs to $\mathcal{SH}_L^\infty(\mathcal{S}^0_\mu)$.
Then the operator $f(T)$ is bounded and there exists a positive constant $C$ such that
$$
\|f(T)\|\leq C\|f\|_\infty \ \ \ \ {\it for \ all} \ \ \ f\in \mathcal{SH}_L^\infty(\mathcal{S}^0_\mu).
$$
\end{theorem}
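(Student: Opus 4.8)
The strategy is the quaternionic adaptation of McIntosh's original argument: first prove the bound $\|f(T)\| \le C\|f\|_\infty$ for functions $f$ in the small class $\Psi_L(\mathcal{S}^0_\mu)$ using the quadratic estimates for $T$ and $T^*$, and then extend it to all of $\mathcal{SH}_L^\infty(\mathcal{S}^0_\mu)$ by the Convergence Theorem~\ref{convth}. Throughout, one replaces Cauchy's formula by the $S$-functional calculus integrals \eqref{PsiTLmon}--\eqref{PsiTRmon} and the complex resolvent identity by the $S$-resolvent equation \eqref{resEQ}.

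\textbf{Step 1: a reproducing-type identity.} Fix $\psi\in\Psi^+(\mathcal{S}^0_\mu)$ as in the hypothesis. Since $T$ satisfies the quadratic estimate with respect to $\psi$ and $T^*$ with respect to $\widetilde\psi$, and both are positive on $(0,\infty)$, one can normalize so that a Calder\'on-type resolution of the identity holds: there is $\eta\in\Psi^+(\mathcal{S}^0_\mu)$ (essentially $\eta = \psi\widetilde\psi$ up to a scalar) with $\int_0^\infty \eta(t)\,\frac{dt}{t} = 1$, and then for $u,v\in\mathcal{H}$
\[
\langle f(T)u, v\rangle = \int_0^\infty \langle (\psi_t f)(T)\, u, \ \widetilde\psi_t(T^*)\, v\rangle \,\frac{dt}{t},
\]
valid first for $f\in\Psi_L(\mathcal{S}^0_\mu)$. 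This is obtained by writing $f(T)=\int_0^\infty (\eta_t f)(T)\frac{dt}{t}$ using the product rule (Theorem~\ref{PRDRULE}) together with the fact that $\psi_t(T)^* = \widetilde\psi_t(T^*)$ for intrinsic $\psi$, which follows from the integral representation \eqref{PsiTLmon} and the adjoint relations for the $S$-resolvents on a quaternionic Hilbert space.

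\textbf{Step 2: the Schur/Cauchy--Schwarz estimate.} From the identity of Step 1,
\[
|\langle f(T)u,v\rangle| \le \Big(\int_0^\infty \|(\psi_t f)(T)u\|^2\tfrac{dt}{t}\Big)^{1/2}\Big(\int_0^\infty \|\widetilde\psi_t(T^*)v\|^2\tfrac{dt}{t}\Big)^{1/2}.
\]
The second factor is $\le \widetilde\beta\|v\|$ directly from the quadratic estimate for $T^*$. For the first factor one uses that $\psi_t f$ still lies in $\Psi_L(\mathcal{S}^0_\mu)$ with $\|\psi_t f(s)\| \le \|f\|_\infty |\psi(ts)|$, so by the Schur-test argument on the kernel $\varphi(s) = \psi(s)\chi(s)$ (where $\chi$ is a fixed comparison function built from $\psi$) together with a second application of the quadratic estimate for $T$, one gets $\int_0^\infty\|(\psi_t f)(T)u\|^2\frac{dt}{t} \le C^2\|f\|_\infty^2\|u\|^2$. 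Taking the supremum over $\|u\|,\|v\|\le 1$ yields $\|f(T)\|\le C\|f\|_\infty$ for $f\in\Psi_L(\mathcal{S}^0_\mu)$.

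\textbf{Step 3: extension to $\mathcal{SH}_L^\infty$.} Given $f\in\mathcal{SH}_L^\infty(\mathcal{S}^0_\mu)$, approximate it by the net $f_n := \varphi_n \star f$ where $\varphi_n(s) = (n s/(1+ns^2))\star(s/(n+s^2))$ (or a similar intrinsic family in $\Psi(\mathcal{S}^0_\mu)$ converging to $1$ uniformly on each annulus $\delta\le|s|\le\lambda$ while staying uniformly bounded). Each $f_n\in\Psi_L(\mathcal{S}^0_\mu)$, so $\|f_n(T)\|\le C\|f_n\|_\infty \le C'\|f\|_\infty$ by Step 2, and $f_n\to f$ satisfies the two hypotheses of Theorem~\ref{convth}. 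Hence $f(T)\in\mathcal{B}(\mathcal{H})$ with $\|f(T)\|\le C'\|f\|_\infty$, which is the claim (the constant absorbs the fixed $\beta,\widetilde\beta$).

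\textbf{Main obstacle.} The delicate point is Step 1--2: making precise, in the noncommutative quaternionic setting, that $\psi_t(T)^* = \widetilde\psi_t(T^*)$ and that the $S$-functional calculus is compatible with taking adjoints and with the Calder\'on reproducing formula. One must check that the integral $\frac{1}{2\pi}\int_\Gamma S_L^{-1}(s,T)\,ds_i\,\psi(s)$ has adjoint $\frac{1}{2\pi}\int_\Gamma \overline{\psi(\bar s)}\,d\bar s_i\, S_R^{-1}(\bar s,T^*)$ and to relate this to $\widetilde\psi(T^*)$ — this is where intrinsicness of $\psi$ (so that $\overline{\psi(\bar s)} = \psi(s)$ on the relevant planes) is essential, as noted in Remark~\ref{rmk23}. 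Once the adjoint identity and the resolution of identity are in place, the Schur-type estimate and the convergence argument proceed exactly as in the complex case.
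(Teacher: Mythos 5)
Your overall strategy---a Calder\'on-type reproducing formula, Cauchy--Schwarz against the two quadratic estimates, an elementary uniform bound on the factor carrying $f$, and the Convergence Theorem \ref{convth} to pass from the small class to all of $\mathcal{SH}_L^\infty(\mathcal{S}^0_\mu)$---is exactly the paper's (McIntosh-style) route, and your Step 3 parallels the paper's truncations $f_{\varepsilon,R}$ with a different mollifier family, which is harmless. You are also right that the adjoint compatibility for intrinsic functions is the point the paper glosses over with ``some computations''; note, though, that the identity you need is $\widetilde\psi_t(T)^*=\widetilde\psi_t(T^*)$ for the \emph{same} intrinsic function $\widetilde\psi$ (your Step 1 states $\psi_t(T)^*=\widetilde\psi_t(T^*)$, which as written relates two different functions and is a slip).

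The genuine gap is in Step 2. With your two-factor normalization $\eta=\psi\widetilde\psi$, the factor carrying $f$ is $(\psi_t f)(T)$, and you need $\int_0^\infty\|(\psi_t f)(T)u\|^2\,\frac{dt}{t}\le C^2\|f\|_\infty^2\|u\|^2$. This does \emph{not} follow from ``a second application of the quadratic estimate for $T$'': the hypothesis controls $\int_0^\infty\|\psi_t(T)u\|^2\frac{dt}{t}$ only, and you cannot write $(\psi_t f)(T)=\psi_t(T)f(T)$ because the boundedness of $f(T)$ is precisely what is being proved. The inequality is true, but it is a nontrivial sub-lemma: one needs a Calder\'on decomposition of $u$, uniform off-diagonal bounds of the type $\|(\psi_t f\,\chi_s)(T)\|\le c\,\|f\|_\infty\min\{(t/s)^\alpha,(s/t)^\alpha\}$ obtained from the integral representation, and a Schur estimate---an argument of essentially the same weight as the theorem itself, which your sketch only gestures at. The paper avoids this entirely by normalizing with \emph{three} functions: it chooses $\eta\in\Psi^+(\mathcal{S}^0_\mu)$ so that $\varphi=\psi\widetilde\psi\eta$ satisfies $\int_0^\infty\varphi(t)\frac{dt}{t}=1$, and then in $\langle f_{\varepsilon,R}(T)u,v\rangle=\int_\varepsilon^R\langle(\eta_t f)(T)\psi_t(T)u,\ \widetilde\psi_t(T)^*v\rangle\frac{dt}{t}$ the function $f$ sits only in the middle factor $(\eta_t f)(T)$, which is bounded by $C\|f\|_\infty$ uniformly in $t$ by the elementary resolvent estimate displayed in the paper, while the two given quadratic estimates apply verbatim to the outer factors $\psi_t(T)u$ and $\widetilde\psi_t(T)^*v$. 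Either carry out the almost-orthogonality/Schur argument in full, or insert the third function $\eta$ as the paper does; with the latter, your Steps 1 and 3 go through unchanged.
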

\begin{proof}
We follow the proof of Theorem at p. 221 in  \cite{McI1}, and  we point out the differences.
We observe that we choose the function $\psi$, $\widetilde{\psi}$ and $\eta$ in  the space of intrinsic functions
$\Psi^+(\mathcal{S}^0_\mu)$ because the pointwise product
$$
\varphi(s):=\psi(s)\widetilde{\psi}(s)\eta(s)
$$
has to be slice hyperholomorphic and moreover  $\eta$ has to be such that
$$
\int_0^\infty\varphi(t)\, \frac{dt}{t}=1.
$$
For $f\in \mathcal{SH}_L^\infty(\mathcal{S}^0_\mu)$ let us define
$$
f_{\varepsilon,R}(s)=\int_\varepsilon^R(\varphi_t f)(s)\frac{dt}{t}.
$$
Using the quadratic estimates it follows that
there exists a positive constant $C$ such that
$$
\|f_{ \varepsilon,R }(T)\| \leq C\|f\|_{\infty}
$$
the Convergence Theorem \ref{convth}  gives the formula
$$
f(T)u=\lim_{\varepsilon \to 0} \lim_{ R\to \infty}f_{\varepsilon,R}(T)u\ \ \  {\rm for\ all}\ \  u\in \mathcal{H}
$$
where $(\eta_t f)(T)$ is defined by the $S$-functional calculus
$$
(\eta_t f)(T)=\frac{1}{2\pi}\int_{\Gamma} S_L^{-1}(s,T)\, ds_i\, \eta_t(s)f(s),\ \ \ \ {\rm for\ all }\ \ \ f\in \Psi_L(\mathcal{S}^0_\mu),
$$
since $\eta_t f\in \Psi_L(\mathcal{S}^0_\mu)$ because  $\eta_t$ is intrinsic.
Precisely,
 the quadratic estimates and some computations show that there exists a positive constant $C_\beta$ such that
$$
|\langle f_{\varepsilon,R}(T)u, v\rangle|\leq C_\beta\sup_{t\in (0,\infty)}\|(\eta_tf)(T)\|\|u\|\|v\|.
$$
Since
\[
\begin{split}
\|(\eta_t f)(T)\|& \leq \frac{1}{2\pi} \|f\|_{\infty} \sup_{i\in \mathbb{S}}
\int_{\Gamma} \, \|S_L^{-1}(s,T)\|  |ds_i|\, |\eta_t(s)|
\\
&
 \leq \frac{1}{2\pi} \sup_{i\in \mathbb{S}}
 \|f\|_{\infty}\int_{\Gamma} \frac{C_\eta}{|s|}\, |ds_i|\, \frac{c|s|^\alpha}{1+|s|^{2\alpha}}
\\
&
\leq C_T(\mu,\eta) \|f\|_{\infty}.
\end{split}
\]
From the above estimates we get the statement.
\end{proof}

\section{The case of $n$-tuples of operators}\label{sec7}
The notion of slice hyperholomorphicity can be given for Clifford algebra-valued functions, see \cite{MR2752913}.
In this section, we recall the main results on this function theory and on the operators that we will consider later, without giving the details of the proofs (which can be found in \cite{MR2752913}).

\subsection{Preliminaries on the function theory}
Let $\rr_n$ be the real Clifford algebra over $n$ imaginary units $e_1,\ldots ,e_n$
satisfying the relations $e_ie_j+e_je_i=0$ for  $i\not= j$ and  $e_i^2=-1$.
 An element in the Clifford algebra will be denoted by $\sum_A e_Ax_A$ where
$A=\{ i_1\ldots i_r\}\in \mathcal{P}\{1,2,\ldots, n\},\ \  i_1<\ldots <i_r$
 is a multi-index
and $e_A=e_{i_1} e_{i_2}\ldots e_{i_r}$, $e_\emptyset =1$.
An element $(x_0,x_1,\ldots,x_n)\in \mathbb{R}^{n+1}$  will be identified with the element
$$
 x=x_0+\underline{x}=x_0+ \sum_{j=1}^nx_je_j,
$$
a so-called paravector,
in the Clifford algebra $\mathbb{R}_n$.
The real part $x_0$ of $x$ will also be denoted by $\Re(x)$.
The square of the norm of $x\in\mathbb{R}^{n+1}$ is defined by $|x|^2=x_0^2+x_1^2+\ldots +x_n^2$
 and the conjugate of $x$ is
$$
\bar x=x_0-\underline x=x_0- \sum_{j=1}^nx_je_j.
$$
Let
$$
\S:=\{ \underline{x}=e_1x_1+\ldots +e_nx_n\ : \  x_1^2+\ldots +x_n^2=1\}.
$$
Observe that for $i\in\S$, we obviously have $i^2=-1$.
Given an element $x=x_0+\underline{x}\in\rr^{n+1}$, we set
\[
i_x:=\begin{cases}\underline{x}/|\underline{x}| & \text{if }\underline{x}\neq0\\
\text{any element of $\S$} & \text{if } \underline{x} = 0,
\end{cases}
\]
then $x = u + i_xv$ with $u = x_0$ and $v = |\underline{x}|$.
For any element $x=u + i_xv\in\rr^{n+1}$, the set
$$
[x]:=\{y\in\rr^{n+1}\ :\ y=u+i v, \ i\in \mathbb{S}\}
$$

is an $(n-1)$-dimensional sphere in $\mathbb{R}^{n+1}$.
The vector space $\mathbb{R}+i\mathbb{R}$ passing through $1$ and
$i\in \mathbb{S}$ will be denoted by $\mathbb{C}_i$ and
an element belonging to $\mathbb{C}_i$ will be indicated by $u+iv$ with  $u$, $v\in \mathbb{R}$.

Since we identify the set of paravectors with the space $\mathbb{R}^{n+1}$, if $U\subseteq\mathbb{R}^{n+1}$ is an open set,
a function $f:\ U\subseteq \mathbb{R}^{n+1}\to\mathbb{R}_n$ can be interpreted as
a function of a paravector $x$.

\begin{definition}[Slice hyperholomorphic functions]
\label{defsmon}
Let $U\subseteq\mathbb{R}^{n+1}$ be an open set and let
$f: U\to\mathbb{R}_n$ be a real differentiable function. Let
$i\in\mathbb{S}$ and let $f_i$ be the restriction of $f$ to the
complex plane $\mathbb{C}_i$.
\\
The function  $f$ is said to be left slice  hyperholomorphic (or slice monogenic) if, for every
$i\in\mathbb{S}$, it satisfies
$$
\frac{1}{2}\left(\frac{\partial }{\partial u}f_i(u+iv)+i\frac{\partial
}{\partial v}f_i(u+iv)\right)=0
$$
 on $U\cap \mathbb{C}_i$. We denote the set of left slice hyperholomorphic functions on the open set $U$  by
$\mathcal{SM}_L(U)$.
\\
The function $f$ is said to be right slice hyperholomorphic (or right slice monogenic) if,
for every
$i\in\mathbb{S}$, it satisfies
$$
\frac{1}{2}\left(\frac{\partial }{\partial u}f_i(u+iv)+\frac{\partial
}{\partial v}f_i(u+iv)i\right)=0
$$
on $U\cap \mathbb{C}_i$.
We denote the set of right slice hyperholomorphic functions on the open set $U$  by $\mathcal{SM}_R(U)$.

A left (or right) slice  hyperholomorphic function that satisfies $f(U\cap\C_i)\subset\C_i$ for any $i\in\S$ is called intrinsic. The set of all intrinsic functions will be denoted by $\intrin(U)$.
\end{definition}
\begin{remark}
We use the same symbol $\intrin(U)$ to denote intrinsic functions
for the quaternionic case and for the Clifford algebra case,  the meaning is clear from the context
and no confusion arises.
\end{remark}
\begin{remark} Let $x$ be a paravector, then
any power series of the form $\sum_{\ell=0}^\infty x^\ell a_\ell$ with $a_\ell\in\mathbb{R}_n$, for $\ell\in \mathbb{N}$, is left slice hyperholomorphic and any power series of the form $\sum_{\ell=0}^\infty b_\ell x^\ell$ with $b_\ell\in\mathbb{R}_n$, for $\ell\in \mathbb{N}$, is right slice hyperholomorphic.
In the case $a_\ell$, or similarly $b_\ell$ (for all $\ell\in \mathbb{N}\cup\{0\}$) are real numbers the power series define intrinsic functions, where they converge. They are both left and right slice hyperholomorphic.
\end{remark}

\begin{lemma}[Splitting Lemma]\label{SplitLemM}
Let $U\subset\mathbb{R}^{n+1}$ be open and let $f:U\to\mathbb{R}_n$ be a left slice hyperholomorphic function. For every $i=i_1\in\S$, let $i_2,\ldots,i_n$ be a completion to a basis of $\mathbb{R}_n$ that satisfies the relation  $i_ri_s + i_s i_r = - 2\delta_{r,s}$.
Let $f_i$ be the restriction of $f$ to the complex plane $\mathbb{C}_i$. Then there exist $2^{n-1}$ holomorphic functions $F_A: U\cap\C_i\to\C_i$ such that for every $z\in U\cap\C_i$
\[f_i (z) = \sum_{|A|=0}^{n-1}F_A(z)i_A\]
where $i_A = i_{\ell_1}\cdots i_{\ell_s}$ for any nonempty subset $A=\{\ell_1<\ldots<\ell_s\}$  of $\{2,\ldots,n\}$ and $i_{\emptyset} = 1$.

Similarly, if $g:U\to\H$ is right slice hyperholomorphic, then there exist $2^{n-1}$ holomorphic functions $G_A:U\cap\C_i\to\C_i$ such that for every $z\in U\cap\C_i$
\[g_i(z) = \sum_{|A|=0}^{n-1}i_AG_A(z).\]
\end{lemma}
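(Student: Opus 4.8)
The plan is to reduce the Clifford-algebra Splitting Lemma to the classical fact that a holomorphic function of one complex variable, when composed with the inclusion $\mathbb{C}_i\hookrightarrow\mathbb{R}_n$, decomposes against a real basis of the Clifford algebra. First I would fix $i=i_1\in\S$ and, using the standing assumption $e_ie_j+e_je_i=-2\delta_{ij}$, complete $i_1$ to a set $i_2,\ldots,i_n$ satisfying $i_ri_s+i_si_r=-2\delta_{rs}$; this is possible by a Gram–Schmidt-type argument in the quadratic space generated by the $e_j$'s (pick $i_2$ orthogonal to $i_1$, then $i_3$ orthogonal to $\mathrm{span}\{i_1,i_2\}$, etc.), a routine linear-algebra step that I would state but not belabor. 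The products $i_A=i_{\ell_1}\cdots i_{\ell_s}$ with $A\subseteq\{2,\ldots,n\}$ then form a real basis for the subalgebra $\mathcal{A}$ of $\mathbb{R}_n$ generated by $i_2,\ldots,i_n$, and $\mathbb{R}_n$ decomposes as the direct sum $\mathcal{A}\oplus i_1\mathcal{A}$ of real vector spaces; I would record the elementary count $\dim_{\mathbb{R}}\mathcal{A}=2^{n-1}$, which yields the $2^{n-1}$ functions in the statement.

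Next, writing $f_i(z)$ for $z=u+i_1 v\in U\cap\mathbb{C}_i$, I would expand $f_i(z)$ in the real basis $\{i_A, i_1 i_A : A\subseteq\{2,\ldots,n\}\}$ of $\mathbb{R}_n$, collecting, for each fixed $A$, the two real-valued coefficient functions multiplying $i_A$ and $i_1 i_A$ into a single $\mathbb{C}_i$-valued function $F_A(z)$ (the scalar part being the coefficient of $i_A$ and the $i_1$-part the coefficient of $i_1 i_A$). Concretely, $f_i(z)=\sum_{|A|=0}^{n-1} F_A(z)\, i_A$ by construction, and the content of the lemma is that each $F_A$ is holomorphic as a function $U\cap\mathbb{C}_i\to\mathbb{C}_i$. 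For this I would apply the left slice hyperholomorphicity equation
\[
\frac12\Big(\frac{\partial}{\partial u}f_i(u+i_1v)+i_1\frac{\partial}{\partial v}f_i(u+i_1v)\Big)=0
\]
and expand the left-hand side in the same real basis. The key algebraic observation is that $i_1$ commutes with every $i_A$ up to sign — precisely $i_1 i_A = (-1)^{|A|} i_A i_1$ — so that $i_1\,\partial_v f_i$ re-expresses, term by term, as $\partial_v F_A$ times $\pm i_1 i_A$, keeping each $i_A$-component of the equation self-contained and not mixing distinct multi-indices. Matching the coefficients of $i_A$ and of $i_1 i_A$ separately then gives exactly the Cauchy–Riemann system $\partial_u(\Re F_A)=\partial_v(\Im F_A)$ and $\partial_u(\Im F_A)=-\partial_v(\Re F_A)$ for each $A$, i.e.\ $F_A$ is holomorphic. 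The right slice hyperholomorphic case is identical after writing the decomposition as $g_i(z)=\sum_{|A|=0}^{n-1} i_A\, G_A(z)$ and using $\partial_v g_i\, i_1$ in place of $i_1\,\partial_v f_i$, so that the sign bookkeeping via $i_A i_1 = (-1)^{|A|} i_1 i_A$ again decouples the components.

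The main obstacle — really the only delicate point — is the sign bookkeeping: one must verify carefully that conjugating $i_1$ past an arbitrary basis element $i_A$ produces the sign $(-1)^{|A|}$ and that this is precisely what is needed to keep the $2^{n-1}$ scalar equations decoupled; a careless treatment here would appear to entangle the coefficient functions. A secondary routine point is checking that the real differentiability of $f$ descends to real differentiability (hence, with the Cauchy–Riemann equations, holomorphy) of the restrictions $F_A$, which follows because the coordinate projections $\mathbb{R}_n\to\mathbb{R}$ onto the basis components are $\mathbb{R}$-linear and continuous. Modulo these, the proof is a direct computation, and I would present it for the left case in detail and indicate that the right case follows mutatis mutandis.
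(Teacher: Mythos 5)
Your proof is correct and follows what is essentially the standard argument for this lemma, which the paper itself does not reprove but recalls from \cite{MR2752913}: expand $f_i$ over the real basis $\{i_A,\, i_1 i_A\}_{A\subseteq\{2,\ldots,n\}}$ of $\mathbb{R}_n$ and read the slice Cauchy--Riemann equation componentwise, which yields the classical Cauchy--Riemann system for each $\mathbb{C}_i$-valued coefficient $F_A$. The only (harmless) inaccuracy is that the relation $i_1 i_A=(-1)^{|A|}i_A i_1$, which you single out as the key step, is never actually needed and no $\pm$ signs arise: with the coefficients placed on the left (resp.\ right) of $i_A$ in the left (resp.\ right) slice hyperholomorphic case, $i_1$ multiplies $F_A\in\mathbb{C}_i$ inside the commutative plane $\mathbb{C}_i$, so left (resp.\ right) multiplication by $i_1$ preserves each summand $\mathbb{C}_i\, i_A$ (resp.\ $i_A\,\mathbb{C}_i$) automatically, which is exactly the decoupling your coefficient matching then exploits.
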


Slice hyperholomorphic functions have good properties when they are defined on suitable domains whose definition mimics the one in the quaternionic case.

\begin{definition}[Axially symmetric slice domain]\label{axsymm}
Let $U$ be a domain in $\rr^{n+1}$.
We say that $U$ is a
\textnormal{slice domain} (s-domain for short) if $U \cap \mathbb{R}$ is nonempty and if $U\cap \mathbb{C}_i$ is a domain in $\mathbb{C}_i$ for all $i \in \mathbb{S}$.
We say that $U$ is
\textnormal{axially symmetric} if, for all $x \in U$, the
$(n-1)$-sphere $[x]$ is contained in $U$.
\end{definition}
The crucial result of slice hyperholomorphic functions is the representation formula (or structure formula), first proved in \cite{TREND}.

\begin{theorem}[Representation Formula]\label{formulamonM}
Let $U\subseteq
\mathbb{R}^{n+1}$ be an axially symmetric s-domain and let
$f\in \mathcal{SM}_L(U)$. Then
 for any  vector
$x =u+i_x v\in U$ the following formula hold:
\begin{equation}\label{distributionmon1}
f(x)=\frac{1}{2}\Big[   f(u +i v)+f(u -i v)\Big] +\frac{1}{2}\Big[i_x
i[f(u -i v)-f(u +i v)]\Big].
\end{equation}
If $f\in \mathcal{SM}_R(U)$ then
\begin{equation}\label{distributionmon1}
f(x)=\frac{1}{2}\Big[   f(u +i v)+f(u -i v)\Big] +\frac{1}{2}\Big[
[f(u -i v)-f(u +i v)]i\Big]i_x.
\end{equation}
\end{theorem}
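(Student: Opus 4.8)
The plan is to establish the left-handed formula; the right-handed one follows by the symmetric argument, using the right Cauchy--Riemann equation of Definition~\ref{defsmon} and putting $i_x$ on the right. First I would fix an imaginary unit $j\in\S$. For $x=u+i_xv\in U$ with $u=\Re(x)$, $v=|\underline x|\ge 0$, axial symmetry of $U$ guarantees that the reflected points $u+jv$ and $u-jv$ both lie in $U\cap\C_j$, so I may define
\[
\alpha(u,v):=\tfrac12\big[f(u+jv)+f(u-jv)\big],\qquad
\beta(u,v):=\tfrac12\,j\big[f(u-jv)-f(u+jv)\big],
\]
and $\psi(x):=\alpha(\Re x,|\underline x|)+i_x\,\beta(\Re x,|\underline x|)$ on $U$. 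The assertion to be proved is exactly $\psi=f$ on $U$.

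I would first check, by a short direct computation, that $\psi=f$ on the slice $U\cap\C_j$: if $x=u+jv$ with $v\ge 0$ then $i_x=j$ and $j\cdot j=-1$, so the two copies of $f(u-jv)$ cancel and $\psi(x)=f(u+jv)$; if $v<0$ one rewrites $x=u+(-j)|v|$, so $i_x=-j$, $(-j)j=1$, and the same cancellation gives $\psi(x)=f(u+jv)$; at real points $\beta$ vanishes and $\alpha(u,0)=f(u)$. Next I would show $\psi\in\mathcal{SM}_L(U)$. On an arbitrary slice $\C_i$ we have $\psi_i(u+iv)=\alpha(u,v)+i\,\beta(u,v)$ with $\alpha,\beta$ not depending on $i$, so $\partial_u\psi_i+i\,\partial_v\psi_i=(\partial_u\alpha-\partial_v\beta)+i(\partial_u\beta+\partial_v\alpha)$. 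Setting $F^{\pm}(u,v):=f(u\pm jv)$ and applying the left Cauchy--Riemann equation for $f$ on $\C_j$ at the two points $u+jv$ and $u-jv$ (keeping track of the sign flip produced by $\partial_v$ through $v\mapsto-v$) gives $\partial_uF^{+}=-j\,\partial_vF^{+}$ and $\partial_uF^{-}=+j\,\partial_vF^{-}$; substituting these into $\alpha=\tfrac12(F^{+}+F^{-})$ and $\beta=\tfrac{j}{2}(F^{-}-F^{+})$ produces $\partial_u\alpha=\partial_v\beta$ and $\partial_u\beta=-\partial_v\alpha$, so both brackets vanish and $\psi$ satisfies the defining equation on every slice. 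One still has to confirm that $\psi$ is real differentiable on all of $U$; the only delicate region is $U\cap\R$, where $x\mapsto i_x$ is discontinuous, but there $i_x\,\beta(u,v)=\underline x\cdot\big(\beta(u,v)/|\underline x|\big)$ with $\beta$ odd in $v$ and $\beta(u,0)=0$, so this product extends smoothly across the real axis.

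The last step is an identity-principle argument. Both $\psi$ and $f$ are left slice hyperholomorphic on the axially symmetric s-domain $U$ and they agree on $U\cap\C_j$, hence in particular on $U\cap\R\ne\emptyset$. Fixing any $i\in\S$ and applying the Splitting Lemma~\ref{SplitLemM} to $g:=\psi-f$ on $\C_i$, write $g_i=\sum_{|A|=0}^{n-1}G_A\,i_A$ with $G_A:U\cap\C_i\to\C_i$ holomorphic; since $g$ vanishes on $U\cap\R$, comparing coefficients in a real-linear basis of $\R_n$ forces each $G_A$ to vanish on $U\cap\R$, and then the classical identity theorem (as $U\cap\C_i$ is a domain in which $U\cap\R$ has accumulation points) yields $G_A\equiv 0$, so $g\equiv 0$ on $U\cap\C_i$. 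As $i$ was arbitrary and $U=\bigcup_{i\in\S}(U\cap\C_i)$, we conclude $\psi\equiv f$ on $U$, which is the representation formula.

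The step I expect to be the main obstacle is the proof that $\psi\in\mathcal{SM}_L(U)$: the bookkeeping of the Cauchy--Riemann equations at the reflected point $u-jv$ has to be done carefully, and the real differentiability of $\psi$ across the real axis must be verified, since there $x\mapsto i_x$ jumps while the companion factor $\beta$ vanishes. Everything else is routine, and the right slice hyperholomorphic case requires only cosmetic changes.
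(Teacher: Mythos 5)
Your argument is correct: the paper states this Representation Formula without proof, referring to \cite{TREND}, and your proof is essentially the standard one from that source --- define the candidate $\psi(x)=\alpha(u,v)+i_x\,\beta(u,v)$ from the data on a single slice $\C_j$, verify through the Cauchy--Riemann equations for $f_j$ at the two points $u+jv$ and $u-jv$ that $\psi$ is left slice hyperholomorphic on every slice, check $\psi=f$ on $U\cap\C_j$, and conclude with the identity-principle argument, which you correctly rederive slice by slice via the Splitting Lemma~\ref{SplitLemM} and the vanishing of $\psi-f$ on $U\cap\R$. The computations ($\partial_u\alpha=\partial_v\beta$, $\partial_u\beta=-\partial_v\alpha$, the even/odd dependence of $\alpha,\beta$ on $v$, and the differentiability of $i_x\beta$ across the real axis) all check out, so nothing essential is missing.
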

The proof of the following Cauchy formula
is based on the Representation Formula.
\begin{theorem}[The Cauchy formula with slice hyperholomorphic kernel]
\label{Cauchygenerale}
Let $U\subset\mathbb{R}^{n+1}$ be an axially symmetric s-domain.
Suppose that $\partial (U\cap \mathbb{C}_i)$ is a finite union of
continuously differentiable Jordan curves  for every $i\in\mathbb{S}$ and set  $ds_i=-i\, ds $.
\\
If $f$ is
a (left) slice hyperholomorphic function on a set that contains $\overline{U}$ then
\begin{equation}\label{Cauchyleft}
 f(x)=\frac{1}{2 \pi}\int_{\partial (U\cap \mathbb{C}_i)} S_L^{-1}(s,x)\,ds_i\, f(s)
\end{equation}
where
\begin{equation}\label{SL1}
S_L^{-1}(s,x):=-(x^2 -2 \Re(s)x+|s|^2)^{-1}(x-\overline s),\ \ \ x\not\in[s].
\end{equation}
\\
If $f$ is a right slice hyperholomorphic function on a set that contains $\overline{U}$,
then
\begin{equation}\label{Cauchyright}
 f(x)=\frac{1}{2 \pi}\int_{\partial (U\cap \mathbb{C}_i)}  f(s)\,ds_i\, S_R^{-1}(s,x)
 \end{equation}
 where
 \begin{equation}\label{SR1}
S_R^{-1}(s,x):= -(x-\bar s)(x^2-2{\rm Re}(s)x+|s|^2)^{-1}, \ \ \ \ x\not\in[s].
\end{equation}
The
integrals  depend neither on $U$ nor on the imaginary unit
$i\in\mathbb{S}$.
\end{theorem}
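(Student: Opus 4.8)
The plan is to reduce the statement to the classical Cauchy integral formula on a single complex plane $\mathbb{C}_i$, the reduction being performed by means of the Splitting Lemma~\ref{SplitLemM} and the Representation Formula~\ref{formulamonM}. The starting observation is that the map $s\mapsto S_L^{-1}(s,x)$ is right slice hyperholomorphic on $\mathbb{R}^{n+1}\setminus[x]$ (this is checked by a direct computation on each plane $\mathbb{C}_i$, exactly as in the quaternionic case), while $f$ is left slice hyperholomorphic on an open set containing $\overline{U}$. Using this, I would first prove that the integral
$$
I_i(x):=\frac{1}{2\pi}\int_{\partial(U\cap\mathbb{C}_i)}S_L^{-1}(s,x)\,ds_i\,f(s)
$$
depends neither on the admissible domain $U$ (with $x\in U$) nor on $i\in\S$. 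Independence of $U$ is the easy half: for two such domains the kernel is slice hyperholomorphic in $s$ throughout the region lying between the two boundaries and away from $[x]$ (which is contained in both domains by axial symmetry), so Cauchy's integral theorem forces the two integrals to agree. Independence of $i$ is the delicate half and is precisely where the Representation Formula is needed: having reduced $U$, via the previous step, to a thin axially symmetric neighbourhood of $[x]$, one inserts the Representation Formula for $f$ into the integrand and uses Fubini together with the planar Cauchy theorem to transport the integral over $\partial(U\cap\mathbb{C}_i)$ to one over $\partial(U\cap\mathbb{C}_j)$ for a fixed reference unit $j$.

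Once $I_i(x)$ is known to be independent of $i$, I would take $i:=i_x$, so that $x\in U\cap\mathbb{C}_i$. Now for $s\in\mathbb{C}_i$ the quantity $x^2-2\Re(s)x+|s|^2$ factors in the commutative plane $\mathbb{C}_i$ as $(x-s)(x-\bar s)$ (because $x$ and $s$ lie in the same plane, so $2\Re(s)=s+\bar s$ and $|s|^2=s\bar s$ produce the ordinary complex factorization), and hence $S_L^{-1}(s,x)=(s-x)^{-1}$ for $s\in\mathbb{C}_i$. By the Splitting Lemma~\ref{SplitLemM} write $f_i(z)=\sum_{|A|=0}^{n-1}F_A(z)i_A$ with each $F_A\colon U\cap\mathbb{C}_i\to\mathbb{C}_i$ holomorphic. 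Since $ds_i=-i\,ds$ and $\frac{1}{2\pi}(-i)=\frac{1}{2\pi i}$ inside $\mathbb{C}_i$, one gets
$$
I_i(x)=\sum_{|A|=0}^{n-1}\left(\frac{1}{2\pi i}\int_{\partial(U\cap\mathbb{C}_i)}(s-x)^{-1}F_A(s)\,ds\right)i_A=\sum_{|A|=0}^{n-1}F_A(x)\,i_A=f_i(x)=f(x),
$$
by the classical Cauchy integral formula applied to each holomorphic $F_A$. This proves \eqref{Cauchyleft}. Formula \eqref{Cauchyright} then follows symmetrically: the same planar computation gives $S_R^{-1}(s,x)=(s-x)^{-1}$ for $s\in\mathbb{C}_{i_x}$, the independence of $U$ and of $i$ is obtained in the same way (now $s\mapsto S_R^{-1}(s,x)$ is left slice hyperholomorphic in $s$ and $f$ is right slice hyperholomorphic), and the right-handed splitting $g_i(z)=\sum_A i_A G_A(z)$ reduces the claim to the classical Cauchy formula for the components $G_A$.

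I expect the main obstacle to be the independence of $I_i(x)$ on the imaginary unit $i$. The kernel $S_L^{-1}(\,\cdot\,,x)$ is genuinely singular on the $(n-1)$-sphere $[x]$, and that sphere sits inside every admissible domain, so one cannot merely deform $\mathbb{C}_i$ to $\mathbb{C}_j$ without controlling the contribution coming from $[x]$. The Representation Formula is exactly the tool that identifies this contribution and shows it is the same (equal to $f(x)$) for all $i$; turning this into a rigorous argument — in particular handling the thin-neighbourhood limit and justifying the exchange of integrations — is the technical core. The remaining ingredients, namely the planar factorization of the Cauchy kernel, the bookkeeping with the Splitting Lemma, and the appeal to the classical Cauchy formula, are routine.
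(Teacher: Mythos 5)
Your planar computation and your independence-of-$U$ argument are fine: for $s,x$ in the same commutative plane one indeed has $x^2-2\Re(s)x+|s|^2=(x-s)(x-\bar s)$, hence $S_L^{-1}(s,x)=(s-x)^{-1}$ on $\mathbb{C}_{i_x}$ (with the apparent singularity at $\bar x$ removable), and the componentwise use of the classical Cauchy formula through the Splitting Lemma \ref{SplitLemM} is correct. The genuine gap is the step your architecture leans on most heavily, the independence of the integral on $i\in\mathbb{S}$, which you only sketch and defer as "the technical core". As described (insert the Representation Formula for $f(s)$ under the integral, express $f(u'\pm jv')$ by the planar Cauchy formula on $\mathbb{C}_j$, exchange integrals), the argument does not close: after Fubini the inner integral you must evaluate is $\frac{1}{2\pi}\int_{\partial(U\cap\mathbb{C}_i)}S_L^{-1}(s,x)\,ds_i\,S_L^{-1}(p,s)$, and identifying it with $S_L^{-1}(p,x)$ is itself an instance of the Cauchy formula being proved, applied on the plane $\mathbb{C}_i$ at a point $x\notin\mathbb{C}_i$ — precisely the case your computation at $i=i_x$ does not cover. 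So the route is either circular or leaves the crucial claim unproved; the analogous Fubini argument in Theorem \ref{theoindep} of the paper is legitimate only because there the scalar Cauchy theorem/formula is already available.

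The paper itself does not reproduce a proof (the result is recalled from \cite{MR2752913}, with the remark that the proof "is based on the Representation Formula"), and the intended argument removes your difficulty by applying the Representation Formula \emph{at the point $x$} rather than under the integral sign, thereby proving the formula for an arbitrary fixed $i$ directly. Concretely: write $x=u+i_xv$, set $z=u+iv,\ \bar z=u-iv\in U\cap\mathbb{C}_i$ (axial symmetry); your Splitting Lemma computation gives the planar formulas $f(z)=\frac{1}{2\pi}\int_{\partial(U\cap\mathbb{C}_i)}(s-z)^{-1}ds_i f(s)$ and similarly for $\bar z$; combining them by Theorem \ref{formulamonM} yields $f(x)=\frac{1}{2\pi}\int_{\partial(U\cap\mathbb{C}_i)}\bigl\{\tfrac12[(s-z)^{-1}+(s-\bar z)^{-1}]+\tfrac{i_xi}{2}[(s-\bar z)^{-1}-(s-z)^{-1}]\bigr\}ds_i f(s)$, and the bracket equals $S_L^{-1}(s,x)$ because $S_L^{-1}(s,\cdot)$ is left slice hyperholomorphic off $[s]$, restricts to $(s-\cdot)^{-1}$ on $\mathbb{C}_i$, and slice extensions are determined by the Representation Formula (note $x\notin[s]$ for $s\in\partial(U\cap\mathbb{C}_i)$ since $[x]\subset U$). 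This proves \eqref{Cauchyleft} for every $i$ simultaneously, so independence of $i$ and of $U$ come out as corollaries rather than prerequisites; the right slice hyperholomorphic case \eqref{Cauchyright} is symmetric. If you restructure your proof this way, the pieces you already have (Splitting Lemma, planar factorization, classical Cauchy) suffice and the delicate transport between planes disappears.
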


\subsection{Preliminaries on operator theory}

In the following we will assume that $V$ is a real Banach space.
We denote by  $V_n$  the two-sided Banach module  over $\mathbb{R}_n$ corresponding to $V\otimes \mathbb{R}_n$.
For $\mu=0,1,...,n$ let  $T_\mu :  \mathcal{D}(T_\mu) \to V$ be linear operators that do not necessarily commute among themselves, where $\mathcal{D}(T_\mu)$ denotes the domain of $T_\mu$ which is contained in $V$.
To define the $H^\infty$ functional calculus for the $(n+1)$-tuple of linear operators $(T_0,T_1,..,T_n)$,
we will consider the so-called operator in paravector form
$$
T=T_0+e_1T_1+\ldots +e_nT_n
$$
whose domain is $\mathcal{D}(T)=\bigcap_{\mu=0}^n \mathcal{D}(T_\mu)$.

The case of  $n$-tuples of operators is obviously contained in the previous case by setting $T_0=0$, namely when we take  $(0,T_1,..,T_n)$. In the following we will consider
$n+1$-tuples of operators, including operator $T_0$, because from the point of view of our theory there are no additional difficulties.

We denote by $\mathcal{B}(V)$ the space
of all bounded $\mathbb{R}$-homomorphisms from the Banach space $V$ into itself
 endowed with the natural norm denoted by $\|\cdot\|_{\mathcal{B}(V)}$.
 Let $T_A\in \mathcal{B}(V)$. We define the operator
$$T=\sum_A T_Ae_A$$
 and
its action on the generic element of $V_n$
$$
v=\sum_B v_Be_B
$$
as
$$T(v)=\sum_{A,B}
T_A(v_B)e_Ae_B.
$$
 The operator $T$ is a right-module homomorphism which is a bounded linear
map on $V_n$.
The set of all such bounded operators is denoted by $\mathcal{B}(V_n)$.
We define a norm in $\mathcal{B}(V_n)$ by setting
$$
\|T\|_{\mathcal{B}(V_n)}=\sum_A \|T_A\|_{\mathcal{B}(V)}.
$$
We denote by $\mathcal{K}(V_n)$ the set of those paravector operators $T$ that are linear and closed.
The notion of $S$-spectrum, $S$-resolvent set and of $S$-resolvent operator can be defied in the Clifford setting as follows.
\begin{definition}
The $S$-resolvent set $\rho_S(T)$ of $T$ is defined as
$$
\rho_S(T):=\{ s\in \mathbb{R}^{n+1}\  : \ Q_{s}(T)\in \mathcal{B}(V_n)\}
$$
where
\begin{equation}\label{defQmon}
Q_{s}(T):=(T^2-2 {\rm Re}(s) T +|s|^2\mathcal{I})^{-1},
\end{equation}
and
$$
T^2-2 {\rm Re}(s) T +|s|^2\mathcal{I}\, :\, \mathcal{D}(T^2)\to V_n.
$$
The $S$-spectrum $\sigma_S(T)$ of $T$ is defined by
$$
\sigma_S(T)= \mathbb{R}^{n+1}\setminus \rho_S(T).
$$
\end{definition}
In the case of $n$-tuples of operators when $T$ is bounded, i.e. all the components $T_\mu$ are bounded, then the $S$-spectrum is a nonempty compact set in $\mathbb{R}^{n+1}$. When at least one of the operators $T_\mu$ is unbounded then $\rho_S(T)$ can be every closed subset of $\mathbb{R}^{n+1}$. In this case, we will always assume that the set $\rho_S(T)$ is nonempty.
\begin{definition}
Let $T\in\closOP(V_n)$. The left $S$-resolvent operator is defined as
\begin{equation}\label{SresolvoperatorL}
S_L^{-1}(s,T):= Q_s(T)\overline{s} -TQ_s(T),\ \ \ \ s\in \rho_S(T)
\end{equation}
and the right $S$-resolvent operator is defined as
\begin{equation}\label{SresolvoperatorR}
S_R^{-1}(s,T):=-(T-\id \overline{s})Q_s(T),\ \ \ \ s\in \rho_S(T).
\end{equation}
\end{definition}

The  $S$-resolvent equation for the case of unbounded operators $T$ has been considered in \cite{CG} for the quaternionic operators but its proof can be easily adapted to the the case of $n$-tuples of operators.
Let $T\in\closOP(V_n)$. For  $s,p \in  \rho_S(T)$ with $s\notin[p]$, it is
\begin{equation}
\begin{split}
S_R^{-1}(s,T)S_L^{-1}(p,T)v=&\big[[S_R^{-1}(s,T)-S_L^{-1}(p,T)]p
\\
&
-\overline{s}[S_R^{-1}(s,T)-S_L^{-1}(p,T)]\big](p^2-2s_0p+|s|^2)^{-1}v, \ \ \ v\in V_n.
\end{split}
\end{equation}

\subsection{The rational functional calculus}
In the Clifford algebra setting, the definition of slice hyperholomorphic rational function is slightly more complicated than in the quaternionic case. To introduce it, we need some preliminary definitions and results.\\
We begin by defining the slice hyperholomorphic product,
 which is more involved than in the quaternionic case.
\\
\\
{\em The slice hyperholomorphic product}.
\\
For any $i\in\mathbb{S}$ set $i=i_1$ and
 consider a completion
 to a basis $\{i_1,\ldots, i_n\}$ of $\mathbb{R}_{n}$ such that $i_\ell i_{\ell'}+i_{\ell'}i_\ell=-2\delta_{\ell\ell'}$.
 The Splitting Lemma \ref{SplitLemM} guarantees the existence of holomorphic functions
 $F_A,G_A: \ U\cap \mathbb{C}_i\to \mathbb{C}_i$ such
 that for all $z=u+iv\in  U\cap \mathbb{C}_i$, the restriction to $\mathbb{C}_i$ of $f$ and $g$, denoted by $f_i$ and $g_i$ respectively, can be written as
 $$
 f_i(z)=\sum_A F_A(z)i_A, \qquad g_i(z)=\sum_B G_B(z)i_B,
 $$
 where $A,B$ are subsets of $\{2,\ldots ,n\}$ and, by definition,
 $i_\emptyset =1$.
We define the function $f_i*g_i:\ U\cap \mathbb{C}_i\to \mathbb{R}_n$  as
\begin{equation}\label{f*gmonM}
(f_i*g_i)(z)=\sum_{|A|{\rm even}}(-1)^{\frac{|A|}{2}}F_A(z)G_A(z)
+\sum_{|A|{\rm odd}}(-1)^{\frac{|A|+1}{2}}F_A(z)\overline{G_A(\bar
z)}
$$
$$
+\sum_{|A|{\rm even},B\not=A}F_A(z)G_B(z)i_Ai_B +\sum_{|A|{\rm
odd},B\not=A}F_A(z)\overline{G_B(\bar z)}i_Ai_B.
\end{equation}
Then  $(f_i*g_i)(z)$ is obviously a  holomorphic map on $\mathbb{C}_i$,
and hence its
unique slice hyperholomorphic extension to $U$, which can be constructed according to the
Representation Formula \ref{formulamonM},
is given by
$$
(f*g)(x):={\rm ext}(f_i*g_i)(x).
$$

 \begin{definition}
Let $U\subseteq\mathbb{R}^{n+1}$ be an axially symmetric s-domain and let
 $f,g:\ U\to\mathbb{R}_n$  be  left slice hyperholomorphic functions. The function
$$(f*g)(x)={\rm ext}(f_i*g_i)(x)$$ defined as the extension of (\ref{f*gmonM})
is called the s-monogenic product  of $f$ and $g$. This product is
called $*$-product of $f$ and $g$.
 \end{definition}
An analogous definition can be made for right slice hyperholomorphic functions. Since we will concentrate on
intrinsic rational functions in the sequel, which are both left and right slice hyperholomorphic we will limit ourselves to the left case.
\\
\\
{\em The $*$-inverse function}.
\\
 Let $U\subseteq\mathbb{R}^{n+1}$ be an axially symmetric s-domain and let
 $f:\ U\to\mathbb{R}_{n}$ be a left slice hyperholomorphic function.
Let us consider the restriction $f_i(z)$ of $f$ to the plane $\mathbb{C}_i$ and its  representation given by the Splitting Lemma
 \begin{equation}\label{monfi}
 f_i(z)=\sum_A F_A(z)i_A.
 \end{equation}
Let us define the function $f_i^c:\ U\cap \mathbb{C}_i\to \mathbb{C}_i$ as
\[
\begin{split}
f_i^c(z):&=\sum_A F_A^c(z)i_A
\\
&
=\sum_{|A|\equiv  0} \overline{F_A(\bar z)}i_A-\sum_{|A|\equiv 1} F_A(z)i_A -\sum_{|A|\equiv  2}  \overline{F_A(\bar z)}i_A+\sum_{|A|\equiv  3} F_A(z)i_A,
\end{split}
\]
where the equivalence $\equiv$ is intended as $\equiv ({\rm mod}
4)$, i.e. the congruence modulo $4$. Since  any function $F_A$ is obviously holomorphic
it can be uniquely extended to a slice hyperholomorphic function on $U$,
according to the Representation Formula. Thus we can give the following definition:
 \begin{definition}\label{f^cmon}
Let $U\subseteq\mathbb{R}^{n+1}$ be an axially symmetric s-domain and let
 $f:\ U\to\mathbb{R}_n$  be a left slice hyperholomorphic function. The function
$$
f^c(x)={\rm ext}(f_i^c)(x)
$$
is called the slice hyperholomorphic conjugate of $f$.
 \end{definition}

Using the notion of $*$-multiplication of slice hyperholomorphic functions,
it is possible to associate to any slice hyperholomorphic function $f$ its
{\em symmetrization}  denoted by $f^s$.
Let $U\subseteq\mathbb{R}^{n+1}$ be an axially symmetric s-domain, let
$f:\ U\to\mathbb{R}_{n}$ be a slice hyperholomorphic function, and let is restriction to $\mathbb C_i$ be as in \eqref{monfi}.
Here we will use the notation $[f_i]_0$ to denote the ``scalar" part of
the function $f_i$, i.e. the part whose coefficient in the
Splitting Lemma is $i_\emptyset=1$. We define
the function $f^s:\ U\cap \mathbb{C}_i\to \mathbb{C}_i$ as
\[
\begin{split}
f^s_i:&=[f_I*f^c_i]_0
\\
&
=\Big[(\sum_BF_B(z)I_B)(\sum_{|A|\equiv  0} \overline{F_A(\bar
z)}I_A -\sum_{|A|\equiv  1} F_A(z)I_A
-\sum_{|A|\equiv  2}  \overline{F_A(\bar z)}I_A+\sum_{|A|\equiv  3}
F_A(z)I_A)\Big]_0.
\end{split}
\]
The function $f_i^s$ is  holomorphic and hence its unique
slice hyperholomorphic extension to $U$ motivates the following definition:

\begin{definition}
Let $U\subseteq\mathbb{R}^{n+1}$ be an axially symmetric s-domain and let
 $f:\ U\to\mathbb{R}_{n}$  be a slice hyperholomorphic function. The function
$$
f^s(x)={\rm ext}(f_i^s)(x)
$$
defined by the extension of
$f^s_i=[f_i*f^c_i]_0$ from $U\cap \mathbb{C}_i$ to the whole $U$ is called the symmetrization  of $f$.
\end{definition}
The following lemma is important for the definition of the $*$-inverse.
\begin{lemma}
Let $U\subseteq\mathbb{R}^{n+1}$ be an axially symmetric s-domain and let
$f,g$ be left slice hyperholomorphic functions. Then
$$
f^sg=f^s*g=g*f^s.
$$
Moreover, if $Z_{f^s}$ is the zero set of $f^s$, then
$$
(f^s)^{-1}g=(f^s)^{-1}*g=g*(f^s)^{-1}\ \ {\it
on}\ U\setminus Z_{f^s}.
$$
\end{lemma}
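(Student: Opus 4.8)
The plan is to reduce the whole statement to a single structural fact: the symmetrization $f^s$ is an \emph{intrinsic} function, $f^s\in\intrin(U)$ (this is proved in \cite{MR2752913}). Once this is granted, all three products in the statement can be computed explicitly on a fixed slice and compared. First I would record that the pointwise product $f^sg$ is again left slice hyperholomorphic on $U$ (product of an intrinsic function with a function in $\mathcal{SM}_L(U)$, see \cite{MR2752913}), while $f^s*g$ and $g*f^s$ are left slice hyperholomorphic on $U$ by construction of the $*$-product (both factors are left slice hyperholomorphic, $f^s$ being even both left and right). By uniqueness of the slice hyperholomorphic extension provided by the Representation Formula \ref{formulamonM}, it then suffices to verify that the restrictions of $f^s*g$, $g*f^s$ and $f^sg$ to a single plane $\C_i$ all coincide.

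Next I would use that $f^s\in\intrin(U)$ forces its Splitting Lemma decomposition on $\C_i$ to collapse to the scalar term: $f^s_i(z)=H(z)$ with $H\colon U\cap\C_i\to\C_i$ holomorphic and all higher components $F_A$, $|A|\ge 1$, equal to zero. Moreover, $f^s$ being intrinsic it is real-valued on $U\cap\R$, and since $U$ is axially symmetric $U\cap\C_i$ is symmetric with respect to $\R$, so the Schwarz reflection principle gives $\overline{H(\bar z)}=H(z)$ on $U\cap\C_i$. Writing $g_i(z)=\sum_B G_B(z)i_B$ and substituting $F_A=H\,\delta_{A,\emptyset}$ into the definition \eqref{f*gmonM} of the $*$-product, every term carrying a nontrivial $F_A$ drops out and one gets immediately $f^s_i*g_i=H\sum_B G_Bi_B=f^s_ig_i$. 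For the opposite order I would put $f^s$ into the role of the second factor in \eqref{f*gmonM}; the surviving terms are $\sum_{|A|\ \mathrm{even}}F_A(z)H(z)i_A+\sum_{|A|\ \mathrm{odd}}F_A(z)\overline{H(\bar z)}i_A$, and it is exactly the reflection identity $\overline{H(\bar z)}=H(z)$ that collapses this to $\sum_A F_A(z)H(z)i_A=H(z)g_i(z)=f^s_ig_i(z)$, using that $H(z)$ and $F_A(z)$ lie in the commutative field $\C_i$. Hence $g_i*f^s_i=f^s_ig_i$ as well, and extending back to $U$ yields $f^sg=f^s*g=g*f^s$.

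For the second statement I would observe that on $U\setminus Z_{f^s}$ the pointwise reciprocal $(f^s)^{-1}$ is again a nowhere-vanishing intrinsic slice hyperholomorphic function (again by \cite{MR2752913}; on $\C_i$ it is simply $1/H$, holomorphic and $\C_i$-valued on $(U\cap\C_i)\setminus Z_H$, so its splitting is again purely scalar and satisfies the same reflection property). Applying the first part with $(f^s)^{-1}$ in place of $f^s$ then gives $(f^s)^{-1}g=(f^s)^{-1}*g=g*(f^s)^{-1}$ on $U\setminus Z_{f^s}$, which is the claim.

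I expect the genuinely delicate points to be the two places where intrinsicity is used, rather than any deep computation. The first is the assertion $f^s\in\intrin(U)$ itself: this is the one step where I would rely on \cite{MR2752913} instead of recomputing, since it is a non-trivial property of the symmetrization. The second is the asymmetry of formula \eqref{f*gmonM} in the two orders of multiplication — the odd-index terms carry a conjugation $\overline{G_B(\bar z)}$ — so the computation for $g*f^s$ closes up \emph{only} because the reflection property $\overline{H(\bar z)}=H(z)$ is available for the intrinsic factor; without intrinsicity one would not get $g*f^s=f^sg$. Everything else is routine bookkeeping with the Splitting Lemma \ref{SplitLemM}.
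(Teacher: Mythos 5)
Your proof is correct. Note that the paper itself gives no argument for this lemma: Section 7 explicitly recalls these facts from \cite{MR2752913} without proofs, so there is no in-paper proof to compare against; your route (reduce everything to intrinsicity of $f^s$, collapse the Splitting Lemma decomposition to a single $\C_i$-valued holomorphic component $H$ with $\overline{H(\bar z)}=H(z)$, evaluate \eqref{f*gmonM} term by term in both orders, and conclude on $U$ by uniqueness of the slice hyperholomorphic extension) is exactly the standard one, and your identification of where intrinsicity is genuinely needed — the odd-$|A|$ terms of \eqref{f*gmonM} carry the conjugation $\overline{H(\bar z)}$, which only closes up because of the reflection identity, together with commutativity inside $\C_i$ — is on target. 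Two small remarks. First, you need not quote $f^s\in\intrin(U)$ as a black box: the diagonal ($A=B$) terms of \eqref{f*gmonM} applied to $f_i$ and $f_i^c$ give $f^s_i(z)=\sum_A F_A(z)\overline{F_A(\bar z)}$, from which $f^s_i(\bar z)=\overline{f^s_i(z)}$, i.e.\ the reflection property and hence intrinsicity, is immediate. Second, in the last step $U\setminus Z_{f^s}$ need not be an s-domain (it is axially symmetric, since the zeros of an intrinsic function are real points and spheres, but it may be disconnected or miss the real axis), so one should not literally rerun the Schwarz-reflection argument or the Representation Formula there; the fix is the one you in fact indicate, namely that $1/H$ inherits the identity $\overline{(1/H)(\bar z)}=1/H(z)$ algebraically from $H$, and since the slice-wise computation is valid on $(U\cap\C_i)\setminus Z_{f^s_i}$ for every $i\in\S$ and the $*$-product is computed slice-wise independently of the chosen slice, the three functions agree on all of $U\setminus Z_{f^s}$. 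Neither point affects the correctness of your argument.
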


\begin{definition}\label{inverse_mon}
Let $U\subseteq \mathbb{R}^{n+1}$ be an axially symmetric s-domain.  Let $f:
U\to\mathbb{R}_n$ be a left slice hyperholomorphic function such that for some
$i\in\mathbb{S}$ its restriction $f_i$ to the complex plane $\mathbb{C}_i$ satisfies the condition
\begin{equation}\label{propertyi}
f_i*f_i^c\ \ {\it has\ values\ in\ }\mathbb{C}_i.
\end{equation}
We define the function:
$$
f^{-*}:={\rm ext}((f^s_i)^{-1}f^c_i)
$$
where $f_i^s=[f_i*f_i^c]_0=f_i*f_i^c$, and we will call it slice hyperholomorphic inverse of the function $f$.
\end{definition}
The next proposition shows that the function  $f^{-*}$ is the  inverse of $f$ with respect to the $*$-product:
\begin{lemma}
Let $U\subseteq \mathbb{R}^{n+1}$ be an axially symmetric s-domain. Let $f:
U\to\mathbb{R}_n$ be an s-monogenic function such that for some
$i\in\mathbb{S}$ we have that
$f_i*f_i^c$ has values in $\mathbb{C}_i.$
Then on $U\setminus Z_{f^s}$ we have:
$$f^{-*}*f=f*f^{-*}=1.
$$
\end{lemma}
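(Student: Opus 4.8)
The plan is to reduce the two identities $f^{-*}*f = f*f^{-*} = 1$ to a pair of algebraic facts about the $*$-product on the axially symmetric s-domain $U$: first, that the symmetrization $f^s$ is intrinsic and hence, by the Lemma preceding Definition~\ref{inverse_mon}, can be pulled freely through any $*$-product; and second, that under the standing hypothesis one has \emph{both} $f*f^c = f^s$ and $f^c*f = f^s$. Once these are established the conclusion is a purely formal manipulation.

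First I would record what the hypothesis buys. By the definition of $\ext$ and of the symmetrization, together with the observation made in Definition~\ref{inverse_mon} that the condition ``$f_i*f_i^c$ has values in $\C_i$'' forces $f_i^s = [f_i*f_i^c]_0 = f_i*f_i^c$ (the non-scalar components of $f_i*f_i^c$ being zero), one obtains after extension the global identity $f*f^c = f^s$ on $U$. Moreover $f^s$ is intrinsic: restricted to $\C_i$ it equals $\sum_A F_A(z)\overline{F_A(\bar z)}$, which satisfies $f_i^s(\bar z) = \overline{f_i^s(z)}$. Hence, by the quoted Lemma, $(f^s)^{-1}*g = g*(f^s)^{-1} = (f^s)^{-1}g$ for every left slice hyperholomorphic $g$ on $U\setminus Z_{f^s}$; in particular $f^{-*} = (f^s)^{-1}*f^c = f^c*(f^s)^{-1}$ there.

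The only genuinely non-formal point is the claim $f^c*f = f^s$. Its scalar part is immediate: a direct computation from the Splitting-Lemma formula for $*$ and the definition of the conjugation $g\mapsto g^c$ gives $[f_i^c*f_i]_0 = \sum_A F_A(z)\overline{F_A(\bar z)} = f_i^s$, using only the commutativity of $\C_i$. To kill the non-scalar part I would set $M := f^c*f$ and use associativity of $*$ together with the intrinsicality of $f^s$ to write
\[
f*M = (f*f^c)*f = f^s*f = f*f^s ,
\]
so that $f*(M-f^s) = 0$. Since $f\not\equiv 0$ (if $f\equiv 0$ then $Z_{f^s} = U$ and there is nothing to prove) and, as is standard, the $*$-product of slice hyperholomorphic functions on the connected set $U$ has no zero divisors, it follows that $M = f^s$, i.e.\ $f^c*f = f^s$. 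I expect this to be the delicate step; the alternative — expanding $f_i^c*f_i$ term by term with the combinatorial $*$-formula and checking that the hypothesis on $f_i*f_i^c$ annihilates the non-scalar terms — carries the same difficulty in heavier bookkeeping.

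With the two identities $f*f^c = f^c*f = f^s$ and the mobility of $(f^s)^{-1}$ in hand, the proof finishes on $U\setminus Z_{f^s}$ by
\[
f*f^{-*} = f*\bigl(f^c*(f^s)^{-1}\bigr) = (f*f^c)*(f^s)^{-1} = f^s*(f^s)^{-1} = 1
\]
and
\[
f^{-*}*f = \bigl((f^s)^{-1}*f^c\bigr)*f = (f^s)^{-1}*(f^c*f) = (f^s)^{-1}*f^s = 1 ,
\]
where each step is either associativity of $*$, one of the two identities, the freedom to commute $(f^s)^{-1}$ past a $*$-factor, or $(f^s)^{-1}f^s = 1$.
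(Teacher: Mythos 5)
Your overall reduction is sensible: the hypothesis does give $f*f^c=f^s$ on $U$, your computation $[f_i^c*f_i]_0=\sum_A F_A(z)\overline{F_A(\bar z)}=f_i^s$ is correct, and once the two identities $f*f^c=f^c*f=f^s$ are available, the final manipulations via associativity and the preceding lemma on $(f^s)^{-1}$ are fine. (Note the paper itself gives no proof of this lemma; Section 7 quotes it from \cite{MR2752913}, so I can only judge correctness, not compare routes.) The genuine gap is exactly at the step you flag as delicate: your cancellation $f*(M-f^s)=0\Rightarrow M=f^s$ rests on the claim that the $*$-product of slice hyperholomorphic $\mathbb{R}_n$-valued functions on a connected set has no zero divisors. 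That is false in the Clifford setting for $n\geq 3$: the constants $f=1+e_1e_2e_3$ and $g=1-e_1e_2e_3$ are slice monogenic, the $*$-product of constants is the Clifford product, and since $(e_1e_2e_3)^2=1$ one gets $f*g=0$ with $f,g\neq 0$. (The same example shows that the quaternionic identity $(f*g)^s=f^sg^s$ also fails here, so that rescue is unavailable; this zero-divisor phenomenon is precisely why the lemma carries the extra hypothesis, which is vacuous for $\mathbb{H}$.) Nor can you cancel $f$ on the left by exhibiting a left $*$-inverse: the hypothesis only gives the right inverse $f^c*(f^s)^{-1}$, and a left inverse at this stage would require $f^c*f=f^s$, the very identity you are trying to prove, so the argument is circular.

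To close the gap one genuinely has to prove $f^c*f=f^s$ on $U\setminus Z_{f^s}$ by another route. One clean way: pass to the description of slice hyperholomorphic functions on axially symmetric domains by holomorphic stem functions with values in the complexified Clifford algebra, where the $*$-product becomes the pointwise product in a finite-dimensional algebra. At each point where $f^s\neq 0$, the relation $f\cdot f^c=f^s$ shows $f$ has a right inverse; in a finite-dimensional algebra one-sided invertibility is two-sided, so $f$ is invertible at that point, and then $f^c\cdot f=f^{-1}\cdot(f\cdot f^c)\cdot f=f^{-1}\cdot f^s\cdot f=f^s$ because the $\mathbb{C}$-scalar $f^s$ is central; the identity then extends by the identity principle on each slice. (Alternatively, a direct computation as in \cite{MR2752913}.) With that step supplied, the rest of your argument goes through; without it, the proof is incomplete at its central point.
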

\begin{remark}{\rm Note that the $*$-inverse of a slice hyperholomorphic function $f$ is defined under the additional assumption that that for some
$i\in\mathbb{S}$ we have that
$f_i*f_i^c$ has values in $\mathbb{C}_i$.
This assumption is automatically satisfied, for all $i\in\mathbb S$, by the intrinsic functions.}
\end{remark}
{\em The rational functional calculus}.
\\
Consider a left slice hyperholomorphic  polynomial
$$
\mathcal{P}(x)=\sum_{\ell=0}^m x^\ell a_\ell,\ \ \ {\rm where }\ \ a_\ell\in \mathbb{R}_n
$$
in the paravector variable $x$. The natural functional calculus
is obtained by replacing the paravector operator $x$ by the paravector operator $T=T_0+T_1e_1+\ldots +T_ne_n$:
$$
\mathcal{P}(T)=\sum_{\ell=0}^m T^\ell a_\ell,\ \ \ {\rm where }\ \ a_\ell\in \mathbb{R}_n
$$
whose domain is $\mathcal{D}(\mathcal{P}(T))=D(T^m)$.
\\
\\
Let $\mathcal{Q}(s)$ be a polynomial in the paravector variable $s$ satisfying the condition \eqref{propertyi}, i.e.
$\mathcal{Q}_i*\mathcal{Q}_i^c$ has values in $\mathbb{C}_i$ for every  $i\in\mathbb{S}$.
Then  $\mathcal{Q}^{-*}(s)$ is a rational function.
If we use the $*$-multiplication and if $\mathcal{P}(s)$ is a polynomial then rational functions are of the form
$$
\mathcal{R}(s)=\mathcal{P}(s)* \mathcal{Q}^{-*}(s)
$$
or
$$
\mathcal{R}(s)=\mathcal{Q}^{-*}(s)*\mathcal{P}(s).
$$
In the sequel we will be interested in the functional calculus for intrinsic rational functions, so
$$
\mathcal{P}(s)* \mathcal{Q}^{-*}(s)=\mathcal{Q}^{-*}(s)*\mathcal{P}(s)=\dfrac{\mathcal{P}(s)}{\mathcal{Q}(s)}.
$$
Let $\mathcal{R}$ be a rational function  and assume
that $\mathcal{R}$ has no poles on the S-spectrum of $T$; suppose that $T$ is a closed densely defined paravector operator and define
$$
\mathcal{R}(T)=\mathcal{P}(T)* \mathcal{Q}^{-*}(T).
$$
This operator is also closed and densely defined and its domain is $\mathcal{D}(T^m)$ where
$$
m:=\max\{0, \deg P-\deg Q\}.
$$
We point out the Propositions \ref{RATUNO} and \ref{propratio} holds also in this setting and we do not repeat them.

\subsection{The $S$-functional calculus for $n$-tuples of operators of type $\omega$}

The $S$-resolvent operators, on which the $S$-functional calculus for $n$-tuples of operators is based, are
defined as follows:
for  $T\in\closOP(V)$ the left $S$-resolvent operator is
\begin{equation}\label{SresolvoperatorL}
S_L^{-1}(s,T):= Q_s(T)\overline{s} -TQ_s(T),\ \ \ \ s\in\rho_S(T),
\end{equation}
and the right $S$-resolvent operator is
\begin{equation}\label{SresolvoperatorR}
S_R^{-1}(s,T):=-(T-\id \overline{s})Q_s(T),\ \ \ \ s\in\rho_S(T),
\end{equation}
where
$$
Q_s(T):=(T^2 - 2\Re(s)T + |s|^2\id)^{-1}.
$$

The argument function for $p\in\mathbb{R}^{n+1}\setminus\{0\}$ is denoted by $\arg(p)$ and it is
 the unique number $\theta \in [0,\pi]$ such that $p = |p| e^{\theta i_p}$.
Observe that $\theta = \arg(s)$ does not depend on the choice of $i_s$ if $s\in\rr\setminus\{0\}$ since $p = |p|e^{0 i}$ for any $i\in\S$ if $p>0$ and $p = |p|e^{\pi i}$ for any $i\in\S$ if $p<0$.

Let $\vartheta \in [0, \pi]$ and us define the sets
$$
\mathcal{S}_\vartheta=\{ s\in \mathbb{R}^{n+1}\ :\ s=0\ {\rm or} \ |\arg(p)|\leq \vartheta\}
$$
$$
\mathcal{S}^0_\vartheta=\{ s\in \mathbb{R}^{n+1}\ :\ \ |\arg(p)|< \vartheta\}.
$$

\begin{definition}[Paravector operators of type $\omega$]
Let $\omega\in [0,\pi)$ we say that the paravector operator $T:\mathcal{D}(T)\subseteq V_n\to V_n$, where $T=T_0+T_1e_1+\ldots +T_ne_n$,  is of type $\omega$ if
\begin{itemize}
\item[(i)]
$T$ is closed and densely defined,
\item[(ii)]
$\sigma_S(T)\subset \mathcal{S}_\vartheta\cup\{\infty\}$,
\item[(iii)]
for every $\vartheta\in (\omega,\pi]$ there exists a positive constant $C_\vartheta$ such that
$$
\|S_L^{-1}(s,T)\|\leq \frac{C_\vartheta}{|s|}\ \ {\it for  \ all  \ non\ zero}\  s\in\mathcal{S}^0_\vartheta,
$$
$$
\|S_R^{-1}(s,T)\|\leq \frac{C_\vartheta}{|s|}\ \ {\it for  \ all  \ non\ zero}\  s\in\mathcal{S}^0_\vartheta.
$$
\end{itemize}
\end{definition}
We now define suitable subsets of the set of slice hyperholomorphic functions.
Let $\mu\in (0,\pi]$. We set
$$
\mathcal{S\!M}_L^\infty(\mathcal{S}^0_\mu)=\{f\in \mathcal{S\!M}_L(\mathcal{S}^0_\mu)\ \ \ {\it such \ that}\ \ \|f\|_\infty:=\sup_{s\in\mathcal{S}^0_\mu}|f(s)|<\infty \},
$$
$$
\mathcal{S\!M}_R^\infty(\mathcal{S}^0_\mu)=\{f\in \mathcal{S\!M}_R(\mathcal{S}^0_\mu)\ \ \ {\it such \ that}\ \ \|f\|_\infty:=\sup_{s\in\mathcal{S}^0_\mu}|f(s)|<\infty \},
$$
$$
\mathcal{N}^\infty(\mathcal{S}^0_\mu):=\{f\in \mathcal{N}(\mathcal{S}^0_\mu)\ \ \ {\it such \ that}\ \ \|f\|_\infty:=\sup_{s\in\mathcal{S}^0_\mu}|f(s)|<\infty\}.
$$
 To define the $S$-functional calculus for paravector operators of type $\omega$ we need the following definition.
\begin{definition} Let $\mu\in (0,\pi]$, we set
$$
\Psi_L(\mathcal{S}^0_\mu)=\{f\in \mathcal{S\!M}_L^\infty(\mathcal{S}^0_\mu)\ {\it such \ that} \ \exists \ \alpha>0,\ c>0\ \
|f(s)|\leq \frac{c|s|^\alpha}{1+|s|^{2\alpha}},\ \ {\it for \ all}\ s\in \mathcal{S}^0_\mu\},
$$
$$
\Psi_R(\mathcal{S}^0_\mu)=\{f\in \mathcal{S\!M}_R^\infty(\mathcal{S}^0_\mu)\ {\it such \ that} \ \exists \ \alpha>0,\ c>0\ \
|f(s)|\leq \frac{c|s|^\alpha}{1+|s|^{2\alpha}},\ \ {\it for \ all}\ s\in \mathcal{S}^0_\mu\},
$$
$$
\Psi(\mathcal{S}^0_\mu)=\{f\in \mathcal{N}^\infty(\mathcal{S}^0_\mu)\ {\it such \ that} \ \exists \ \alpha>0,\ c>0\ \
|f(s)|\leq \frac{c|s|^\alpha}{1+|s|^{2\alpha}},\ \ {\it for \ all}\ s\in \mathcal{S}^0_\mu\}.
$$
\end{definition}

The analogue of Theorem \ref{theoindep} in  the Clifford algebra setting allows to define the $S$-functional calculus for paravector operators of type $\omega$. Thus, with the identification of the $(n+1)$-tuple $(T_0,T_1,...,T_n)$ of linear operators with the paravector operator  $T=T_0+T_1e_1+\ldots +T_ne_n$, we obtain the $S$-functional calculus for $n+1$-tuples of operators and, as we discussed, also for $(0,T_1,...,T_n)$.
\begin{definition}[The $S$-functional calculus for $n$-tuples of operators of type $\omega$]\label{psiT}
Let $i\in\S$, and let $\mathcal{S}^0_\mu$ be the sector defined above. Choose a piecewise smooth path $\Gamma$ in $\mathcal{S}^0_\mu\cap\C_i$ that goes from $\infty e^{i\theta}$ to $\infty e^{-i\theta}$, for $\omega <\theta <\mu$,  then
\begin{equation}\label{PsiTLmon}
\psi(T):= \frac{1}{2\pi}\int_{\Gamma}S_L^{-1}(s,T) \, ds_i\, \psi(s),\ \ \ \ {\it for\ all }\ \ \ \psi\in \Psi_L(\mathcal{S}^0_\mu),
\end{equation}\begin{equation}\label{PsiTRmon}
\psi(T):= \frac{1}{2\pi}\int_{\Gamma} \psi(s)\, ds_i\, S_R^{-1}(s,T),\ \ \ \ {\it for\ all }\ \ \ \psi\in \Psi_R(\mathcal{S}^0_\mu).
\end{equation}
\end{definition}
The definition is well posed since the two integrals above do not depend neither on $\Gamma$ nor on $i\in \mathbb{S}$.
\\
If $\psi$  belongs to $\Psi(\mathcal{S}^0_\mu)$ both the representations \eqref{PsiTLmon} and \eqref{PsiTRmon} can be used and
$$
\psi(T):= \frac{1}{2\pi}\int_{\Gamma} \psi(s)\, ds_i\, S_R^{-1}(s,T)=\frac{1}{2\pi}\int_{\Gamma}S_L^{-1}(s,T) \, ds_i\, \psi(s),
\ \ \ \ {\rm for\ all }\ \ \ \psi\in \Psi(\mathcal{S}^0_\mu).
$$
As in the quaternionic case, the $S$-functional calculus satisfies the following properties.
\begin{theorem}
The operators $\psi(T)$ defined in (\ref{PsiTLmon}) and (\ref{PsiTRmon}) are bounded linear operators:
 $$(a\psi+b\varphi)(T)=a\psi(T)+b\varphi(T),\ \ \ \ {\it for \ all}\ \ \ \psi,\varphi\in \Psi_L(\mathcal{S}^0_\mu),$$
$$(a\psi+b\varphi)(T)=a\psi(T)+b\varphi(T),\ \ \ \ {\it for \ all}\ \ \ \psi,\varphi\in \Psi_R(\mathcal{S}^0_\mu).$$
and moreover
$$ (\psi\varphi)(T)=\psi(T)\varphi(T),\ \ \ \ {\it for \ all}\ \ \ \psi\in \Psi(\mathcal{S}^0_\mu),\ \ \ \varphi\in \Psi_L(\mathcal{S}^0_\mu),
$$
$$ (\psi\varphi)(T)=\psi(T)\varphi(T),\ \ \ \ {\it for \ all}\ \ \ \psi\in \Psi_R(\mathcal{S}^0_\mu),\ \ \ \varphi\in \Psi(\mathcal{S}^0_\mu).$$
\end{theorem}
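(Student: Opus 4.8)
The plan is to mirror, line by line, the arguments already given in the quaternionic case (Theorems~\ref{theoindep} and~\ref{PRDRULE}), replacing the quaternionic slice hyperholomorphic tools by their Clifford algebra counterparts recalled in this section. Nothing essentially new happens; the content of the proof is to check that each ingredient transfers.

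First I would show that the integrals in \eqref{PsiTLmon} and \eqref{PsiTRmon} converge and define elements of $\mathcal{B}(V_n)$, which is the Clifford analogue of Theorem~\ref{theoindep}. Indeed, for a type-$\omega$ paravector operator the bound $\|S_L^{-1}(s,T)\|\le C_\vartheta/|s|$ on $\mathcal{S}^0_\vartheta$ together with $|\psi(s)|\le c|s|^\alpha/(1+|s|^{2\alpha})$ for $\psi\in\Psi_L(\mathcal{S}^0_\mu)$ makes the integrand along $\Gamma$ absolutely integrable both near $0$ and near $\infty$, so the integral defines a bounded right-module homomorphism on $V_n$. Independence of the contour $\Gamma$ and of the angle $\theta$ inside $\mathcal{S}^0_\mu\cap\C_i$ follows from Cauchy's integral theorem for slice monogenic functions, since $s\mapsto S_L^{-1}(s,T)$ is right slice monogenic on $\rho_S(T)$; independence of $i\in\S$ is obtained exactly as in the quaternionic proof, by rewriting $\psi(s)$ through its Clifford Cauchy representation \eqref{Cauchyright} over the boundary of a slightly larger sector in the plane $\C_j$, applying Fubini's theorem (legitimate thanks to the decay just recalled), exchanging the order of integration, and collapsing the inner integral using the $S$-functional calculus together with $S_R^{-1}(p,\infty)=\lim_{s\to\infty}S_R^{-1}(p,s)=0$. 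Linearity, $(a\psi+b\varphi)(T)=a\psi(T)+b\varphi(T)$, is then immediate from linearity of the integral and of the $S$-resolvent operators.

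For the product rule I would follow the proof of Theorem~\ref{PRDRULE} in the Clifford setting. Pick open sectors $U_1,U_2$ with $\sigma_S(T)\subset U_1$, $\overline{U_1}\subset U_2$, $\overline{U_2}\subset\mathcal{S}^0_\mu$, chosen to be axially symmetric $s$-domains with admissible boundaries in $\C_i$, and write $\psi(T)\varphi(T)$ as the iterated integral of $\psi(s)\,ds_i\,S_R^{-1}(s,T)S_L^{-1}(p,T)\,dp_i\,\varphi(p)$ over $\partial(U_2\cap\C_i)\times\partial(U_1\cap\C_i)$. Inserting the $S$-resolvent equation recalled in this section splits the integrand into four pieces; for fixed $s$, the two pieces containing $S_R^{-1}(s,T)$ have $p$-integrand $p\mapsto p(p^2-2s_0p+|s|^2)^{-1}\varphi(p)$, respectively $p\mapsto (p^2-2s_0p+|s|^2)^{-1}\varphi(p)$, which are slice monogenic on a neighbourhood of $\overline{U_1\cap\C_i}$, so Cauchy's integral theorem annihilates them. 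The two remaining pieces combine into
\[
\frac{1}{(2\pi)^2}\int_{\partial(U_2\cap\C_i)}\psi(s)\,ds_i\int_{\partial(U_1\cap\C_i)}\bigl[\overline{s}\,S_L^{-1}(p,T)-S_L^{-1}(p,T)p\bigr](p^2-2s_0p+|s|^2)^{-1}\,dp_i\,\varphi(p),
\]
and applying the Clifford analogue of Lemma~\ref{Lemma321} (with $B=S_L^{-1}(p,T)$ and the intrinsic function $\psi$) to the $s$-integral yields $\frac{1}{2\pi}\int_{\partial(U_1\cap\C_i)}S_L^{-1}(p,T)\,dp_i\,\psi(p)\varphi(p)$. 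Since $\psi$ is intrinsic, $\psi\varphi$ is again left slice monogenic and agrees with the $\star$-product, so this last integral is precisely $(\psi\varphi)(T)$. The case $\psi\in\Psi_R(\mathcal{S}^0_\mu)$, $\varphi\in\Psi(\mathcal{S}^0_\mu)$ is symmetric, using the right $S$-resolvent representation, and when all functions are intrinsic both representations coincide.

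I expect the only real work (and the main, though minor, obstacle) to be the bookkeeping needed to transfer the supporting results to paravector operators: the Clifford Cauchy formula (Theorem~\ref{Cauchygenerale}), Cauchy's integral theorem, the $S$-resolvent equation and the Clifford version of Lemma~\ref{Lemma321}, plus the justification of Fubini's theorem and of the limit $s\to\infty$ in the proof of independence of $i$. All of these are either stated in this section or are straightforward transcriptions of the quaternionic arguments, so no obstruction beyond careful handling of contours and domains is anticipated.
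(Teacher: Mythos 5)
Your proposal is correct and follows exactly the route the paper intends: the paper states this theorem without proof, appealing to the quaternionic case, and your argument is precisely the transcription of Theorem~\ref{theoindep} (convergence, contour independence, independence of $i\in\S$ via the Cauchy formula, Fubini and $S_R^{-1}(p,\infty)=0$) and of Theorem~\ref{PRDRULE} (nested sectors, $S$-resolvent equation, Cauchy's integral theorem killing the two $S_R^{-1}(s,T)$ terms, and the Clifford analogue of Lemma~\ref{Lemma321}) to paravector operators. No gaps beyond the bookkeeping you already identify, which is the same level of detail the paper itself supplies.
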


\subsection{The $H^\infty$ functional calculus}
We are now in the position to state the  $H^\infty$ functional calculus for $n+1$-tuples of operators:
\begin{definition}
Let $\omega$ and $\mu$ be two real numbers such that $0\leq \omega <\mu\leq \pi$ and we make the following assumptions on the linear operator $T$
\begin{itemize}
\item[(i)] $T$ is an operator of type $\omega$;
\item[(ii)] $T$ is one-to-one and with dense range.
\end{itemize}
We define
$$
\mathcal{F\!M}_L(\mathcal{S}^0_\mu)=\{f\in \mathcal{S\!M}_L(\mathcal{S}^0_\mu)\ \ \ {\rm such \ that}\
|f(s)|\leq C(|s|^k+|s|^{-k})\ \ {\rm for\ some}\ \ k >0\  {\rm and}\  C>0 \},
$$
$$
\mathcal{F\!M}_R(\mathcal{S}^0_\mu)=\{f\in \mathcal{S\!M}_R(\mathcal{S}^0_\mu)\ \ \ {\rm such \ that}\
|f(s)|\leq C(|s|^k+|s|^{-k})\ \ {\rm for\ some}\ \ k>0\  {\rm and}\  C>0 \}.
$$
$$
\mathcal{F\!M}(\mathcal{S}^0_\mu)=\{f\in \mathcal{N\!M}(\mathcal{S}^0_\mu)\ \ \ {\rm such \ that}\
|f(s)|\leq C(|s|^k+|s|^{-k})\ \ {\rm for\ some}\ \ k>0\  {\rm and}\  C>0 \}.
$$
\end{definition}
\begin{definition}
For $k>0$ let us set
$$
\psi(s):=\Big(\frac{s}{1+s^2}\Big)^{k+1}.
$$
For $f\in \mathcal{F\!M}_L(\mathcal{S}^0_\mu)$  we define the functional calculus
$$
f(T):=(\psi(T))^{-1}(\psi f)(T),
$$
where the operator $(f\psi)(T)$ is defined using the $S$-functional calculus, and $\psi(T)$ is defined by the rational functional calculus.
\end{definition}
The independence of the definition from the regularizing function $\psi$ and the product rule holds also here.

\begin{remark}
{\rm We point out that in this case there is no spectral mapping theorem available and the operator  $f(T)=(\psi(T))^{-1}(f\psi)(T)$ can be unbounded also when $f$ is bounded.}
\end{remark}
The analogue of the convergence theorem \ref{convth} holds in the Clifford algebra case, and it can be considered the paravector case version of the theorem in Section 5 in \cite{McI1}.
\\
\\
\begin{remark}{\rm
We conclude by pointing out that some
examples of operators to which the $S$-functional and the $H^\infty$-functional calculi apply are the Dirac operator
 $$
 D=e_1\frac{\partial }{\partial x_1}+\ldots+e_n\frac{\partial }{\partial x_n}
 $$
 and the global operator that annihilates slice hyperholomorphic functions, see \cite{Global}:
 $$
G(x)=|\underline{x}|^2\frac{\partial }{\partial x_0}  \ +  \  \underline{x}  \  \sum_{j=1}^n  x_j\frac{\partial }{\partial x_j}.
$$
}
\end{remark}
\begin{remark}
{\rm
In a Hilbert space a quadratic estimate is essentially contained in the Plancherel theorem.  On Lipschitz perturbations of the classical spaces, including the real line and the Euclidean spaces, there are no Plancherel theorems, and the boundedness is technically difficult.
In later contexts the quadratic estimates are consequences of the Coifman-McIntosh-Meyer (CMcM) Theorem. In fact, people found direct proofs, not via quadratic estimates.

The papers \cite{2LiMISe} and \cite{3} are equivalent, providing two simplest, independent and direct proofs  of the boundedness of the $H^\infty$ functional calculus of the Clifford Dirac differential operators $\underline{D}=D_1e_1+\cdots +D_ne_n$ and the non-homogeneous
$D=D_0+\underline{D},$ where $D_k, k=0,1,...,n$ is the partial differential operator with respect to the $k$-th variable.

The paper \cite{4} deals with the equivalence relationship between the three forms of the functional calculus, viz., the Cauchy-Dunford form, the Fourier multiplier form under the Fourier transformation theory on Lipschitz curves and surfaces of Coifman and Meyer, and the monogenic singular integral operator form.  Indeed, the operators in the functional calculi all form bounded operator algebras. For Lipschitz perturbations of the spheres in the complex plane, in the quaternionic space, in the Euclidean $n$-space, as well as perturbation of the $n$-complex sphere, and of the $n$-torus, analogous
$H^\infty$  functional calculi were established. They all correspond to the associated spherical Dirac differential operators,  see e.g. \cite{4} and \cite{5} and the references therein.

}
\end{remark}

\vskip 1cm

\end{document}